\definecolor{seaborngreen}{rgb}{0.3333333333333333, 0.6588235294117647, 0.40784313725490196}
\title{Geometry, Energy and Sensitivity in Stochastic Proton Dynamics}
\author[1]{Veronika Chronholm}
\thanks{Corresponding author: Veronika Chronholm}
\email[1]{mvc34@bath.ac.uk}
\author[2]{Tristan Pryer}
\address[1,2]{Institute for Mathematical Innovation\\ University of
  Bath, Bath, UK.}
\address[1,2]{Department of Mathematical Sciences\\ University of Bath, Bath, UK.}
\begin{document}

\begin{abstract}
  We develop numerical schemes and sensitivity methods for stochastic
  models of proton transport that couple energy loss, range straggling
  and angular diffusion. For the energy equation we introduce a
  logarithmic Milstein scheme that guarantees positivity and achieves
  strong order one convergence. For the angular dynamics we construct
  a Lie-group integrator. The combined method maintains the natural
  geometric invariants of the system.

  We formulate dose deposition as a regularised path-dependent
  functional, obtaining a pathwise sensitivity estimator that is
  consistent and implementable. Numerical experiments confirm that the
  proposed schemes achieve the expected convergence rates and provide
  stable estimates of dose sensitivities.
\end{abstract}

\maketitle

\textbf{Keywords:} stochastic differential equations, path sensitivities, proton transport, Monte Carlo.

\section{Introduction}

Stochastic models of proton transport play an important role in
high-precision applications such as proton therapy, where accurate
characterisation of energy loss and angular scattering is essential to
treatment planning and dose delivery. Deterministic formulations,
based on the continuous slowing down approximation or transport
equations, capture average behaviour and admit analytic
approximations, but they neglect statistical fluctuations
\cite{burlacu2023deterministic,ashby2025positivity}. Stochastic formulations, by contrast,
describe individual particle paths, accounting for both deterministic
energy loss due to inelastic interactions and random angular
deflections from Coulomb scatter, with additional variability
introduced by range straggling
\cite{crossley2024jump,kyprianou2025unified}. Analytic models of
either type provide reduced-order descriptions of dose, while
numerical approaches range from PDE solvers and pencil-beam algorithms
to Monte Carlo and SDE-based simulations
\cite{ashby2025efficient}. For physical observables such as energy
deposition, fluence and dose \cite{NewhauserZhang:2015}, it is the
stochastic, numerically approximated models that are required to
capture fluctuations and uncertainty at clinically relevant accuracy.

In radiotherapy and related fields, reliable computation of dose and
fluence demands numerical approximations that faithfully capture the
geometry of particle transport and the stochastic nature of energy
loss. For protons, this means maintaining positivity of the energy,
ensuring that directional vectors remain on the unit sphere and
correctly representing the randomness inherent in multiple scattering
and range straggling. Conventional numerical schemes, even in the
deterministic case, often violate one or more of these requirements
\cite{hairer2006geometric}. The energy variable may become negative
under naive discretisation, or angular updates may drift off the
sphere.  Moreover, many practical tasks, such as machine-learning
guided treatment optimisation, uncertainty quantification, or
calibration to experimental data, depend not just on forward
simulations but also on sensitivities of dose and fluence with respect
to model parameters (e.g. stopping-power coefficients or straggling
parameters).  Finite-difference estimates of these sensitivities are
routinely employed in clinical settings, yet they tend to be biased
and have large variance when applied to stochastic systems, making
them computationally inefficient for high-accuracy work. There is a
clear need for numerical methods that simultaneously respect the
physical constraints of proton transport and provide efficient
pathwise sensitivity information with low empirical variance in a
unified framework.

In this work we combine a structure-preserving discretisation for the
simulation and sensitivity analysis of a stochastic proton transport
model with continuous energy loss and angular diffusion. Our main
contributions are:
\begin{enumerate}
\item We reformulate the dynamics using a logarithmic transformation
  of the energy and an exponential map for angular updates. This
  guarantees exact positivity of energy and preservation of the
  unit-norm constraint on direction at the discrete level. On this
  basis we construct (i) a geometric Euler scheme of strong order
  $1/2$ and (ii) a higher-order scheme that couples a Milstein
  discretisation of the log-energy SDE with a geometric integrator for
  angular evolution, achieving strong order $1$.
\item We derive and discretise coupled forward sensitivity equations
  for the stopping-power and straggling parameters. These
  sensitivities satisfy linear SDEs driven by the same Brownian motion
  as the state, so that the coupled state-sensitivity scheme achieves
  strong order~$1$ convergence. This yields pathwise gradient
  estimators for regularised dose-type observables, and in our
  numerical experiments these exhibit substantially lower variance
  than finite-difference benchmarks.
\item We present numerical experiments that demonstrate the efficacy
  of the proposed methods. The geometric discretisations preserve key
  physical features of proton transport, including Bragg peaks and
  angular spreading, while the pathwise sensitivities converge with
  lower variance than finite-difference benchmarks. We further show
  how incorporating energy-dependent angular diffusion (Moli\`ere
  theory) modifies dose distributions and sensitivities, highlighting
  the flexibility of the framework.
\end{enumerate}

Numerical simulation of stochastic differential equations is a well
developed area, with classical treatments of strong approximation
theory and positivity-preserving schemes appearing in
\cite{kloeden1992stochastic,milstein2004stochastic}. Recent work has
extended this theory to include tamed and truncated schemes for SDEs
with super-linear coefficients and positivity constraints
\cite{deng2024positivity,sabanis2013note}. For manifold-valued
stochastic dynamics, geometric and intrinsic approaches have received
substantial attention, including Lie-group methods
\cite{iserles2000lie}, projected and constrained dynamics in molecular
simulation \cite{stoltz2010free}, Langevin-type sampling and
estimation on manifolds \cite{bharath2025sampling}, frozen-flow and
higher-order integration methods on Riemannian manifolds
\cite{bronasco2025high}, and recent symplectic techniques on reductive
Lie groups \cite{luesink2026symplectic}. Questions of long-time
accuracy and invariant-measure approximation have also been studied
extensively in the stochastic geometric integration literature
\cite{abdulle2014high,laurent2022order}. Structure-preserving
stochastic integrators for manifold-valued systems have moreover been
developed in the context of matrix Lie groups, with
Runge--Kutta--Munthe--Kaas methods and related constructions extended
to SDEs in \cite{muniz2022stochastic}. The approach adopted here draws
on these developments, combining a logarithmic transformation for the
energy variable with manifold-aware angular updates to preserve
positivity and the spherical constraint in a stochastic proton
transport setting.

In proton therapy, dose modelling is typically performed using
deterministic or Monte Carlo methods \cite{bortfeld1997analytical,
  paganetti2018proton, faddegon2020topas,kusch2023kit}, with recent
machine learning and dynamical low-rank solvers for proton transport
\cite{stammer2024deterministic,stammer2025high}.  These approaches
generally decouple model simulation from parameter sensitivity
analysis, or rely on finite difference estimators that suffer from the
tradeoff between bias and high variance, as well as numerical
instability.  Recent work by \cite{ashby2025efficient} applied
sensitivity analysis to an analytic model of proton transport, while a
more complex stochastic model incorporating jump processes was
introduced in \cite{crossley2024jump}. An MCMC-based framework for
inverse inference of dose delivery from experimental data has also
been proposed in \cite{CoxHattamKyprianouPryer:2023}. Pathwise
sensitivity methods are well established in the context of financial
mathematics (see e.g.\ \cite{giles2006smoking, broadie1996estimating,
  capriotti202315}, or Chapter~7 of \cite{glasserman2004monte}) and
stochastic simulation.  Their application in radiation transport has
been limited, though early examples include physical simulation
contexts such as chemical kinetics and photon propagation
\cite{sheppard2012pathwise}. Other recent work includes \cite{caflisch2024adjoint},
where these methods are discussed in the wider context of methods 
for Monte Carlo gradient estimation, with application to photon propagation
as well as the nonlinear Boltzmann equation.
The present work adapts these techniques
to a geometric SDE model of proton motion, coupling
structure-preserving integrators with forward sensitivity equations to
enable efficient pathwise gradient computation for physically
meaningful observables such as dose.  Connections to Feynman-Kac
representations for energy deposition problems are discussed in light
of recent work on stochastic particle transport
\cite{yang2021feynman,kyprianou2025unified}. 

The remainder of the paper is structured as follows.  In
Section~\ref{sec:physical-sde}, we describe the physical setting and
introduce the stochastic differential equation (SDE) model for proton
transport, incorporating continuous slowing down and angular
diffusion. We identify the key structural properties of the model,
including energy positivity, angular manifold constraints and
long-time behaviour. In Section~\ref{sec:euler}, we examine the
standard Euler-Maruyama discretisation and highlight its failure to
preserve the structure of the continuous system. This motivates a
geometric update where the angular component is corrected using the
exponential map. Section~\ref{sec:high-order} presents higher-order
discretisation for both energy and direction, including a Milstein
scheme for the log-transformed energy and a second-order RKMK-like
integrator for angle. Section \ref{sec:dose-sensitivity} describes the
dose functional and how its pathwise sensitivity can be
computationally evaluated. Section~\ref{sec:sensitivity} then develops
the pathwise sensitivity equations, describes their numerical
discretisation and applies them to the estimation of dose
sensitivities with respect to physical parameters.  Section
\ref{sec:numerics} summarises extensive numerical experiments and we
conclude in Section \ref{sec:conclusion}.

\section{Physical Model and Stochastic Formulation}
\label{sec:physical-sde}

The motion of a proton undergoing interactions in a medium is governed
by a combination of deterministic energy loss and stochastic angular
deflections. The proton moves along a trajectory described by its
position $X_t \in \mathbb{R}^3$, direction $\Omega_t \in
\mathbb{S}^2$, and energy $E_t > 0$. In the clinical energy range
(50–150 MeV), the dominant interactions influencing proton transport
are illustrated in Figure~\ref{fig:atom}. These include inelastic
collisions with electrons, elastic Coulomb scattering with nuclei,
and, less frequently, inelastic nuclear reactions.
\begin{figure}[h!]
    \centering
    \renewcommand{\proton}[1]{%
    \shade[ball color=red!80!white, draw=black, line width=0.5pt] (#1) circle (.25);
    \draw[black] (#1) node{$+$};
}

\renewcommand{\neutron}[1]{%
    \shade[ball color=lime!70!green, draw=black, line width=0.5pt] (#1) circle (.25);
}

\renewcommand{\electron}[3]{%
    \draw[rotate = #3, color=gray!60!white, line width=0.8pt] (0,0) ellipse (#1 and #2);
    \shade[ball color=yellow!90!white, draw=black, line width=0.5pt] (0,#2)[rotate=#3] circle (.1);
}

\newcommand{\legendelectron}[1]{%
    \shade[ball color=yellow!90!white, draw=black, line width=0.5pt] (#1) circle (.1);
}

\renewcommand{\nucleus}{%
    \neutron{0.1,0.3}
    \proton{0,0}
    \neutron{0.3,0.2}
    \proton{-0.2,0.1}
    \neutron{-0.1,0.3}
    \proton{0.2,-0.15}
    \neutron{-0.05,-0.12}
    \proton{0.17,0.21}
}

\renewcommand{\inelastic}[2]{
  \proton{#1,#2};
  \draw[->,thick,cyan!80!white](#1+0.5,#2)--(4,-3.7); 
  \draw[->,thick,cyan!80!white](0,-3)--(-0.3,-4); 
  \shade[ball color=yellow] (-0.3,-4) circle (.1); 
}

\renewcommand{\elastic}[2]{
  \proton{#1,#2};
  \draw[->,thick,orange,bend right=90](#1+0.5,#2) to  [out=-30, in=-150] (4,3.);
}

\renewcommand{\protoncollision}[3]{
  \proton{#1,#2};
  \draw[->,thick,red](#1+0.5,#2)--(-0.5,0);%
  \draw[snake=coil, line after snake=0pt, segment aspect=0,%
    segment length=5pt,color=red!80!blue] (0,0)-- +(4,2)%
  node[fill=white!70!yellow,draw=red!50!white, below=.01cm,pos=1.]%
  {$\gamma$};%
  \draw[->,thick,red](#1+0.5,#2)--(-0.5,0);%
  \draw[->,thick,red](0.5,0)--(3.7,-1.8);%
  \neutron{4,-2};  
}

\begin{tikzpicture}[scale=0.5]
    \nucleus
    \electron{1.5}{0.75}{80}
    \electron{1.2}{1.4}{260}
    \electron{4}{2}{30}
    \electron{4}{3}{180}
    \protoncollision{-6.}{0.}{160}
    \inelastic{-6.}{-2.}
    \elastic{-6.}{2.}

      \begin{scope}[shift={(8,-1)}]        
        \draw [thick,rounded corners=2pt] (0,-4) rectangle (8,4); 
        \node at (4, 3.5) {\textbf{Legend}};        
        \proton{0.5, 3}
        \node[anchor=west, font=\footnotesize] at (1.5,3) {Proton}; 
        \neutron{0.5, 2}
        \node[anchor=west, font=\footnotesize] at (1.5,2) {Neutron};         
        \legendelectron{0.5, 1}
        \node[anchor=west, font=\footnotesize] at (1.5,1) {Electron};        
        \draw[->,thick,red] (0.5,0) -- +(1,0);
        \node[anchor=west, font=\footnotesize] at (1.5,0) {Nonelastic collision};        
        \draw[->,thick,cyan!80!white] (0.5,-1) -- +(1,0);
        \node[anchor=west, font=\footnotesize] at (1.5,-1) {Inelastic interaction};         
        \draw[->,thick,orange] (0.5,-2) -- +(1,0);
        \node[anchor=west, font=\footnotesize] at (1.5,-2) {Elastic interaction}; 
        \draw[snake=coil, line after snake=0pt, segment aspect=0,
          segment length=5pt,color=red!80!blue] (0.5,-3) -- +(1,0);
        \node[anchor=west, font=\footnotesize] at (1.5,-3) {prompt-$\gamma$ emission};
      \end{scope}
\end{tikzpicture}
    \caption{\em The three main interactions of a proton with matter.
      A \textcolor{red}{nonelastic} proton--nucleus collision, an
      \textcolor{cyan!80!white}{inelastic} Coulomb interaction with
      atomic electrons and \textcolor{orange}{elastic} Coulomb
      scattering with the nucleus.
      \label{fig:atom}
    }
\end{figure}

Inelastic collisions with electrons lead to gradual energy loss,
typically described deterministically through the Bragg-Kleeman or
Bethe-Bloch equation~\cite{ICRU49}, and are responsible for the sharp
rise and falloff of the Bragg peak. However, due to the stochastic
nature of these collisions, protons of the same initial energy do not
all stop at the same depth, leading to an inherent longitudinal spread
known as range straggling~\cite{bortfeld1997analytical}. Angular
deflections arise primarily from elastic Coulomb interactions with
nuclei and are typically small but frequent, resulting in lateral beam
broadening through multiple scattering~\cite{Gottschalk1993}. These
deflections are well described by Brownian motion on the
sphere. Occasional inelastic nuclear interactions produce discrete
energy losses and secondaries such as neutrons, contributing to the
dose halo and distal falloff~\cite{Schneider2002}, but are not
included in the present model, although further details can be found
in \cite{crossley2024jump}.

Our focus is therefore on a simplified yet representative regime. This
leads to a coupled system for $(X_t, E_t, \Omega_t)$ that captures the
dominant physics while remaining analytically and numerically
tractable.

The dynamics are described by the stochastic differential equations
\begin{equation}
  \label{eq:SDE1}
  \begin{split}
    \d X_t &= \Omega_t \d t \\
    \d E_t &= -S(E_t) \d t \\
    \d \Omega_t &= \sqrt{2\epsilon(E_t)} \circ \d W_t^S.
  \end{split}
\end{equation}
The first equation expresses free streaming with unit velocity in the
direction $\Omega_t$. The second describes the deterministic decrease
in energy, governed by the stopping power $S(E)$. The third models
angular diffusion, where $W_t^S$ is a standard Brownian motion on the
unit sphere and $\epsilon(E)$ determines the scattering strength. In
the simplest case, $\epsilon(E) = \epsilon_0$ may be taken constant,
though in practice it is energy dependent.

The angular SDE is interpreted in the Stratonovich sense, consistent
with Brownian motion on the sphere. This avoids explicit curvature
drift terms and preserves the uniform distribution as the angular
equilibrium law. 

The stochastic description is naturally linked to a forward PDE for
the particle density $\psi(t, \vec x, \vec \omega, e)$ in phase
space. This fluence satisfies
\begin{equation}
  \frac{\partial \psi}{\partial t}
  +
  \vec \omega \cdot \nabla_{\vec x} \psi
  -
  \frac{\partial}{\partial e} \qp{ S(e) \psi }  
  -
  \epsilon(e) \Delta_{\vec \omega} \psi = 0,
\end{equation}
with inflow boundary condition
\begin{equation}
  \psi = g \quad \text{on } \Gamma_-,
\end{equation}
where
\begin{equation}
  \Gamma_- = \{ (\vec x, \vec \omega, e) \in \partial D \times
  \mathbb{S}^2 \times \reals^+ : \vec \omega \cdot \vec n(\vec x) <
  0 \}
\end{equation}
and $\Delta_{\vec \omega}$ denotes the Laplace-Beltrami operator on
the sphere. By the Feynman-Kac formula
\cite{del2004feynman,kyprianou2025unified}, $\psi$ admits a
probabilistic representation in terms of expectations over
realisations of the stochastic process \eqref{eq:SDE1}.

\begin{remark}[Track-length parametrisation]
  \autoref{eq:SDE1} formulates the dynamics with respect to track
  length rather than physical time. Under a physical-time
  parametrisation, the transport equation for $X_t$ would involve a
  velocity factor depending on the current energy $E_t$, rather than
  the unit speed assumed here. The two formulations are related by a
  time-change transformation, so the choice of parametrisation does
  not alter the underlying physics but simplifies the mathematical
  structure.
\end{remark}

\subsection{Stopping Power Models}

The stopping power $S(E)$ characterises the mean rate of energy loss
per unit path length. Microscopically it is described by the
Bethe-Bloch equation \cite{ICRU49}, which accounts for inelastic
collisions with atomic electrons. For reduced models, the
Bragg-Kleeman law provides a simpler approximation,
\begin{equation}
  S(E) = \frac{1}{p \alpha} E^{1-p},
\end{equation}
where $\alpha > 0$ and $p \in [1,2]$ are material-dependent
parameters. This form captures the essential energy dependence of
proton stopping power and is widely used in analytic descriptions of
range and dose \cite{ashby2025efficient}.

\subsection{Moli\'ere Theory}
\label{subsec:moliere}

The angular diffusion coefficient $\epsilon(E)$ in~\eqref{eq:SDE1}
reflects the cumulative deflections from multiple small-angle Coulomb
scatterings. Physical theory and empirical evidence both support its
dependence on the proton’s energy. In particular, Moli\'ere's theory
of multiple Coulomb scattering~\cite{moliere1948theorie} predicts that
the scattering power is inversely related to the square of the
particle's momentum.

In the small-angle limit, the mean squared scattering angle per unit
path length satisfies
\begin{equation}
  \frac{\d}{\d x} \langle \theta^2 \rangle \propto \frac{1}{\beta^2 p^2},
\end{equation}
where $\beta = v/c$ is the dimensionless velocity and $p$ is the
momentum. For protons in the therapeutic energy range, a
non-relativistic approximation gives $p \sim \sqrt{2mE}$ and $\beta^2
\sim E/m$, yielding
\begin{equation}
  \epsilon(E) \propto \frac{1}{E^2}.
\end{equation}
This energy dependence implies that angular diffusion intensifies at
lower energies, consistent with the observed lateral beam broadening
in the distal region of a proton beam.

An empirical parametrisation inspired by the Highland
formula~\cite{highland1975some} is
\begin{equation}\label{eq:moliere-coef}
  \epsilon(E) = \frac{\bar{\epsilon}}{E^2 + \epsilon_c^2},
\end{equation}
where $\bar{\epsilon}$ is a reference scattering strength and
$\epsilon_c$ is a regularising constant, typically in the range 5-10
MeV, calibrated to experimental beam spread.

\subsection{Range Straggling}

The energy loss of a charged particle in a medium is subject to
statistical fluctuations due to the discrete nature of ionisation
interactions. This variability, known as \textit{range straggling},
leads to deviations in the stopping range of particles with the same
initial energy. The function $T(E)$ characterises the variance of the
energy loss per unit path length and is derived from statistical
models of energy deposition.

The first statistical description was provided by Bohr
\cite{bohr1913xxxvii}, who showed that for many small, independent
collisions, the variance of energy loss grows proportionally to the
stopping power. Landau \cite{landau1944} refined this picture by
accounting for the asymmetric tail of the energy-loss distribution. In
the thick-target regime relevant to therapy beams, a standard
modelling choice is
\begin{equation}
  \label{eq:straggling}
  T(E) = \kappa S(E) E,
\end{equation}
where $\kappa$ is a medium-dependent coefficient. For the
Bragg-Kleeman form of $S(E)$, one obtains
\begin{equation}
  T(E) = \frac{\kappa}{p\alpha} E^{2-p}.
\end{equation}

The range variance then follows from
\begin{equation}
  \operatorname{Var}(R)
  =
  \int_0^{E_0} \frac{T(E)}{S(E)^2} \d E
  =
  \kappa p\alpha \int_0^{E_0} E^{p} \d E
  =
  \kappa p\alpha \frac{E_0^{p+1}}{p+1}.
\end{equation}
Solving for $\kappa$ gives
\begin{equation}
  \kappa
  =
  \frac{(p+1) \operatorname{Var}(R)}{p\alpha E_0^{p+1}}.
\end{equation}
Based on the measured distal $90\%-10\%$ falloff width of $1.7$mm in
water for a $62$ MeV beam at Clatterbridge \cite{bonnett199362}, the
standard deviation in proton range can be estimated as $\sigma_R
\approx 0.072$ cm. Substituting $\sigma_R^2 = \operatorname{Var}(R)$
with $p = 1.77$, $\alpha = 2.2 \times 10^{-3}$, $E_0 = 62$ and
$\sigma_R = 0.072$, we obtain
\begin{equation}
  \kappa \approx \frac{ (0.072)^2 \cdot (1.77 + 1) } { 1.77 \cdot (2.2
    \times 10^{-3}) \cdot 62^{2.77} } \approx 4\times 10^{-5}.
\end{equation}

In the distal falloff of the Bragg peak, nuclear interactions also
contribute to broadening the range distribution. While we do not model
these effects here, a description of their impact is given in
\cite{crossley2024jump}.

\subsection{Modification of the SDE to Account for Range Straggling}

The deterministic energy loss model in~\eqref{eq:SDE1} neglects
fluctuations arising from the stochastic nature of energy-depositing
collisions. These fluctuations lead to \emph{range straggling}, where
protons of the same initial energy exhibit a spread in stopping
distance. To account for this, we introduce a stochastic term in the
energy equation, with variance governed by the empirical straggling
function $T(E)$. The resulting stochastic differential equation reads
\begin{equation}
  \label{eq:SDE2}
  \begin{split}
    \d X_t &= \Omega_t \d t, \\
    \d E_t &= -S(E_t) \d t + \sqrt{T(E_t)} \d W_t^E, \\
    \d \Omega_t &= \sqrt{2\epsilon(E_t)} \circ \d W_t^S,
  \end{split}
\end{equation}
where $W_t^E$ is a standard Brownian motion in energy. This choice
ensures that the energy variance grows in accordance with experimental
observations of range straggling in water phantoms. The deterministic
term continues to govern the average energy loss, while the stochastic
term introduces variability that accounts for deviations in stopping
range between different protons.

As a result of this modification, the corresponding transport equation
includes a second derivative in energy, leading to
\begin{equation}
  \frac{\partial \psi}{\partial t}
  +
  \vec \omega \cdot \nabla_{\vec x} \psi
  -
  \frac{\partial}{\partial e} \qp{ S(e) \psi }
  -
  \frac{1}{2} \frac{\partial^2}{\partial e^2} \qp{ T(e) \psi }
  -
  \epsilon(e) \Delta_{\vec \omega} \psi = 0.
\end{equation}
This additional term captures the spread in energy due to range
straggling and contributes to the overall uncertainty in proton range.
Its inclusion is particularly relevant in clinical proton therapy,
where range uncertainties impact treatment planning and dose
deposition.

We note that the coefficients $S(E)$, $T(E)$ and $\epsilon(E)$ are
smooth and locally Lipschitz on $(0,\infty)$, ensuring well-posedness
of the SDE system up to the point where $E_t \to 0$. One way of
handling the delicate behaviour near zero is to impose a killing
boundary at some small $E=E_{\min}>0$. By imposing a killing boundary
at $E=E_{\min}$, we need only consider the process on the domain
$[E_{\min},\infty)$, on which all coefficients are globally Lipschitz.

The form (\ref{eq:straggling}) neglects the asymmetric Landau tail in
the energy-loss distribution and is most accurate in the thick-target
regime where many small collisions dominate. Alternative
parameterisations, such as those based on Vavilov theory
\cite{noshad2012investigation}, provide higher fidelity when rare
large energy losses are important.

\subsection{Stopping Time and Particle Killing}
\label{sec:killing-boundary}
We define the stochastic process
\begin{equation}
  \{ \qp{ X_t, \Omega_t, E_t} :  t \ge 0 \},
\end{equation}
where $X_t \in D\subset \reals^3$, $\Omega_t \in \mathbb{S}^2$ and
$E_t \in \reals^+$ denote the proton's spatial position, direction and
energy at time $t$, respectively. The process models the evolution of
an individual proton trajectory in the phase space $D \times
\mathbb{S}^2 \times \reals^+$.

The process is defined up to a random stopping time $\tau$, given
by
\begin{equation}
  \tau := \inf\{t > 0 : E_t = {E_{\min}} \text{ or } X_t \notin D \},
\end{equation}
which corresponds to the track length at which the proton either exits
the spatial domain or loses all its energy. The stopping time $\tau$
is a random variable that depends on the initial configuration $(X_0,
\Omega_0, E_0)$, though we suppress this dependence to simplify
notation.

After the stopping time $\tau$, the proton is considered to have been
absorbed or exited the domain and no longer evolves. To formalise
this, we extend the state space by adding a cemetery state $\dagger$,
and define the process over $[0, \infty)$ by
  \begin{equation}
    \qp{ X_t, \Omega_t, E_t}
    := 
    \begin{cases}
      \qp{ X_t, \Omega_t, E_t}, & t < \tau, \\
      \dagger, & t \geq \tau.
    \end{cases}
  \end{equation}
  This extension ensures that the particle is well defined for all $t
  \geq 0$, even after the proton has exited the physical system.  The
  cemetery state $\dagger$ is absorbing, i.e. if the process enters
  $\dagger$ at time $\tau$, then $(X_t,\Omega_t,E_t) = \dagger$ for
  all $t \ge \tau$. For consistency, we define all test functions $f:
  (D \times \mathbb{S}^2 \times [E_{\min},\infty)) \cup {\dagger} \to
    \mathbb{R}$ such that $f(\dagger) = 0$. In this way, the state
    $\dagger$ contributes nothing to integrals, expectations or
    occupation measures. This convention allows us to work on the
    extended domain while effectively restricting calculations to the
    live configuration space $D\subset \reals^3$, $\Omega_t \in
    \mathbb{S}^2$.

\subsection{Logarithmic Transformation for Energy Positivity}

In modelling proton transport, the energy variable $E_t$ must remain
strictly positive. Direct numerical simulation of its stochastic
dynamics can nevertheless produce negative values due to
discretisation error, especially when the diffusion coefficient is
state-dependent.

Even in the deterministic case without range straggling, the energy
equation exhibits a finite stopping time. For $p \in (1,2)$, the
solution
\begin{equation}
  E(t) = \bigl( E_0^p - t/\alpha \bigr)^{1/p}
\end{equation}
is strictly positive only up to the critical time $T = \alpha
E_0^p$. As $t \to T^-$, the solution remains continuous but becomes
non-differentiable as $E \to 0^+$. This illustrates the delicate
behaviour near $E=0$, even without stochasticity.

When range straggling is included, the SDE for $E_t$ contains both a
degenerate diffusion coefficient and a singular drift. Neither is
locally Lipschitz at $E=0$, violating standard conditions for
existence and uniqueness of strong solutions. While the probability of
$E_t$ becoming negative may remain small for moderate times, global
well-posedness is not guaranteed.

To regularise the problem and enforce positivity, we reformulate the
dynamics in terms of a log-transformed energy variable. Such
transformations are standard in the SDE literature as a means of
stabilising dynamics and constraining state variables to their
physical domains (see, e.g., \cite{yi2021positivity}).

We define the logarithmic variable
\begin{equation}
  Y_t = \log E_t.
\end{equation}
Applying Itô's lemma, we compute
\begin{equation}
  \d Y_t
  =
  \frac{1}{E_t} \d E_t
  -
  \frac{1}{2} \frac{1}{E_t^2} (\d E_t)^2.
\end{equation}
Substituting the SDE for $E_t$,
\begin{equation}
  \d E_t
  =
  -S(E_t) \d t
  +
  \sqrt{T(E_t)} \d W_t^E,
\end{equation}
we obtain
\begin{equation}\label{eq:log-energy}
  \d Y_t
  =
  \frac{1}{E_t} \qp{
    -S(E_t) \d t
    +
    \sqrt{T(E_t)} \d W_t^E }
  -
  \frac{1}{2} \frac{1}{E_t^2} T(E_t) \d t.
\end{equation}
Since $E_t = \exp\qp{Y_t}$, we rewrite the equation entirely in terms of $Y_t$:
\begin{equation}
  \d Y_t
  =
  \qp{
    -S(\exp\qp{Y_t}) \exp\qp{-Y_t}
    -
    \frac{1}{2} T(\exp\qp{Y_t}) \exp\qp{-2Y_t} }
  \d t
  +
  \sqrt{T(\exp\qp{Y_t})} \exp\qp{-Y_t} \d W_t^E.
\end{equation}
This reformulation ensures that $Y_t$ evolves naturally in
$\mathbb{R}$, so its numerical approximation cannot lead to negative
values of $E_t$ when transformed back via
\begin{equation}
  E_t = \exp\qp{Y_t}.
\end{equation}

\subsection{Structural Properties of the Model}
\label{sec:structures}

The system evolves in terms of $(X_t, Y_t, \Omega_t)$, and is given by
\begin{equation}
  \label{eq:LogSDE}
  \begin{split}
    \d X_t &= \Omega_t   \d t, \\
    \d Y_t
    &= \qp{ -S(\exp\qp{Y_t}) \exp\qp{-Y_t} - \tfrac{1}{2} T(\exp\qp{Y_t}) \exp\qp{-2Y_t} } \d t
    +
    \sqrt{T(\exp\qp{Y_t})} \exp\qp{-Y_t} \d W_t^E, \\
    \d \Omega_t &= \sqrt{2\epsilon(\exp\qp{Y_t})} \circ \d W_t^S,
  \end{split}
\end{equation}
with initial conditions
\begin{equation}
  X_0 = x_0 \in D\subset \mathbb{R}^3, \quad \Omega_0 = \omega_0 \in \mathbb{S}^2, \quad Y_0 = \log E_0, \quad E_0 > 0.
\end{equation}

\begin{proposition}[Energy positivity under the log transform]
  Let $(X_t,Y_t,\Omega_t)$ solve \eqref{eq:LogSDE} with $E_0>0$ and
  define $E_t := \exp\qp{Y_t}$ for $t<\tau$, where $\tau$ is the killing time
  from Section~\ref{sec:killing-boundary}. Then $E_t>0$ almost surely
  for all $t<\tau$.
\end{proposition}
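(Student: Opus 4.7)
The proof is essentially tautological once the log-transformation is in place, but one should be careful to justify well-posedness of the $Y$-SDE on the relevant time interval, since this is where the work actually happens. My plan is therefore to (i) argue existence and uniqueness of a strong, $\mathbb{R}$-valued solution to the $Y$-equation up to the killing time $\tau$, and (ii) deduce positivity of $E_t$ by applying the exponential.

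The first step is to observe that by the construction in Section~\ref{sec:killing-boundary} the process is stopped as soon as $E_t$ hits $E_{\min}>0$. In log coordinates this corresponds to stopping when $Y_t$ reaches $y_{\min}:=\log E_{\min}$, so for $t<\tau$ the trajectory satisfies $Y_t\in[y_{\min},\infty)$. I would then consider the drift and diffusion coefficients of \eqref{eq:LogSDE}, namely
\begin{equation*}
  b(y) = -S(\mathrm{e}^y)\mathrm{e}^{-y} - \tfrac{1}{2} T(\mathrm{e}^y)\mathrm{e}^{-2y},
  \qquad
  \sigma(y) = \sqrt{T(\mathrm{e}^y)}\,\mathrm{e}^{-y},
\end{equation*}
and note that, because $S$, $T$ and $\epsilon$ are smooth and locally Lipschitz on $(0,\infty)$, the maps $b$ and $\sigma$ are locally Lipschitz on $\mathbb{R}$ and in particular globally Lipschitz on the truncation region $\{y\ge y_{\min}\}$ (by smoothness of the involved functions and boundedness of $\mathrm{e}^{-y}$ from above on that set). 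Standard SDE theory (for instance the classical results in \cite{kloeden1992stochastic}) then provides a unique strong solution $Y_t$ on $[0,\tau)$ that is almost surely real-valued and does not explode before $\tau$.

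Given that $Y_t\in\mathbb{R}$ almost surely for $t<\tau$, the definition $E_t:=\exp(Y_t)$ immediately yields $E_t>0$ almost surely for all $t<\tau$. To close the circle, I would verify that this $E_t$ actually solves the original energy SDE in \eqref{eq:SDE2} by applying Itô's lemma to $\exp(Y_t)$ in reverse; this is precisely the calculation in~\eqref{eq:log-energy} executed backwards, and it shows that the log formulation and the original formulation coincide on $[0,\tau)$ whenever either admits a solution.

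The only mildly delicate point, and the one I would flag explicitly, is non-explosion of $Y_t$ before the killing time: the argument above uses the killing boundary at $E_{\min}$ to truncate away the region where $\mathrm{e}^{-y}$ grows without bound. Without this boundary, global Lipschitz continuity fails and one would need a Lyapunov-type non-explosion argument (for instance a Feller test or a moment bound on $\mathrm{e}^{-Y_t}$) to rule out the possibility that $Y_t\to-\infty$ in finite time. Within the present framework this is avoided by construction, so the proof reduces to the two short steps above.
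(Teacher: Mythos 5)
Your proposal is correct and follows essentially the same route as the paper: the paper's proof likewise observes that the coefficients of \eqref{eq:LogSDE} are locally Lipschitz on the live domain (made globally Lipschitz by the killing boundary at $E_{\min}$), so that $Y_t$ is a well-defined $\mathbb{R}$-valued process with continuous paths for $t<\tau$, and positivity of $E_t=\exp(Y_t)$ is then immediate. Your additional remarks on non-explosion and on recovering the original energy SDE via Itô's lemma are sensible elaborations of the same argument rather than a different approach.
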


\begin{proof}
  By construction $Y_t\in\mathbb{R}$ with continuous sample paths for
  $t<\tau$ (the coefficients of \eqref{eq:LogSDE} are locally Lipschitz
  on the live domain), hence $E_t=\exp(Y_t)$ is well defined and
  strictly positive for all $t<\tau$, almost surely. After $\tau$ the
  process is sent to the cemetery state and $E_t$ is not a physical
  variable.
\end{proof}

\begin{proposition}[Monotone decay of the expected energy]
  Let $(X_t,Y_t,\Omega_t)$ solve \eqref{eq:LogSDE} with killing time
  $\tau$ and set $E_t = \exp\qp{Y_t}$ for $t<\tau$. Suppose $S(E)>0$ for
  all $E>0$ and $\mathbb{E} \left[ S(E_t) \right]<\infty$ for
  $t<\tau$. Then for all $t<\tau$,
  \begin{equation}
    \frac{\d}{\d t} \mathbb{E}[E_t] = \mathbb{E} \qb{ -S(E_t) } < 0
  \end{equation}
  with strict inequality whenever $\mathbb{P}(t<\tau)>0$.
\end{proposition}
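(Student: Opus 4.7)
The plan is to invert the logarithmic transformation to recover the original energy SDE, take expectations, and exploit positivity of $S$. Concretely, since $E_t = \exp(Y_t)$ and $Y_t$ solves the log-SDE in \eqref{eq:LogSDE}, Itô's lemma applied to $\phi(y)=\exp(y)$ reverses the calculation that produced \eqref{eq:log-energy}: the Itô correction exactly cancels the $-\tfrac{1}{2}T(E_t)\exp(-2Y_t)$ term, leaving
\begin{equation*}
  \d E_t \;=\; -S(E_t)\,\d t \;+\; \sqrt{T(E_t)}\,\d W_t^E,
  \qquad t<\tau.
\end{equation*}
This is just the original energy SDE of \eqref{eq:SDE2} restricted to the live domain, so $E_t$ admits the integral representation $E_t = E_0 - \int_0^t S(E_s)\,\d s + \int_0^t \sqrt{T(E_s)}\,\d W_s^E$ for $t<\tau$.

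The next step is to take expectations and argue that the stochastic-integral term contributes nothing. Because $T$ is only locally Lipschitz on $(0,\infty)$ and the process may approach $0$, the integral $M_t := \int_0^{t}\sqrt{T(E_s)}\,\d W_s^E$ is in general only a local martingale. To handle this, I would introduce a localising sequence $\tau_n = \inf\{t>0 : E_t\notin[1/n,n]\}\wedge\tau$, so that $\tau_n\uparrow\tau$ almost surely; on $[0,\tau_n]$ the coefficients are bounded and $M_{t\wedge\tau_n}$ is a genuine square-integrable martingale, hence
\begin{equation*}
  \mathbb{E}[E_{t\wedge\tau_n}]
  \;=\; E_0 \;-\; \mathbb{E}\!\left[\int_0^{t\wedge\tau_n} S(E_s)\,\d s\right].
\end{equation*}
Passing $n\to\infty$, dominated convergence (justified by the hypothesis $\mathbb{E}[S(E_t)]<\infty$ for $t<\tau$, together with the convention that $S(E_s)=0$ once the process is killed) gives the corresponding identity at time $t$, and differentiating in $t$ yields $\frac{\d}{\d t}\mathbb{E}[E_t] = \mathbb{E}[-S(E_t)]$.

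Strict negativity is then immediate: by the previous proposition $E_t>0$ almost surely on $\{t<\tau\}$, so $S(E_t)>0$ there by hypothesis; if $\mathbb{P}(t<\tau)>0$ then $\mathbb{E}[S(E_t)]>0$ and the claim follows. The only genuinely delicate point is the localisation/uniform integrability argument needed to upgrade $M$ from a local to a true martingale, particularly in the regime where $E_t$ may approach the killing boundary $E_{\min}$; everything else is a direct calculation. Using the killing boundary at $E_{\min}>0$ in fact simplifies this considerably, since on the live domain the coefficients are globally Lipschitz and standard SDE theory supplies the required moment bounds directly, bypassing any subtle behaviour at the origin.
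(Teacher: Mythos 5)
Your proposal is correct and follows essentially the same route as the paper: apply It\^o's formula to $\exp(Y_t)$, observe that the It\^o correction cancels the $-\tfrac12 T\exp(-2Y)$ drift term to recover $\d E_t = -S(E_t)\,\d t + \sqrt{T(E_t)}\,\d W_t^E$, take expectations, and invoke positivity of $S$. The one addition is your localisation argument justifying that the stochastic integral has zero expectation (a step the paper's proof takes for granted), which is a worthwhile piece of rigour and, as you note, is made easy by the killing boundary at $E_{\min}$.
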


\begin{proof}
  Applying Itô’s formula to $E_t = \exp\qp{Y_t}$, we have
  \begin{equation}
    \d E_t = \exp\qp{Y_t} \d Y_t + \tfrac{1}{2} \exp\qp{Y_t} (\d
    Y_t)^2.
  \end{equation}
  Substituting the SDE for $Y_t$ from \eqref{eq:LogSDE}, we
  write
  \begin{align}
    \d Y_t &= \mu_t \d t + \beta_t \d W_t^E,
    \\
    \mu_t &= -S(\exp\qp{Y_t}) \exp\qp{-Y_t} - \tfrac{1}{2} T(\exp\qp{Y_t}) \exp\qp{-2Y_t},
    \\
    \beta_t &= \sqrt{T(\exp\qp{Y_t})} \exp\qp{-Y_t}.
  \end{align}
  Then the drift of $E_t$ becomes
  \begin{equation}
    \frac{\d}{\d t}
    \mathbb{E}[E_t]
    =
    \mathbb{E} \qb{ \exp\qp{Y_t} \mu_t + \tfrac{1}{2} \exp\qp{Y_t} \beta_t^2}.
  \end{equation}
  Substituting the expressions for $\mu_t$ and $\beta_t$, we
  compute
  \begin{align}
    \exp\qp{Y_t} \mu_t &= -S(\exp\qp{Y_t}) - \tfrac{1}{2} T(\exp\qp{Y_t}) \exp\qp{-Y_t},
    \\
    \tfrac{1}{2} \exp\qp{Y_t} \beta_t^2 &= \tfrac{1}{2} T(\exp\qp{Y_t})
    \exp\qp{-Y_t},
  \end{align}
  so the $T$ terms cancel and we obtain
  \begin{equation}
    \frac{\d}{\d t} \mathbb{E}[E_t]
    =
    -\mathbb{E} \qb{ S(E_t) },
  \end{equation}
  which is strictly negative for all $t$ as $S(E_t) > 0$.
\end{proof}
  
\begin{proposition}[Stationarity and exponential mixing for constant angular diffusion]
\label{prop:ergodic}
Let $\Omega_t \in \mathbb{S}^2$ solve
\begin{equation}
  \d\Omega_t = \sqrt{2 \epsilon_0}\circ \d W_t^S,
\end{equation}
with constant $0 < \epsilon_0 \in \reals$, where $W_t^S$ is Brownian
motion on the sphere. Let $\sigma$ denote the uniform probability
measure on $\mathbb{S}^2$. Then $\sigma$ is invariant and
ergodic. Furthermore, writing the Markov semigroup as
\begin{equation}
  (P_t f)(\vec \omega)
  :=
  \mathbb{E}\qp{ f(\Omega_t) \ \vert \ \Omega_0 = \vec \omega },
\end{equation}
we have, for all $f \in \leb 2(\mathbb{S}^2,\sigma)$ with
$\int_{\mathbb{S}^2} f \d \sigma = 0$,
\begin{equation}\label{eq:L2-mixing}
  \Norm{P_t f}_{\leb 2(\sigma)}
  \le
  \exp\qp{-2\epsilon_0 t} \Norm{f}_{\leb 2(\sigma)},
\end{equation}
so the law of $\Omega_t$ converges exponentially fast to $\sigma$ as
$t\to\infty$ in $\leb 2$. Equivalently, if $\rho_t$ is the density of
the law of $\Omega_t$ with respect to $\sigma$, then
\begin{equation}
  \Norm{\rho_t - 1}_{\leb 2(\sigma)}
  \le
  \exp\qp{-2 \epsilon_0 t}
  \Norm{\rho_0 - 1}_{\leb 2(\sigma)} .
\end{equation}
\end{proposition}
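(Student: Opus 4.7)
The plan is to reduce the claim to the classical spectral theory of the Laplace-Beltrami operator on $\mathbb{S}^2$. First I would identify the generator of the Markov semigroup $P_t$. Because the angular SDE is interpreted in the Stratonovich sense and $W^S$ is spherical Brownian motion, the Stratonovich formulation avoids Itô curvature corrections and the infinitesimal generator is
\begin{equation}
  L = \epsilon_0 \, \Delta_{\mathbb{S}^2},
\end{equation}
where $\Delta_{\mathbb{S}^2}$ is the Laplace-Beltrami operator on the sphere. This is the standard identification for Brownian motion on a compact Riemannian manifold.

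Next I would establish invariance and reversibility. Since $\mathbb{S}^2$ is a closed manifold, integration by parts gives $\int_{\mathbb{S}^2} (\Delta_{\mathbb{S}^2} f)\, \d\sigma = 0$ and $\int_{\mathbb{S}^2} f\, \Delta_{\mathbb{S}^2} g\, \d\sigma = \int_{\mathbb{S}^2} g\, \Delta_{\mathbb{S}^2} f\, \d\sigma$ for smooth $f,g$. The first identity shows that $\sigma$ is invariant for $P_t$; the second shows that $L$ is self-adjoint on $\leb 2(\sigma)$, so $P_t = \exp(tL)$ is self-adjoint and the process is reversible with respect to $\sigma$. In particular, if $\rho_0$ is the initial density with respect to $\sigma$, then $\rho_t = P_t \rho_0$.

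The exponential decay then follows from the spectral decomposition of $-\Delta_{\mathbb{S}^2}$ in spherical harmonics $\{Y_{\ell m}\}$ with eigenvalues $\ell(\ell+1)$ for $\ell = 0,1,2,\dots$. The kernel ($\ell=0$) consists of constants, which implies ergodicity of $\sigma$ (the only $P_t$-invariant $\leb 2$ functions are constant). For $f \in \leb 2(\sigma)$ with $\int f\,\d\sigma = 0$ I would expand $f = \sum_{\ell \ge 1} \sum_m c_{\ell m} Y_{\ell m}$, giving
\begin{equation}
  \Norm{P_t f}_{\leb 2(\sigma)}^2
  = \sum_{\ell \ge 1} \sum_m \exp\qp{-2 \epsilon_0 \ell(\ell+1) t} \abs{c_{\ell m}}^2
  \le \exp\qp{-4 \epsilon_0 t} \Norm{f}_{\leb 2(\sigma)}^2,
\end{equation}
using that the spectral gap satisfies $\ell(\ell+1) \ge 2$ for $\ell \ge 1$. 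Taking square roots yields \eqref{eq:L2-mixing}, and applying the bound to $f = \rho_0 - 1$ (which has zero mean since $\int \rho_0 \d\sigma = 1$) and using $\rho_t - 1 = P_t(\rho_0 - 1)$ by reversibility gives the density form.

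The only genuine issue is justifying that Stratonovich spherical Brownian motion has generator exactly $\epsilon_0 \Delta_{\mathbb{S}^2}$; once that is in hand, the remainder is a transcription of standard facts about the spherical Laplacian. I would therefore cite the relevant chapter of a standard text on stochastic analysis on manifolds to anchor this identification, and present the spectral calculation explicitly as the content of the proof.
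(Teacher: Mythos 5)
Your proposal is correct and follows essentially the same route as the paper: identify the generator as $\epsilon_0\Delta_{\mathbb{S}^2}$, use integration by parts for invariance and self-adjointness, and obtain the decay rate from the spectral gap $\ell(\ell+1)\ge 2$ of the spherical Laplacian on the mean-zero subspace. Your version is slightly more explicit in the spherical-harmonic expansion and in deducing the density form via reversibility, but there is no substantive difference in approach.
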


\begin{proof}
  The Stratonovich dynamics have generator
  \begin{equation}
    L=\epsilon_0 \Delta_{\vec \omega}
  \end{equation}
  on defined on $C^\infty(\mathbb{S}^2)$, which is dense in
  $L^2(\mathbb{S}^2,\sigma)$. Integration by parts shows that
  $\Delta_{\vec \omega}$ is symmetric with respect to the
  $L^2(\sigma)$ inner product and since $\Delta_{\vec \omega} 1 = 0$,
  the uniform measure $\sigma$ is invariant under the semigroup
  $\exp\qp{tL}$. Now, the spectrum of $-\Delta_{\vec \omega}$ on
  $\mathbb{S}^2$ is $\{ \ell(\ell+1) : \ell=0,1,2,\dots \}$ with
  eigenspaces given by spherical harmonics and the first non-zero
  eigenvalue being $2$. Therefore, on the mean-zero subspace, the
  semigroup $P_t = \exp\qp{tL}$ contracts as
  \eqref{eq:L2-mixing}. Compactness and ellipticity imply uniqueness
  of the invariant measure and ergodicity.
\end{proof}

\begin{remark}[Energy-dependent scattering]
When the angular coefficient depends on energy, the joint process obeys
\begin{equation}
  \d\Omega_t
  = \sqrt{2\epsilon(E_t)}\circ \d W_t^S,
\end{equation}
so the \emph{marginal} dynamics of $\Omega_t$ are time-inhomogeneous
and generally non-Markovian. For each frozen energy level $e$, the
uniform law on $\mathbb{S}^2$ is invariant for the generator
$\epsilon(e)\Delta_{\vec\omega}$ (since $\Delta_{\vec\omega}1=0$), but
there need not exist a stationary measure for the marginal of
$\Omega_t$ when coupled to $E_t$. If $\epsilon(E)$ grows as
$E\downarrow 0$ (as suggested by Moli\'ere theory), the instantaneous
angular mixing rate increases along paths as the energy decreases and,
informally, conditional on survival, the angular distribution becomes
rapidly isotropic.
\end{remark}

\begin{remark}[Long-time behaviour prior to killing]
Let $\tau$ be the killing time from \S\ref{sec:killing-boundary}. On
$t<\tau$, the components of \eqref{eq:LogSDE} exhibit distinct
trends. The energy decreases on average and, under mild regularity,
$E_t\to 0$ almost surely as $t\uparrow \tau$. If $\epsilon(E)$
increases as $E\downarrow 0$, the angular motion mixes faster and the
conditional law of $\Omega_t$ approaches isotropy before killing. The
position exhibits increasing lateral spread due to angular diffusion
with killing at $E_{\min}>0$, $X_t$ is only defined on $[0,\tau)$,
  after which the process is sent to the cemetery state.
\end{remark}

\section{Euler--Maruyama Discretisation}
\label{sec:euler}

To solve the stochastic system numerically, we begin by applying the
Euler--Maruyama method, an explicit scheme for approximating solutions
to SDEs. While straightforward to implement, this scheme fails to
preserve key structural properties of the continuous dynamics,
motivating the development of geometric alternatives in the next
section.

\subsection{Time Discretisation}

Let $\{t_n\}_{n=0}^{N}$ be a uniform discretisation of the time
interval $[0,T]$ with step size $h$, where
\begin{equation}
  t_n = n h, \quad \text{for } n = 0,1,\dots,N, \quad \text{with } h = \frac{T}{N}.
\end{equation}
We denote the discrete approximations of $X_t$, $Y_t$ and $\Omega_t$
at time $t_n$ by $X_n$, $Y_n$ and $\Omega_n$, respectively. The
corresponding energy variable is given by $E_n = \exp\qp{Y_n}$.

\subsection{Euler--Maruyama Approximation}

Applying the Euler--Maruyama scheme to the system \eqref{eq:LogSDE},
we obtain the updates
\begin{equation}
  \begin{split}
    X_{n+1} &= X_n + h \Omega_n, \\
    Y_{n+1} &= Y_n
      - h S(\exp\qp{Y_n}) \exp\qp{-Y_n}
      - \tfrac{h}{2} T(\exp\qp{Y_n}) \exp\qp{-2Y_n} 
      + \sqrt{T(\exp\qp{Y_n})} \exp\qp{-Y_n} \xi_n^E, \\
      \Omega_{n+1} &= \qp{1 - \epsilon_0 h} \Omega_n + \sqrt{2\epsilon_0}  \xi_n^S.     
  \end{split}
\end{equation}
Here, $\xi_n^E$ denotes a Gaussian increment with distribution
$\mathcal{N}(0, h)$ and the angular increment $\xi_n^S$ is
constructed by projecting standard Gaussian noise onto the tangent
space at $\Omega_n$, that is,
\begin{equation}
  \xi_n^S
  =
  \qp{ I - \Omega_n \Omega_n^\top } \eta_n, \quad \eta_n \sim \mathcal{N}(0, h I_3).
\end{equation}

\subsection{Properties and Convergence Notes}
The Euler--Maruyama scheme is a first-order method that achieves
strong convergence of order $O(h^{1/2})$ \footnote{we use 'strong
order $p$' in the standard mean-square sense} for SDEs with globally
Lipschitz coefficients~\cite{kloeden1992stochastic}. In our setting,
the energy equation involves nonlinear drift and diffusion terms that
are not globally Lipschitz in $E_t$, due to their power-law form. This
precludes direct application of standard convergence theory.

Imposing a killing boundary at $E=E_{\min}>0$ restricts the state
space to a compact subdomain on which the coefficients relevant for
the finite-time analysis below are Lipschitz. This is the setting in
which we establish the $O(h^{1/2})$ convergence rate in practice.

\begin{remark}[Loss of Structure under Euler Discretisation]
  While Euler--Maruyama is consistent, it does not preserve key
  geometric properties of the SDE system \eqref{eq:LogSDE}.

  For the angular dynamics, the update does not maintain the unit-norm
  constraint, so the iterates drift off the sphere. A post-processing
  projection restores the constraint, but alters the discretised
  dynamics and is therefore best regarded as a baseline correction
  rather than a structure-preserving approximation.
    
  For the energy dynamics, direct discretisation of $E_t$ can also
  produce negative energies, since the Gaussian increment may outweigh
  the deterministic drift. This violates the strict positivity of
  physical energy and can lead to instability. The log-transformed
  formulation avoids this issue by evolving $Y_t=\log E_t$ in
  $\mathbb{R}$, ensuring that $E_n = \exp\qp{Y_n}$ remains strictly
  positive at all times.
\end{remark}

\subsection{Geometric Angular Update}
\label{sec:geo-euler}

A common workaround (see e.g. \cite{stoltz2010free}) for
norm-constraint violation is to apply an explicit normalisation after
each Euler update, that is post-process
\begin{equation}
  \Omega_{n+1} = \qp{1 - \epsilon_0 h}\Omega_n + \sqrt{2\epsilon_0} \xi_n^S,
  \quad
  \Omega_{n+1} \leftarrow \frac{\Omega_{n+1}}{\norm{\Omega_{n+1}}}.
\end{equation}
While this projection enforces norm preservation, it introduces a
statistical bias as the renormalisation step distorts the distribution
of $\Omega_n$ and breaks consistency with the continuous SDE.

A more consistent approach is to update $\Omega_n$ using the
exponential map on the sphere
\begin{equation}
  \Omega_{n+1} = \Exp{\Omega_n} \qp{ \sqrt{2\epsilon_0} \xi_n^S },
\end{equation}
where $\Exp{\Omega_n}$ denotes the Riemannian exponential map on the
sphere. This operation transports $\Omega_n$ along the geodesic
determined by $\xi_n^S$, ensuring that the update remains on the
sphere while preserving the correct diffusion behaviour.

\begin{remark}[Exact Simulation of Angular Process with Constant $\epsilon$]
  When $\epsilon=\epsilon_0$ is constant the process admits a symmetry
  reduction that allows exact sampling from the transition
  law~\cite{mijatovic2020note,beskos2005exact}.  Specifically, by
  rotating the coordinate system so that the initial direction aligns
  with the north pole, the process reduces to a time-homogeneous
  Wright--Fisher diffusion for the polar angle. This permits exact
  simulation of $\Omega_t$ by drawing from known distributions on
  $\mathbb{S}^2$, conditioned on the elapsed time and initial
  orientation. This method provides exact marginal samples and
  bypasses the need for time stepping when only the angular position
  at fixed time is required \cite{crossley2024jump}. For
  energy-dependent $\epsilon(E)$, in general no such reduction is
  available and time discretisation is necessary.
\end{remark}

\begin{theorem}[Strong Convergence of the Geometric Euler Scheme]
  Let $(X_t, Y_t, \Omega_t)$ be the solution to the log-transformed
  SDE system \eqref{eq:LogSDE}, with $E_t = \exp\qp{Y_t}$. Let $(X_n, Y_n,
  \Omega_n)$ denote the numerical approximation given by the geometric
  Euler scheme
  \begin{equation}
    \label{eq:LogEulerExp}
    \begin{split}
      X_{n+1} &= X_n + h  \Omega_n, \\
      Y_{n+1} &=
        Y_n
        - h S(\exp\qp{Y_n}) \exp\qp{-Y_n} 
        - \tfrac{h}{2} T(\exp\qp{Y_n}) \exp\qp{-2Y_n} 
        + \sqrt{T(\exp\qp{Y_n})} \exp\qp{-Y_n}  \xi_n^E, \\
      \Omega_{n+1} &= \Exp{\Omega_n} \qp{ \sqrt{2\epsilon(\exp\qp{Y_n})}  \xi_n^S },
    \end{split}
  \end{equation}
  where $\xi_n^E \sim \mathcal{N}(0,h)$ is a scalar Gaussian increment
  for the energy noise, and $\xi_n^S \sim \mathcal{N}(0, h I_3)$ is a
  tangent vector sampled in $T_{\Omega_n} \mathbb{S}^2$. The map
  $\Exp{\Omega_n}$ denotes the exponential map on the sphere. Then the
  scheme satisfies the strong convergence estimate
  \begin{equation}
    \mathbb{E} \qb{
      \sup_{0 \leq n \leq N}
      \qp{
        \norm{E_n - E(t_n)}^2
        +
        \norm{\Omega_n - \Omega(t_n)}^2
      }
    }
    \leq C h,
  \end{equation}
  and
  \begin{equation}
    \mathbb{E} \qb{
      \sup_{0 \leq n \leq N}
      \qp{
            \norm{X_n - X(t_n)}^2
      }
    }
    \leq C h^2
  \end{equation}
  where $E_n = \exp\qp{Y_n}$ and $C$ is a constant independent of $h$. That
  is, the method converges strongly with order $O(h^{1/2})$.
\end{theorem}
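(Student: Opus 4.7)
The plan is to reduce the theorem to classical strong convergence results for Euler--Maruyama schemes with globally Lipschitz coefficients (cf.\ \cite{kloeden1992stochastic}), applied component by component, and to close the coupling with a discrete Grönwall argument. The killing boundary at $E_{\min}$ plays a key role here: as noted immediately before the theorem, it restricts the dynamics to a subdomain on which the coefficients of \eqref{eq:LogSDE}, written in terms of $Y$, are smooth with bounded derivatives and hence globally Lipschitz. I would make this truncation explicit at the outset and carry the implied uniform moment bounds on $Y_n$ and $Y(t_n)$ throughout.

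The log-energy component is a standard Euler--Maruyama discretisation of a scalar SDE with globally Lipschitz drift and diffusion, so the textbook argument (Chapter 10 of \cite{kloeden1992stochastic}) immediately yields $\mathbb{E}[\sup_{n\le N}|Y_n - Y(t_n)|^2] \le Ch$, and since $\exp$ is Lipschitz on the bounded range of $Y$ the same bound transfers to $E_n - E(t_n)$. The position $X_n$ is an explicit Riemann sum of $\Omega$, so its squared error is controlled linearly by $h\sum_{k} \|\Omega_k - \Omega(t_k)\|^2$ together with an $O(h)$ quadrature remainder that uses the $1/2$-Hölder continuity in $\mathbb{L}^2$ of $t\mapsto\Omega_t$.

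The substance of the proof is in the angular component. I would expand the Riemannian exponential map on $\mathbb{S}^2$ in an ambient chart: for a unit vector $p$ and tangent vector $v\in T_p\mathbb{S}^2$,
\begin{equation}
\Exp{p}(v) = p + v - \tfrac{1}{2}\|v\|^2 p + R(p,v), \quad \|R(p,v)\| \le C\|v\|^3.
\end{equation}
Substituting $v = \sqrt{2\epsilon(\exp\qp{Y_n})}\,\xi_n^S$, whose $\mathbb{L}^2$ norm is $O(h^{1/2})$, the quadratic term has conditional mean $-2\epsilon(\exp\qp{Y_n})\,h\,\Omega_n$, which reproduces \emph{exactly} the extrinsic Itô drift obtained by converting the Stratonovich equation $\d\Omega_t = \sqrt{2\epsilon(E_t)}\circ\d W_t^S$ into Itô form in $\mathbb{R}^3$. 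The residual fluctuation $-\epsilon(\exp\qp{Y_n})(\|\xi_n^S\|^2 - 2h)\Omega_n$ is a martingale increment of conditional variance $O(h^2)$, and the remainder $R$ contributes $O(h^{3/2})$ per step. Hence the geometric scheme coincides with an Euler--Maruyama discretisation of the extrinsic Itô SDE for $\Omega_t$, whose coefficients are smooth and bounded on $\mathbb{S}^2$ uniformly in the truncated energy range, plus higher-order perturbations that do not spoil strong order $1/2$.

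Finally I would combine the three component bounds through the total squared error $e_n^2 := \|X_n - X(t_n)\|^2 + |Y_n - Y(t_n)|^2 + \|\Omega_n - \Omega(t_n)\|^2$. Lipschitz continuity of the coefficients in $(Y,\Omega)$, together with the cross-coupling through $\epsilon(\exp\qp{Y_n})$ controlled by the already-established bound on $|Y_n - Y(t_n)|$, yields a recursion of the form
\begin{equation}
\mathbb{E}\qb{\max_{k\le n+1} e_k^2} \le (1 + Ch)\,\mathbb{E}\qb{\max_{k\le n} e_k^2} + Ch^2 + M_n,
\end{equation}
where $M_n$ collects the martingale contributions handled by Doob's maximal inequality. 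Discrete Grönwall then yields the stated $O(h)$ mean-square bound. The main obstacle I anticipate is the bookkeeping in the exponential-map expansion, in particular verifying that the second-order term exactly reproduces the extrinsic Itô drift of the Stratonovich angular SDE; once that consistency is in place, the remainder is a textbook adaptation of Euler--Maruyama convergence theory to the coupled system.
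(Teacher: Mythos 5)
Your proposal is correct and reaches the same conclusion, but the angular component is handled by a genuinely different argument from the paper's. The paper decomposes the one-step angular error into a stability term (the difference of two exponential-map updates evaluated at the numerical and exact base points, controlled by Lipschitz continuity of $(p,v)\mapsto\mathrm{Exp}_p(v)$) plus a local discretisation error $\delta_n$, whose mean-square bound $O(h^2)$ is asserted rather than derived; it never actually verifies that the exponential-map update is consistent with the Stratonovich dynamics to the required order. You instead expand $\mathrm{Exp}_p(v)=p+v-\tfrac12\|v\|^2p+O(\|v\|^3)$ in the ambient chart and check that the conditional mean of the quadratic term, $-2\epsilon(\exp(Y_n))h\,\Omega_n$ (using that $\xi_n^S$ is effectively a two-dimensional Gaussian of covariance $hI_2$ in the tangent plane), reproduces the extrinsic It\^o drift of the Stratonovich sphere SDE, so that the geometric scheme is an Euler--Maruyama discretisation of the ambient It\^o equation up to $O(h^{3/2})$ perturbations. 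This is the consistency computation the paper's $\delta_n$ silently relies on, so your route is more self-contained; what it costs is the extra bookkeeping of the It\^o--Stratonovich conversion and the chi-squared fluctuation term, which you carry out correctly. The energy and position components match the paper's treatment (you invoke the killing boundary for global Lipschitzness where the paper appeals to localisation, an immaterial difference), and your explicit use of Doob's maximal inequality to move the supremum inside the expectation is a point the paper glosses over.
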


\begin{proof}
  We begin with the energy variable. The drift and diffusion
  coefficients in the $Y_t$ SDE are given by smooth compositions of
  the form $S(\exp\qp y)\exp\qp{-y}$ and $T(\exp\qp y)\exp\qp{-2y}$, which are smooth and
  locally Lipschitz for $y \in \mathbb{R}$ provided $T > 0$ since
  $E_{min} > 0$. Although they do not satisfy global linear growth
  conditions due to their exponential decay as $y \to -\infty$, the
  solution $Y_t$ remains bounded with high probability on finite time
  intervals, since the energy process $E_t = \exp\qp{Y_t}$ decays over
  time.

  Therefore, the Euler--Maruyama scheme applied to $Y_t$ converges
  strongly with order $O(h^{1/2})$, by standard localisation arguments
  (see \cite[Thm~10.2.2]{kloeden1992stochastic}). We obtain
  \begin{equation}
    \mathbb{E} \qb{ \sup_{0 \leq n \leq N} \norm{Y_n - Y(t_n)}^2 } \leq C h,
  \end{equation}
  and thus, by smoothness of the exponential,
  \begin{equation}
    \mathbb{E} \qb{ \sup_{0 \leq n \leq N} \norm{E_n - E(t_n)}^2 } \leq C h.
  \end{equation}
   
  For the angular variable we proceed by estimating the error via the
  identity
\begin{equation}
  \Omega_{n+1} - \Omega(t_{n+1})
  =
  \qb{ \Exp{\Omega_n} \qp{ \sqrt{2\epsilon(E_n)} \xi_n^S}
    -
    \Exp{\Omega(t_n)} \qp{ \sqrt{2\epsilon(E(t_n))} \xi_n^S}
  }
  + \delta_n,
\end{equation}
where $\delta_n = \Exp{\Omega(t_n)} \qp{ \sqrt{2\epsilon(E(t_n))}
\xi_n^S } - \Omega(t_{n+1})$ denotes the local discretisation
error. The first term is controlled via Lipschitz continuity of the
exponential map in both base point and argument
\begin{equation}
  \norm{
    \Exp{\Omega_n} \qp{
      \sqrt{2\epsilon(E_n)} \xi_n^S}
    -
    \Exp{\Omega(t_n)} \qp{
      \sqrt{2\epsilon(E(t_n))} \xi_n^S} }
  \leq
  C \qp{ \norm{\Omega_n - \Omega(t_n)} + \norm{E_n - E(t_n)}} \norm{ \xi_n^S}.
\end{equation}
Taking expectations and using $\norm{ \xi_n^S} = O(h^{1/2})$ in mean
square, and $\norm{E_n - E(t_n)} = O(h^{1/2})$, we obtain
\begin{equation}
  \mathbb{E} \qb{ \norm{\Omega_{n+1} - \Omega(t_{n+1})}^2 }
  \leq
  (1 + C h) \mathbb{E} \qb{ \norm{\Omega_n - \Omega(t_n)}^2 } + C h^2.
\end{equation}
By Gr\"onwall's inequality, this implies
\begin{equation}
  \mathbb{E} \qb{ \sup_{0 \leq n \leq N} \norm{\Omega_n - \Omega(t_n)}^2 } \leq C h.
\end{equation}

Finally, the position update is deterministic and, upon noting
\begin{equation}
  X(t_{n+1}) = X(t_n) + \int_{t_n}^{t_{n+1}} \Omega(s) \d s,
\end{equation}
subtracting and taking norms,
\begin{equation}
  \norm{X_{n+1} - X(t_{n+1})}
  \leq
  \norm{X_n - X(t_n)}
  +
  h \norm{\Omega_n - \Omega(t_n)}
  +
  \norm{ \int_{t_n}^{t_{n+1}} \qp{ \Omega(t_n) - \Omega(s) } \d s }.
\end{equation}
The last term is $O(h^{3/2})$ in mean square due to H\"older
continuity of $\Omega_t$. Using the previous bound on the angular
error, we conclude
\begin{equation}
  \mathbb{E} \qb{ \sup_{0 \leq n \leq N} \norm{X_n - X(t_n)}^2} \leq C h^2,
\end{equation}
completing the proof.
\end{proof}

\begin{remark}[Structure preservation under the geometric Euler scheme]
  The log-transformed geometric Euler scheme \eqref{eq:LogEulerExp}
  preserves the essential constraints of the continuous model
  \eqref{eq:LogSDE}. Since the energy is evolved through the logarithmic
  variable, the reconstructed iterates satisfy
  \begin{equation}
    E_n=\exp\qp{Y_n}>0
    \qquad\text{for all } n,
  \end{equation}
  so positivity is maintained without projection or truncation. The
  angular update is performed by the exponential map on
  $\mathbb{S}^2$, and therefore
  \begin{equation}
    \Omega_n\in\mathbb{S}^2
    \qquad\text{for all } n,
  \end{equation}
  by construction. Thus the numerical trajectory remains in the
  physically admissible state space throughout the computation. These
  are the principal structural features required for the finite-time
  pathwise approximation considered in this paper.
\end{remark}

\subsection{Illustrative Numerical Example - Angular Drift in Two Dimensions}

To highlight the structural deficiencies of the naive Euler scheme, we
consider angular diffusion on the unit circle $\mathbb{S}^1$,
discretised over a long time interval. We compare the standard Euler
update
\begin{equation}
  \Omega_{n+1}
  =
  \Omega_n\qp{1 - \epsilon h} + \sqrt{2\epsilon h} \xi_n, \quad \xi_n \sim \mathcal{N}(0, I_2),
\end{equation}
applied both with and without explicit renormalisation, against the
structure-preserving update based on the exponential map. In
two dimensions, this corresponds to a rotation
\begin{equation}
  \Omega_{n+1}
  =
  \operatorname{Rot}(\theta_n) \Omega_n,
  \quad
  \theta_n = \sqrt{2\epsilon h}   \lambda_n,
  \quad
  \lambda_n \sim \mathcal{N}(0,1).
\end{equation}
The naive Euler scheme accumulates norm error and introduces bias in
the angular distribution, particularly in long-time simulations. In
contrast, the exponential map method preserves the unit-norm
constraint exactly and reproduces the expected uniform distribution in
angle.

Figure~\ref{fig:angle_drift} shows representative trajectories and
histograms of sampled angles after $N = 10^5$ steps with time step $h
= 10^{-2}$. The difference in long-time behaviour is clearly visible.

\section{Higher-Order Structure-Preserving Schemes}
\label{sec:high-order}

For applications requiring higher accuracy, we now construct a strong
order-1 structure-preserving schemes for both the angular and energy
components. The ingredients are standard higher-order tools adapted
here to the coupled proton transport system. For the energy dynamics,
we apply a Milstein discretisation to the log-transformed variable
$Y_t=\log\qp{E_t}$, ensuring positivity while achieving strong $O(h)$
convergence. The angular integrator is then based on the
Runge--Kutta--Munthe--Kaas (RKMK) framework, which lifts the dynamics
into the Lie algebra, performs a higher-order update and maps back to
the sphere via the exponential map
\cite{muniz2022higher,iserles2000lie}. These two components are
combined into a fully structure-preserving scheme for the coupled
system.

\subsection{Log-Energy Milstein Scheme}

The evolution of energy follows the stochastic differential equation
\begin{equation}
  \label{eq:energyhigh}
  \d E_t = -S(E_t) \d t + \sqrt{T(E_t)} \d W_t^E.
\end{equation}
To improve numerical accuracy while preserving the positivity of
energy, we apply a second-order Milstein discretisation to the
logarithmic variable $Y_t = \log\qp{E_t}$. This approach avoids the
need for artificial projection and ensures that the reconstructed
energy $E_t = \exp\qp{Y_t}$ remains strictly positive.

Applying Itô's lemma, the log-transformed SDE takes the form
\begin{equation}
  \d Y_t
  =
  \qp{
    -S(\exp\qp{Y_t}) \exp\qp{-Y_t}
    - \tfrac{1}{2} T(\exp\qp{Y_t}) \exp\qp{-2Y_t}
  } \d t
  +
  \sqrt{T(\exp\qp{Y_t})} \exp\qp{-Y_t} \d W_t^E.
\end{equation}
Applying the Milstein scheme to this SDE yields the update
\begin{equation}
  \label{eq:milstein-log}
  \begin{split}
    Y_{n+1}
    &=
    Y_n
    -
    h S(\exp\qp{Y_n}) \exp\qp{-Y_n}
    -
    \tfrac{h}{2} T(\exp\qp{Y_n}) \exp\qp{-2Y_n}
    +
    \sqrt{T(\exp\qp{Y_n})} \exp\qp{-Y_n}  \xi_n^E
    \\
    &\qquad + \frac{1}{2} \qp{ \frac{T'(\exp\qp{Y_n})\exp\qp{-Y_n}}{2} - T(\exp\qp{Y_n})\exp\qp{-2Y_n}} \qp{ (\xi_n^E)^2 - h },
  \end{split}
\end{equation}
where $\xi_n^E \sim \mathcal{N}(0, h)$.  

\begin{lemma}[Positivity and strong order $1$ for the log--energy Milstein scheme]
\label{lem:milstein-log-energy}
Let $E_t$ solve \eqref{eq:energyhigh} on $[0,T]$ with initial
$E_0>E_{\min}>0$ and impose killing at $E_{\min}$.  Assume $S,T\in
C^2([E_{\min},\infty))$, $T(E)\ge c_0>0$ on $[E_{\min},\infty)$.  Let
    $Y_t=\log E_t$ and let $Y_{n}$ be defined by the Milstein update
    \eqref{eq:milstein-log} with stepsize $h=T/N$, and set
    $E_n=\exp(Y_n)$.  Then, $E_n>0$ for all $n$, and there exists a
    constant $C$ (independent of $h$) such that
    \begin{equation}
      \mathbb{E}\qb{\sup_{0\le n\le N} \norm{E_n - E(t_n)}^2} \le C h^2 .
    \end{equation}    
    In particular, $E_n$ converges to $E(t_n)$ with strong order $1$.
\end{lemma}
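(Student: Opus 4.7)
The plan is to reduce the strong error estimate to the classical strong-order-one theorem for the Milstein scheme applied to the log-energy SDE, and then transfer the resulting bound on $Y_n$ back to $E_n$ via the exponential map. Positivity is immediate: the update \eqref{eq:milstein-log} produces $Y_n \in \mathbb{R}$ by construction from real-valued data, so $E_n = \exp\qp{Y_n} > 0$ deterministically, independently of $h$ or the realisation of the noise.

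For the strong error, I first record the log-energy SDE in the form $\d Y_t = \mu(Y_t)\,\d t + \sigma(Y_t)\,\d W_t^E$ with drift and diffusion
\begin{equation*}
  \mu(y) = -S(\exp\qp{y})\exp\qp{-y} - \tfrac12 T(\exp\qp{y})\exp\qp{-2y},
  \qquad
  \sigma(y) = \sqrt{T(\exp\qp{y})}\,\exp\qp{-y}.
\end{equation*}
The assumptions $S, T \in C^2$ and $T \ge c_0 > 0$ give $\mu, \sigma \in C^2([\log E_{\min}, M])$ with derivatives bounded uniformly on each compact slab. A short chain-rule computation confirms that $\tfrac12 \sigma(y) \sigma'(y) = \tfrac14 T'(\exp\qp{y})\exp\qp{-y} - \tfrac12 T(\exp\qp{y})\exp\qp{-2y}$, which matches exactly the Milstein correction written in \eqref{eq:milstein-log}.

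I would then localise with the stopping time $\tau_M = \inf\{t : Y_t \ge M\} \wedge \tau$. On $[0, T \wedge \tau_M]$ the coefficients are $C^2$ with bounded derivatives, so the classical strong Milstein theorem (e.g.\ Kloeden--Platen, Thm.~10.3.5) gives a strong-order-one estimate on the stopped scheme with a constant $C(M)$. The transfer to $E_n$ uses the elementary bound $|\exp\qp{a} - \exp\qp{b}| \le \exp\qp{\max(a,b)}|a-b|$, which on the localising event yields $\norm{E_n - E(t_n)}^2 \le \exp\qp{2M}\norm{Y_n - Y(t_n)}^2$. To remove the stopping I would establish a uniform-in-$h$ exponential moment bound on the discrete iterates, specifically $\sup_{h>0}\mathbb{E}\qb{\sup_n \exp\qp{pY_n}} < \infty$ for some $p > 2$; Cauchy--Schwarz then combines this with the localised estimate to give the claimed $O(h^2)$ bound on $E_n - E(t_n)$.

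The main obstacle is precisely this last moment bound. Its continuous-time counterpart $\mathbb{E}\qb{\sup_t \exp\qp{p Y_t}} < \infty$ follows from It\^o's formula together with Gr\"onwall, since $S > 0$ forces the energy to decrease on average and $T$ has at most polynomial growth. For the Milstein scheme the same conclusion should hold via a discrete Lyapunov argument that exploits dissipativity of $\mu$ for large $y$ and boundedness of the Milstein correction on each compact slab, but it requires careful tracking of the $(\xi_n^E)^2 - h$ term, which is the only source of potential upward excursions in $Y_n$ and must be controlled uniformly in $h$.
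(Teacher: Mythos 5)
Your proposal follows essentially the same route as the paper: positivity by construction, identification of the log-energy drift $\mu$ and diffusion $\sigma$, appeal to the classical Milstein theorem for the $Y$-error, and transfer to $E_n=\exp(Y_n)$ via the mean value theorem for the exponential. The one structural difference is your upper localisation at $\tau_M=\inf\{t: Y_t\ge M\}$. The paper avoids this by arguing that, on the live domain $[\log E_{\min},\infty)$ guaranteed by the killing boundary, the transformed coefficients are globally $C_b^2$ (for Bragg--Kleeman one has $\mu(y)\propto e^{-py}$ and $\sigma(y)\propto e^{-py/2}$, which decay as $y\to+\infty$), so the classical theorem applies without truncation and the constant does not depend on an auxiliary level $M$. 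Your localisation is therefore avoidable, and it is precisely what forces you to remove it again via the exponential moment bound. That said, the step you flag as the main obstacle --- a uniform-in-$h$ bound of the form $\sup_{h}\mathbb{E}\bigl[\sup_n \exp(pY_n)\bigr]<\infty$ --- is not dispensed with in the paper either: the published proof needs exactly this control to justify $\exp(\zeta_n)\le C$ in the mean-value-theorem transfer, and asserts it with only a parenthetical reference to ``a moment bound for $E$''. Completing it along the discrete Lyapunov lines you sketch would make the argument more rigorous than what is written in the paper; in either version the final passage to $\mathbb{E}\bigl[\sup_n |E_n-E(t_n)|^2\bigr]$ also requires a Cauchy--Schwarz step combining that moment bound with a higher moment of the $Y$-error, which neither account spells out.
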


\begin{proof}
  
Positivity is immediate since $E_n=\exp(Y_n)$. Write the log-energy
SDE as
\begin{equation}
  \begin{split}
    \d Y_t
    &=
    \mu(Y_t) \d t
    +
    b(Y_t) \d W_t^E,
    \\
    \mu(y)
    &=
    -S(\exp\qp y)\exp\qp{-y}
    -
    \tfrac{1}{2} T(\exp\qp y)\exp\qp{-2y}
    \\
    b(y) &= \sqrt{T(\exp\qp y)} \exp\qp{-y}.
  \end{split}
\end{equation}
Because $E_t\in[E_{\min},\infty)$ up to the killing time and $S,T\in
  C^2([E_{\min},\infty))$ with bounded derivatives, the compositions
    $\mu,b\in C_b^2(\mathbb{R})$ (globally Lipschitz with bounded
    first and second derivatives).  Hence the classical Milstein
    convergence theorem (see, e.g.,
    \cite[Thm.~10.6.2]{kloeden1992stochastic}) applies to the scheme
    \eqref{eq:milstein-log}, giving
    \begin{equation}
      \mathbb{E}\qb{\sup_{0\le n\le N} \norm{Y_n - Y(t_n)}^2} \le C h^2 .
    \end{equation}
    To transfer this estimate to $E=\exp(Y)$, use the mean value
    theorem. For each $n$ there exists $\zeta_n$ between $Y_n$ and
    $Y(t_n)$ such that
    \begin{equation}
      \norm{E_n - E(t_n)} 
      =
      \norm{\exp\qp{Y_n} - \exp\qp{Y(t_n)}}
      =
      \exp\qp{\zeta_n}\norm{Y_n - Y(t_n)}.
    \end{equation}    
    Since $E(t)\ge E_{\min}$ up to killing and $E_n=\exp(Y_n)>0$, we
    have $\exp\qp{\xi_n}\le C$ uniformly on $[0,T]$ (the constant depends
    on $E_{\min}$ and on a moment bound for $E$ but not on
    $h$). Squaring and taking the supremum over $n$,
    \begin{equation}
      \sup_{0\le n\le N} \norm{E_n - E(t_n)}^2
      \le
      C \sup_{0\le n\le N} \norm{Y_n - Y(t_n)}^2 .
    \end{equation}
    Taking expectations and combining with the Milstein bound for $Y$
    yields the desired result.
\end{proof}

\subsection{Lie-Group Angular Integrator}

Let $\{U_\alpha\}_{\alpha=1}^M$ be a finite smooth atlas of
$\mathbb{S}^2$. On each chart $U_\alpha$, choose a smooth oriented
orthonormal frame $\{u_{\alpha,1},u_{\alpha,2}\}$ of the tangent
bundle. The local vector fields are then defined by
\begin{equation}
  \sigma_{\alpha,i}(\Omega)
  :=
  u_{\alpha,i}(\Omega)\times\Omega,
  \qquad i=1,2.
\end{equation}
The numerical update is implemented extrinsically, so no global frame
is required in the algorithm itself; the local frames are used only
for the local representation and analysis.

On a fixed chart $U_\alpha$, we write $\sigma_i=\sigma_{\alpha,i}$ and
$u_i=u_{\alpha,i}$ for brevity. The angular dynamics then take the
local Stratonovich form
\begin{equation}
  \d\Omega_t
  =
  \sqrt{2\epsilon(E_t)}
  \qp{ \sigma_1(\Omega_t)\circ\d W_t^{(1)}
    + \sigma_2(\Omega_t)\circ\d W_t^{(2)}},
\end{equation}
where $W^{(1)},W^{(2)}$ are independent standard Brownian motions.
The fields $\sigma_1,\sigma_2$ are smooth, tangent and satisfy the
local commutator identity
\begin{equation}
  [\sigma_1,\sigma_2](\Omega)= - \Omega,
\end{equation}
so they do not commute. This bracket (a rotation about the normal
axis) is the source of the L\'evy--area correction in strong
order--$1$ integrators.

Writing $\Omega=(\sin\theta\cos\phi,\sin\theta\sin\phi,\cos\theta)$, a
standard orthonormal frame (smooth away from the poles) is
\begin{equation}
  u_1(\Omega)=(\cos\theta\cos\phi, \cos\theta\sin\phi, -\sin\theta),
  \qquad
  u_2(\Omega)=(-\sin\phi,\cos\phi,0).
\end{equation}

Over one step $[t_n,t_{n+1}]$ we first approximate the diffusion clock increment
\begin{equation}
  \label{eq:diff-clock}
  \Delta\gamma_n
  :=
  h\big(\epsilon(E_n)+\epsilon(E_{n+1})\big)
  \approx
  2\int_{t_n}^{t_{n+1}} \epsilon(E_s) \d s
  =:
  \Delta\gamma_n^{\mathrm{ex}}
\end{equation}
which acts as the process's internal time. 

Now draw
\begin{equation}
  (\Delta W_n^{(1)},\Delta W_n^{(2)})
  \sim
  \mathcal N(0,\Delta\gamma_n I_2)
\end{equation}
together with the scalar L\'evy area
\begin{equation}
  A_n
  :=
  \frac12\int_{t_n}^{t_{n+1}}
  \big(\d W_s^{(1)} \d W_s^{(2)}-\d W_s^{(2)} \d W_s^{(1)}\big),
\end{equation}
sampled jointly with the Brownian increments using the Wiktorsson
construction for two-dimensional Brownian motion. In the present
two-dimensional driving setting, only a single scalar L\'evy-area
variable is required at each step. Thus the higher-order angular
update introduces only constant additional work per timestep relative
to the geometric Euler method. We note that $A_n$ has mean $0$ and
variance $\Delta\gamma_n^2/12$.

Define the Lie algebra increment
\begin{equation}
  \label{eq:algebra-el}
  \xi_n
  :=
  \Delta W_n^{(1)} u_1(\Omega_n)
  + \Delta W_n^{(2)} u_2(\Omega_n)
  - A_n\Omega_n.
\end{equation}

Finally, perform the angular update as a rotation
\begin{equation}
  \label{eq:omega-update}
  \Omega_{n+1} = \Exp{\Omega_n}(\xi_n).
\end{equation}

\begin{lemma}[Angular strong order $1$]
\label{lem:ang-order1}
Assume the conditions of Lemma \ref{lem:milstein-log-energy} hold.
Suppose the diffusion clock (\ref{eq:diff-clock}) is approximated by
an adapted quadrature based on the Milstein energy such that
\begin{equation}
  \mathbb{E}\qb{\norm{\Delta\gamma_n-\Delta\gamma_n^{\mathrm{ex}}}^2}
  =
  O(h^3).
\end{equation}
With the Lie Algebra element defined by (\ref{eq:algebra-el}) and the
update (\ref{eq:omega-update}), there exists $C>0$ independent of $h$
such that
\begin{equation}
  \mathbb{E}\qb{\sup_{0\le n\le N}\norm{\Omega_n-\Omega(t_n)}^2} \le C h^2.
\end{equation}
\end{lemma}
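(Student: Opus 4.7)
The plan is a stochastic Runge--Kutta--Munthe--Kaas convergence argument: establish a local truncation error of order $h^{3/2}$ in $\leb 2$ and then close by a discrete Gronwall estimate. The Lévy area term in \eqref{eq:algebra-el} supplies the correction needed to match the second-order Stratonovich--Taylor expansion in the presence of the non-commuting vector fields $\sigma_1,\sigma_2$.

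I would first pass to a time-changed representation $\Omega(t) = \widetilde B_{\Gamma(t)}$, where $\widetilde B$ is a Brownian motion on $\mathbb S^2$ generated by the pair $(\sigma_1,\sigma_2)$ and $\Gamma(t) = 2\int_0^t \epsilon(E_s)\,\d s$. The exact clock increment equals $\Delta\gamma_n^{\mathrm{ex}}$, which is approximated by $\Delta\gamma_n$ with $\leb 2$ error $O(h^{3/2})$ by hypothesis. Writing $\widetilde\xi_n$ for the algebra increment \eqref{eq:algebra-el} built with base point $\Omega(t_n)$ and the exact clock, I split the one-step error as
\begin{equation}
  \Omega_{n+1} - \Omega(t_{n+1})
  =
  \qp{ \Exp{\Omega_n}(\xi_n) - \Exp{\Omega(t_n)}(\widetilde\xi_n) }
  +
  \qp{ \Exp{\Omega(t_n)}(\widetilde\xi_n) - \Omega(t_{n+1}) }.
\end{equation}

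The second bracket is the classical one-step truncation error of an RKMK--Lévy--area integrator for a constant-clock Brownian motion on $\mathbb S^2$. A Stratonovich--Taylor expansion to order $3/2$ produces three terms: the linear Brownian contribution, the Lévy--area contribution $A_n [\sigma_1,\sigma_2] = -A_n\Omega(t_n)$ from the non-commutativity of the frame, and a curvature--drift term absorbed automatically by the Riemannian exponential map. All three are reproduced exactly by \eqref{eq:algebra-el}--\eqref{eq:omega-update}, so the residual is of order $(\Delta\gamma_n^{\mathrm{ex}})^{3/2} = O(h^{3/2})$ in $\leb 2$; compactness of $\mathbb S^2$ provides uniform geometric constants. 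The first bracket is a stability term. Smoothness of $\Exp{\cdot}$ in base point and argument, together with coupling of the Brownian increments through the underlying $\widetilde B$, bounds it by
\begin{equation}
  C \qp{ \norm{\Omega_n - \Omega(t_n)} \qp{1 + \norm{\xi_n}} + \norm{\xi_n - \widetilde\xi_n} },
\end{equation}
where $\norm{\xi_n - \widetilde\xi_n}_{\leb 2}$ is controlled by the clock mismatch $O(h^{3/2})$ together with $\norm{E_n - E(t_n)}_{\leb 2} = O(h)$ from Lemma~\ref{lem:milstein-log-energy}.

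Squaring, taking expectation and using $\mathbb E \norm{\xi_n}^2 = O(h)$ and $\mathbb E A_n^2 = O(h^2)$ yields the recursion
\begin{equation}
  \mathbb E \norm{\Omega_{n+1} - \Omega(t_{n+1})}^2
  \le
  (1 + C h) \mathbb E \norm{\Omega_n - \Omega(t_n)}^2
  +
  C h^3,
\end{equation}
whose discrete Gronwall solution is $O(h^2)$. A Doob maximal inequality applied to the martingale part of the accumulated local errors then promotes this to the uniform supremum bound. The main obstacle will be the Stratonovich--Taylor expansion itself: one must verify that the Riemannian exponential reproduces the curvature and Itô--Stratonovich correction so that the only uncompensated second-order Brownian term is the Lévy--area contribution, with no unmatched drift of order $h$ remaining. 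This requires careful bookkeeping of covariant derivatives of $\sigma_1$ and $\sigma_2$; once the algebraic matching is established, the analysis reduces to a Lie-group analogue of the classical Milstein strong-order-one argument.
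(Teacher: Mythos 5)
Your proposal is correct and follows essentially the same route as the paper's proof: a time change to a constant-speed spherical Brownian motion, a Stratonovich--Taylor expansion in which the L\'evy area matches the commutator $[\sigma_1,\sigma_2]=-\Omega$, an expansion of the exponential-map update to the same order, a splitting into stability plus local truncation terms fed into a $(1+Ch)$--recursion with $O(h^3)$ forcing, and finally discrete Gr\"onwall with a Doob maximal inequality. The paper carries out the exponential-map matching you flag as the main obstacle by writing the update as $\exp([\xi_n]_\times)\Omega_n$ and invoking the Baker--Campbell--Hausdorff series, which is precisely the bookkeeping your plan calls for.
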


\begin{proof}
  Define the clock $\gamma(t)=2\int_0^t\epsilon(E_s)\d s$ and let
  $s_n=\gamma(t_n)$, so
  $\Delta\gamma_n^{\mathrm{ex}}=s_{n+1}-s_n$. Consider the
  time-changed process $\widetilde\Omega_s:=\Omega_{\gamma^{-1}(s)}$.
  By the time-change theorem for Stratonovich SDEs, $\widetilde\Omega$
  solves on $[s_n,s_{n+1}]$
  \begin{equation}
    \d\widetilde\Omega_s
    =
    \sigma_1(\widetilde\Omega_s)\circ \d B_s^{(1)}
    +
    \sigma_2(\widetilde\Omega_s)\circ \d B_s^{(2)},
  \end{equation}  
  where $B^{(1)},B^{(2)}$ are independent Brownian motions in the
  clock $s$. Here $\sigma_1$ and $\sigma_2$ denote the local vector
  fields on a fixed chart, with the chart index suppressed for
  readability. All Taylor and Lipschitz estimates are carried out in
  local charts. Compactness of $\mathbb{S}^2$ and a finite atlas give
  uniform constants. Let $\Delta
  B_n^{(i)}=B_{s_{n+1}}^{(i)}-B_{s_n}^{(i)}$ and
  $A_n^{\mathrm{ex}}=\frac12\int_{s_n}^{s_{n+1}}\big(\d B_s^{(1)}\d
  B_s^{(2)}-\d B_s^{(2)}\d B_s^{(1)}\big)$. On each local chart, the
  time-changed equation is driven by smooth bounded local vector
  fields, so the Stratonovich stochastic Taylor expansion applies
  locally. The resulting constants are uniform over the finite atlas
  yielding
  \begin{equation}
    \widetilde\Omega_{s_{n+1}}
    =
    \widetilde\Omega_{s_n}
    + \sum_{i=1}^2 \sigma_i(\widetilde\Omega_{s_n}) \Delta B_n^{(i)}
    + [\sigma_1,\sigma_2](\widetilde\Omega_{s_n}) A_n^{\mathrm{ex}}
    + R_n^{\mathrm{ex}},
    \label{eq:ST}
  \end{equation}
  with a remainder satisfying
  \begin{equation}
    \mathbb{E}\qb{\norm{R_n^{\mathrm{ex}}}^2}
    \le
    C (\Delta\gamma_n^{\mathrm{ex}})^3.
  \end{equation}  
  Here all coefficients and their derivatives are bounded because
  $\mathbb{S}^2$ is compact and terms of weight $\ge 3/2$ in the
  Stratonovich hierarchy contribute
  $O\big((\Delta\gamma_n^{\mathrm{ex}})^{3/2}\big)$ in RMS, hence
  $O\big((\Delta\gamma_n^{\mathrm{ex}})^3\big)$ in mean
  square. Moreover, with the oriented frame $u_1\times u_2=\Omega$ one
  has $[\sigma_1,\sigma_2](\Omega)=-\Omega$.

  Now, freeze the frame at $\Omega_n$ and form
  \begin{equation}
    \xi_n=\Delta W_n^{(1)}u_1(\Omega_n)+\Delta W_n^{(2)}u_2(\Omega_n)-A_n \Omega_n,
  \end{equation}
  with $(\Delta W_n^{(1)},\Delta W_n^{(2)},A_n)$ sampled over duration
  $\Delta\gamma_n$.  For small $\xi$, the Riemannian exponential on
  $\mathbb{S}^2$ admits the expansion
  \begin{equation}
    \Exp{\Omega_n}(\xi) = \Omega_n + \sigma_1(\Omega_n)\Delta W_n^{(1)}
    + \sigma_2(\Omega_n)\Delta W_n^{(2)}
    + [\sigma_1,\sigma_2](\Omega_n)A_n
    + \widehat R_n,
  \end{equation}
  where $\mathbb{E}\qb{\norm{\widehat R_n}^2}\le C
  (\Delta\gamma_n)^3$.  This follows by writing the update as the
  rotation $\exp([\xi_n]_\times)\Omega_n$ (with $[\cdot]_\times$ the
  skew matrix), expanding the flow map in the Lie algebra via the
  Baker--Campbell--Hausdorff series and using
  $\norm{\xi_n}=O\qp{(\Delta\gamma_n)^{1/2}}$ in RMS and the bracket
  identity $[\sigma_1,\sigma_2](\Omega_n)=-\Omega_n$. The cubic bound
  on $\widehat R_n$ in mean square is a consequence of the smoothness
  of the exponential map on the compact manifold and the boundedness
  of the fields and their derivatives.

  We now compare the exact clock increments $(\Delta
  B_n^{(i)},A_n^{\mathrm{ex}})$ over $\Delta\gamma_n^{\mathrm{ex}}$
  with the simulated increments $(\Delta W_n^{(i)},A_n)$ over
  $\Delta\gamma_n$. By construction
  \begin{equation}
    \mathbb{E}\qb{\norm{\Delta W_n-\Delta B_n}^2}
    =
    \mathbb{E}\qb{\norm{\Delta\gamma_n-\Delta\gamma_n^{\mathrm{ex}}}^2}=O(h^{3/2})
  \end{equation}
  in RMS, hence $O(h^3)$ in mean square. Similarly,
  $\mathbb{E}\qb{\norm{A_n-A_n^{\mathrm{ex}}}^2}=O(h^3)$ because
  $\operatorname{Var}(A)=\Delta\gamma^2/12$ and the variance error is
  $O(h^3)$. Since the coefficients $\sigma_i$ and the bracket are
  bounded, replacing $(\Delta B_n^{(i)},A_n^{\mathrm{ex}})$ by
  $(\Delta W_n^{(i)},A_n)$ in the expansions \eqref{eq:ST} and in the
  numerical update produces a mean-square perturbation of order
  $O(h^3)$.

  Combining the above and identifying
  $\widetilde\Omega_{s_n}=\Omega(t_n)$, the one-step error in clock
  time satisfies
  \begin{equation}
    \mathbb{E}\qb{
      \norm{\Omega_{n+1}-\Omega(t_{n+1})}^2
      \big|  \mathcal F_{t_n}}
    \le (1+C h)\|\Omega_n-\Omega(t_n)\|^2 + C h^3,
  \end{equation}
since $\Delta\gamma_n^{\mathrm{ex}}=O(h)$ and all Lipschitz constants are uniform
(on the compact manifold). Taking expectations and iterating yields
\begin{equation}
  \mathbb{E}\qb{\norm{\Omega_{n+1}-\Omega(t_{n+1})}^2
  }
  \le
  (1+Ch) \mathbb{E}\qb{\norm{\Omega_n-\Omega(t_n)}^2} + C h^3.
\end{equation}
By the discrete Gr\"onwall lemma this gives
$\mathbb{E}\qb{\norm{\Omega_n-\Omega(t_n)}^2 } \le C h^2$ uniformly in
$n$.  A standard Doob-Kolmogorov argument then yields the desired result.
\end{proof}

\begin{theorem}[Strong convergence of the full scheme]
\label{thm:strong-rkmk}
Suppose the conditions of Lemmata \ref{lem:milstein-log-energy} and
\ref{lem:ang-order1} hold. Let the tuple $(X_t,Y_t,\Omega_t)$ solve
the SDE system (\ref{eq:LogSDE}) on $[0,T]$, with killing at
$E_{\min}>0$. Let $(X_n,Y_n,\Omega_n)_n$ be the numerical approximation
given by
\begin{equation}
  \label{eq:LogMilsteinExp}   
  \begin{split}
    X_{n+1} &= X_n + h  \Omega_n,
    \\
    Y_{n+1}
    &=
    Y_n
    -
    h S(\exp\qp{Y_n}) \exp\qp{-Y_n}
    -
    \tfrac{h}{2} T(\exp\qp{Y_n}) \exp\qp{-2Y_n}
    +
    \sqrt{T(\exp\qp{Y_n})} \exp\qp{-Y_n}  \xi_n^E
    \\
    &\qquad + \frac{1}{2} \qp{ \frac{T'(\exp\qp{Y_n})\exp\qp{-Y_n}}{2} - T(\exp\qp{Y_n})\exp\qp{-2Y_n}} \qp{ (\xi_n^E)^2 - h },
    \\
    \Omega_{n+1} &= \Exp{\Omega_n}(\xi_n),
  \end{split}
\end{equation}
with $\xi_n$ prescribed by (\ref{eq:algebra-el}) and $E_n = \exp{Y_n}$.

Then, there exists $C>0$ independent of $h$ such that
\begin{equation}
  \mathbb{E}
  \qb{
    \sup_{0\le n\le N}\qp{
      \norm{X_n-X(t_n)}^2
      +
      \norm{E_n-E(t_n)}^2
      +
      \norm{\Omega_n-\Omega(t_n)}^2 }
  }
  \le C h^2.
\end{equation}
\end{theorem}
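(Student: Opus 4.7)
My proof plan exploits the triangular dependency structure of the scheme (\ref{eq:LogMilsteinExp}): the log-energy $Y_n$ evolves autonomously, the angular iterate $\Omega_n$ depends on the state only through the diffusion clock built from $E_n = \exp(Y_n)$, and the position $X_n$ depends only on $\Omega_n$. I intend to apply Lemmas~\ref{lem:milstein-log-energy} and~\ref{lem:ang-order1} directly to obtain strong order--one bounds on the first two components, then carry out a separate argument for $X$ that exploits the noise-free nature of $\d X_t = \Omega_t \d t$, and finally combine the three estimates additively.

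Lemma~\ref{lem:milstein-log-energy} yields $\mathbb{E}\qb{\sup_{n\le N} \norm{E_n - E(t_n)}^2} \le C h^2$ immediately. To invoke Lemma~\ref{lem:ang-order1} I first verify its clock hypothesis, namely that (\ref{eq:diff-clock}) satisfies $\mathbb{E}\qb{\norm{\Delta\gamma_n - \Delta\gamma_n^{\mathrm{ex}}}^2} = O(h^3)$. I would split the clock error into a trapezoidal quadrature error for the smooth integrand $s\mapsto 2\epsilon(E_s)$, which is $O(h^3)$ in mean square via a Peano kernel estimate together with moment bounds on $E_s$, and a coupling error $\epsilon(E_n)-\epsilon(E(t_n))$ controlled by Lipschitz continuity of $\epsilon$ on the killed domain and the Milstein bound on $E$. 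Both pieces are $O(h^3)$, so Lemma~\ref{lem:ang-order1} applies and delivers $\mathbb{E}\qb{\sup_{n\le N} \norm{\Omega_n - \Omega(t_n)}^2} \le C h^2$.

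For the position, set $e_n := X_n - X(t_n)$ and write
\begin{equation}
  e_{n+1} = e_n + h\bigl(\Omega_n - \Omega(t_n)\bigr) + r_n, \qquad r_n := \int_{t_n}^{t_{n+1}} \bigl(\Omega(t_n) - \Omega(s)\bigr) \d s.
\end{equation}
A Fubini exchange rewrites $r_n = -\int_{t_n}^{t_{n+1}} (t_{n+1} - u)\,\d\Omega_u$. The It\^o-drift part (the Stratonovich-to-It\^o correction, bounded on the compact manifold $\mathbb{S}^2$) contributes a bounded-variation piece of size $O(h^2)$ per step. The stochastic part yields a discrete martingale $M_n := -\sum_{k<n}\int_{t_k}^{t_{k+1}}(t_{k+1}-u)\sqrt{2\epsilon(E_u)}\,\sigma_i(\Omega_u)\,\d W_u^{(i)}$ whose quadratic variation is of order $\sum_k h^3 = O(h^2)$, so Doob's maximal inequality gives $\mathbb{E}\qb{\sup_{n\le N}\norm{M_n}^2} \le C h^2$. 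Telescoping the recursion for $e_n$, inserting the angular bound into the $h(\Omega_n-\Omega(t_n))$ term, and applying discrete Gr\"onwall then closes the estimate as $\mathbb{E}\qb{\sup_{n\le N}\norm{X_n - X(t_n)}^2} \le C h^2$.

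The hard part will be the position step. Pointwise one only has $r_n = O(h^{3/2})$ in RMS, so a crude triangle-inequality sum over $N = T/h$ steps would give $\mathbb{E}\norm{X_n - X(t_n)}^2 = O(h)$, the classical Euler--Maruyama rate $1/2$ rather than the required~$1$. The gain to strong order~$1$ comes from recognising the accumulated position error as a martingale whose variance scales as $h^2$ rather than $h$, a cancellation invisible at the single-step level but exposed by the Fubini/It\^o rewrite of $r_n$. Once this observation is in place, adding the three mean-square bounds yields the stated estimate.
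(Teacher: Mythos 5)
Your proposal is correct and follows the paper's overall architecture: invoke Lemma~\ref{lem:milstein-log-energy} for the energy, Lemma~\ref{lem:ang-order1} for the angle, and then treat the noise-free position equation separately via the decomposition $e_{n+1}=e_n+h(\Omega_n-\Omega(t_n))+r_n$. The genuine difference is in how the remainder $r_n$ is handled, and your route is the more robust one. The paper bounds $\mathbb{E}\norm{r_n}^2\le Ch^3$ by Cauchy--Schwarz and the H\"older continuity of $\Omega$, and then asserts the recursion $a_{n+1}\le(1+Ch)a_n+Ch^2\,\mathbb{E}\norm{\Omega_n-\Omega(t_n)}^2+Ch^3$ from $(a+b+c)^2\le 3(a^2+b^2+c^2)$; as you correctly observe, a purely per-step $L^2$ bound on $r_n$ cannot close at order one, since weighting the cross term $2\langle e_n,r_n\rangle$ by Young's inequality costs a factor $1/h$ and degrades the per-step remainder to $O(h^2)$, hence $O(h)$ overall. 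The missing ingredient is exactly the cancellation you expose: $r_n$ is conditionally centred up to its bounded-variation part, $\norm{\mathbb{E}[r_n\,|\,\mathcal F_{t_n}]}=O(h^2)$, so either one controls the cross term through the conditional mean (which is what the paper's recursion implicitly requires) or, as you do, one telescopes, isolates the accumulated stochastic part as a discrete martingale via the stochastic-Fubini identity $r_n=-\int_{t_n}^{t_{n+1}}(t_{n+1}-u)\,\d\Omega_u$, and applies Doob's maximal inequality to its $O(h^2)$ quadratic variation. Your version has the added benefit of delivering the supremum inside the expectation directly, and your verification that the trapezoidal clock $\Delta\gamma_n=h(\epsilon(E_n)+\epsilon(E_{n+1}))$ meets the $O(h^3)$ mean-square hypothesis of Lemma~\ref{lem:ang-order1} is a step the paper simply assumes. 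Two minor caveats: the quadrature error should be argued from the $1/2$-H\"older modulus of $s\mapsto\epsilon(E_s)$ (giving $O(h^{3/2})$ in RMS, hence $O(h^3)$ in mean square) rather than a Peano-kernel estimate, which would require pathwise $C^2$ regularity in $s$; and the final Gr\"onwall step is not actually needed in your telescoped form, since $e_n$ does not feed back into its own increment.
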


\begin{proof}
  By Lemma~\ref{lem:milstein-log-energy} we have
  \begin{equation}
    \mathbb{E}\qb{\sup_{0\le n\le N}
      \norm{E_n-E(t_n)}^2
    }\le C h^2,
  \end{equation}  
  and by Lemma~\ref{lem:ang-order1},
  \begin{equation}
    \mathbb{E}\qb{\sup_{0\le n\le N}\norm{\Omega_n-\Omega(t_n)}^2} \le C h^2.
  \end{equation}  
  It remains to bound the spatial error. Write
  \begin{equation}
    X_{n+1}-X(t_{n+1})
    =
    \qp{
      X_n-X(t_n)}
    + h\qp{\Omega_n-\Omega(t_n)}
    + \int_{t_n}^{t_{n+1}}\qp{\Omega(t_n)-\Omega(s)} \d s .
  \end{equation}
Taking norms, squaring and using $(a+b+c)^2\le 3(a^2+b^2+c^2)$,
\begin{equation}
  \norm{X_{n+1}-X(t_{n+1})}^2
  \le
  3\norm{X_n-X(t_n)}^2
  +
  3h^2\norm{\Omega_n-\Omega(t_n)}^2
  +
  3\norm{\int_{t_n}^{t_{n+1}}(\Omega(t_n)-\Omega(s)) \d s }^2.
\end{equation}
For the last term, by Cauchy–Schwarz,
\begin{equation}
  \norm{\int_{t_n}^{t_{n+1}}(\Omega(t_n)-\Omega(s))\d s}^2
  \le
  h \int_{t_n}^{t_{n+1}}\norm{\Omega(t_n)-\Omega(s)}^2 \d s .
\end{equation}
Since $\Omega$ is a diffusion with smooth coefficients on the compact
manifold $\mathbb{S}^2$, $\mathbb{E}\qb{\norm{\Omega(t_n)-\Omega(s)}}^2
  \le C \norm{s-t_n}$.  Hence
  \begin{equation}
    \mathbb{E}\qb{\norm{\int_{t_n}^{t_{n+1}}(\Omega(t_n)-\Omega(s)) \d s }^2}
    \le
    C h \int_{t_n}^{t_{n+1}} (s-t_n) \d s
    = C h^3 .
  \end{equation}  
Taking expectations in the recursive bound and summing, we get
\begin{equation}
  \mathbb{E}\qb{\norm{X_{n+1}-X(t_{n+1})}^2}
  \le
  (1+Ch) \mathbb{E}\qb{\norm{X_n-X(t_n)}^2}
  + C h^2 \mathbb{E}\qb{\norm{\Omega_n-\Omega(t_n)}^2}
  + C h^3 .
\end{equation}
Using $\mathbb{E}\qb{\sup_n\norm{\Omega_n-\Omega(t_n)}^2} \le C h^2$ from
Lemma~\ref{lem:ang-order1} and applying the discrete Gr\"onwall lemma
yields
\begin{equation}
  \mathbb{E}\qb{\sup_{0\le n\le N}\norm{X_n-X(t_n)}^2} \le C h^2 .
\end{equation}
Combining the three componentwise bounds proves the stated estimate.
\end{proof}

\begin{remark}[Structure preservation under the higher–order scheme]
  The structure preserving features of the geometric Euler method are
  retained by the higher accuracy scheme. In energy, the Milstein
  update for the log variable preserves positivity exactly and
  improves pathwise accuracy to strong order $1$ without any
  projection. In angle, the update is performed intrinsically on
  $\mathbb{S}^2$ via the exponential map, so the unit-norm constraint
  is preserved exactly at every step. For the angular accuracy
  incorporating the L\'evy-area terms yield a Milstein/Castell-Gaines
  integrator of strong order $1$.

  These refinements do not alter the geometric or physical constraints
  of the model, but they do improve convergence rates while
  maintaining fidelity to the manifold structure.  The full scheme
  therefore remains confined to the physically admissible state space
  and is well suited to finite-time pathwise simulation and to the
  sensitivity computations developed later.
\end{remark}

\subsection{Numerical Examples}

To illustrate the improved behaviour of the higher-order scheme, we
repeat the angular diffusion experiment from the previous section, now
using the second-order RKMK method. The update remains constrained to
the unit sphere but achieves higher pathwise accuracy and better
approximation of finite-time pathwise trajectories and of empirical
angular statistics. Figure~\ref{fig:angle_drift} shows the angular
trajectory and empirical histogram for $\Omega_n \in \mathbb{S}^1$
over a long simulation. Compared to the first-order geometric method,
the second-order update yields comparable geometric fidelity while
providing reduced bias in the angular statistics and improved
convergence in observable quantities.

\begin{figure}[h!]
  \centering
  \captionsetup[subfigure]{justification=centering}
  \begin{subfigure}[b]{0.45\textwidth}
    \includegraphics[width=\textwidth]{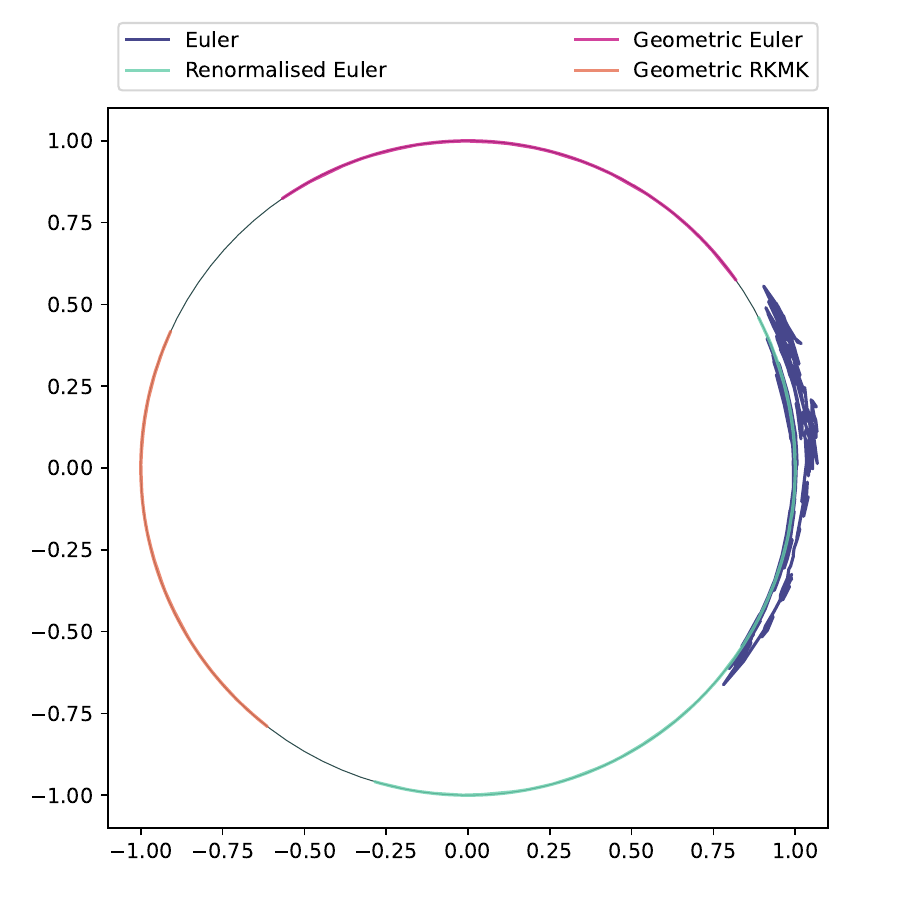}
    \caption{Trajectory on the circle over time.\newline $h=0.01,T=5,\epsilon_0=0.1$}
    \label{fig:circle_paths}
  \end{subfigure}
  \hfill
  \begin{subfigure}[b]{0.45\textwidth}
    \includegraphics[width=\textwidth]{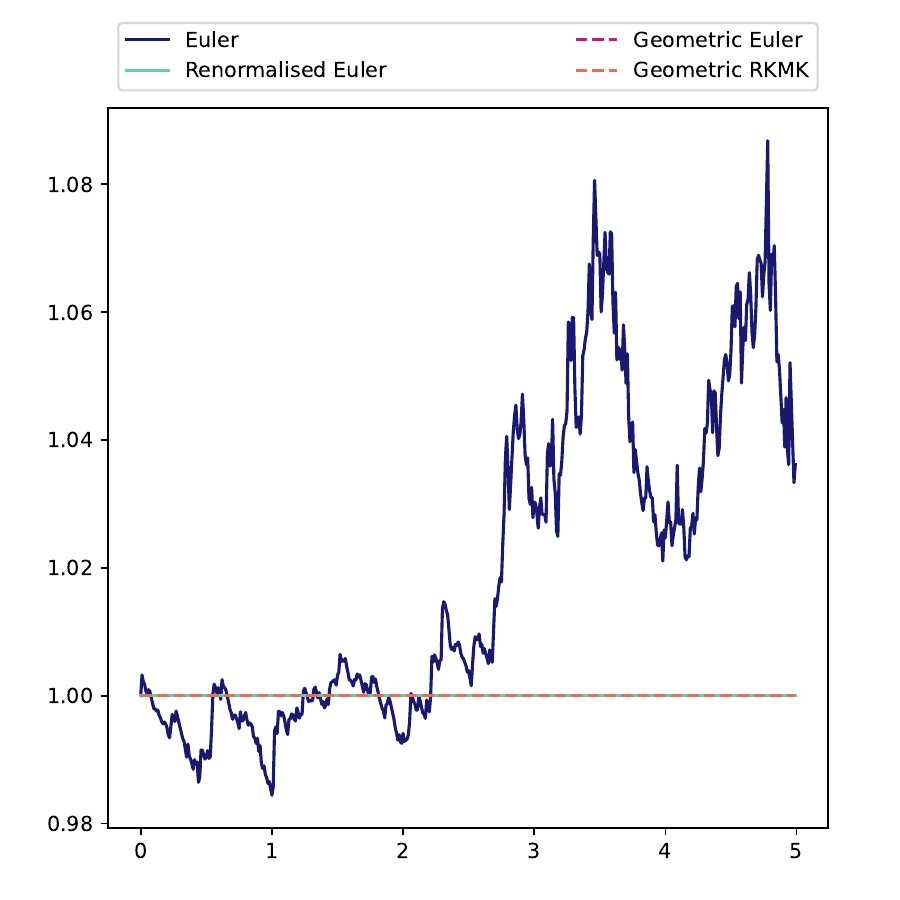}
    \caption{Norm of direction vector $\Omega_n$ over time.\newline $h=0.01,T=5,\epsilon_0=0.1$}
    \label{fig:circle_norms}
  \end{subfigure}
  \hfill
  \begin{subfigure}[b]{0.45\textwidth}
    \includegraphics[width=\textwidth]{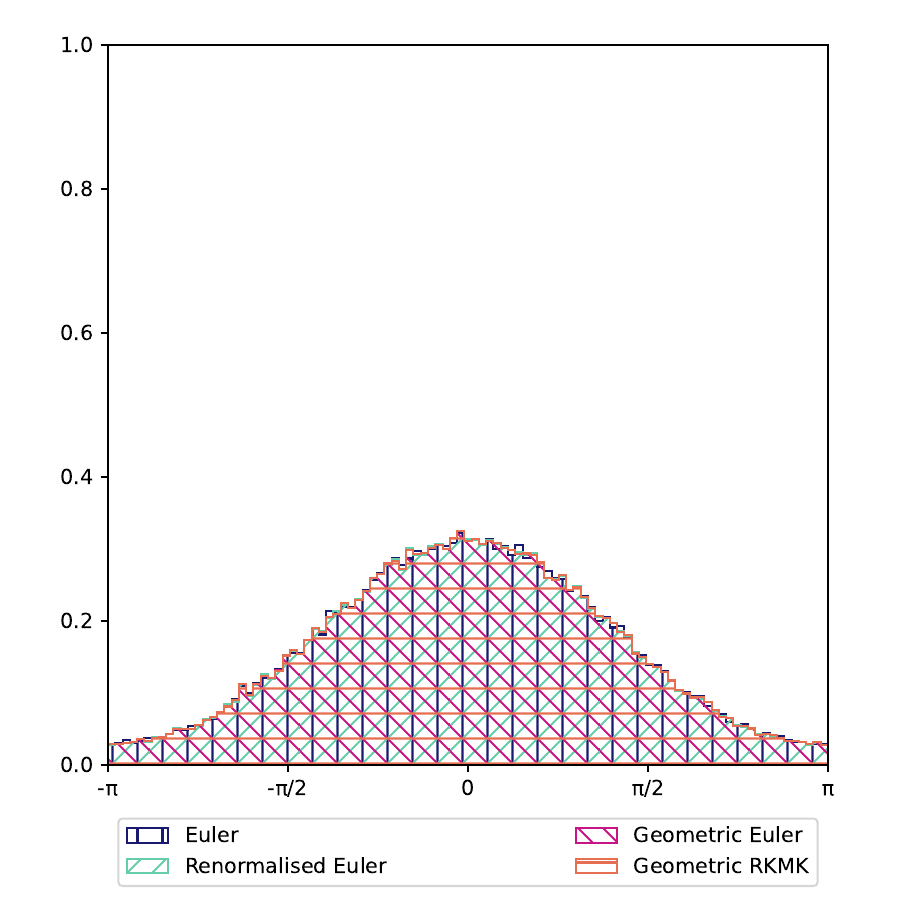}
    \caption{Histogram of angles $\arg(\Omega_n)$ at final time.\newline $h=0.01,T=8,\epsilon_0=0.1$}
    \label{fig:circle_hists}
  \end{subfigure}
  \hfill
  \begin{subfigure}[b]{0.45\textwidth}
    \includegraphics[width=\textwidth]{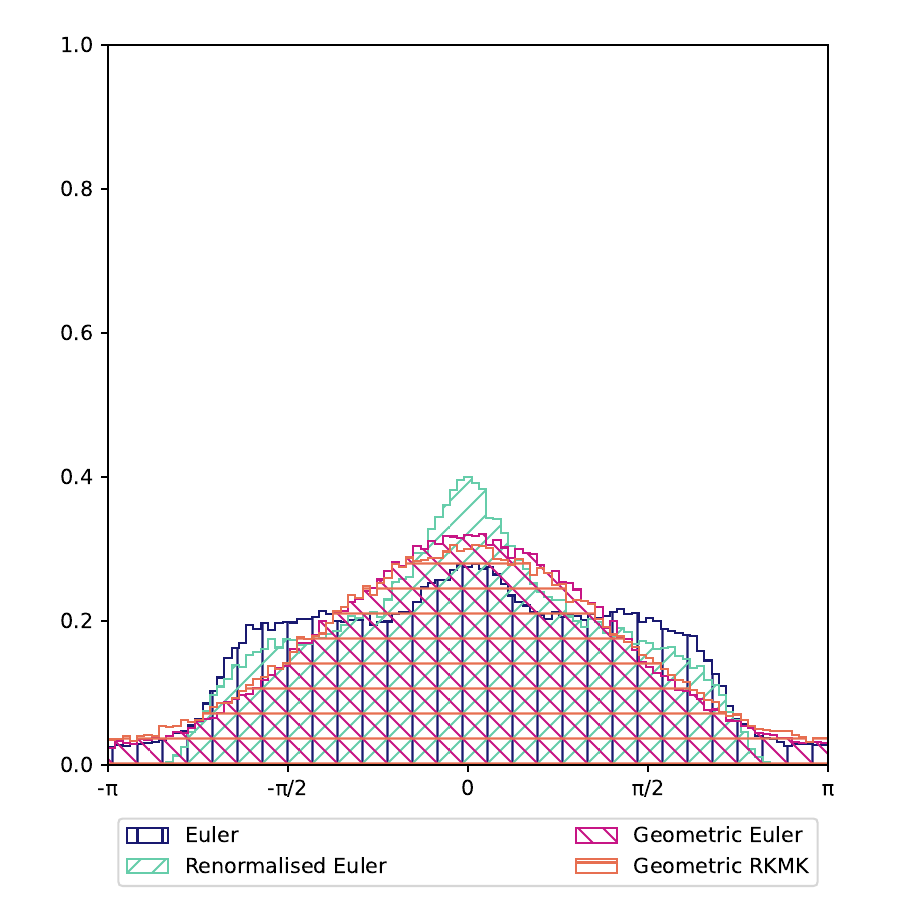}
    \caption{Histogram of angles $\arg(\Omega_n)$ at final time.\newline $h=4,T=8,\epsilon_0=0.1$}
    \label{fig:circle_hists_2}
  \end{subfigure}

  \caption{Angular diffusion on $\mathbb{S}^1$.  Comparison of naive
    Euler, renormalised Euler, first-order geometric and higher-order
    RKMK updates. The naive Euler schemes exhibit norm drift and bias
    in the angular distribution, while the exponential-map integrators
    (geometric Euler and higher-order RKMK) preserve the geometry.
    The higher-order scheme additionally reduces statistical bias and
    retains the correct distributional properties even for large timesteps.}
  \label{fig:angle_drift}
\end{figure}

\begin{example}[Strong convergence]\label{ex:strong-convergence}
  We illustrate the strong convergence rates of the numerical schemes
  used to approximate the SDE systems \autoref{eq:SDE1} and
  \autoref{eq:SDE2}, following the constructions in
  \autoref{sec:euler} and \autoref{sec:high-order}. Model parameters
  are fixed as $\alpha=0.022$, $p=1.77$, $\kappa=0.075$,
  $\epsilon_0=10^{-5}$.

  The strong error is computed at final time $T=0.1$, relative to a
  reference solution obtained with step size $h=10^{-4}$ and averaged
  over $1000$ independent sample paths.  The results are shown in
  \autoref{fig:strong-convergence}. For the angular component,
  \autoref{fig:convergence1} shows the Geometric Euler scheme, which
  attains strong order $1/2$, while \autoref{fig:convergence2} shows
  the Geometric RKMK scheme with L\'evy areas, which attains strong
  order $1$. For the energy component, the Euler-Maruyama and Milstein
  schemes (together with their log-transformed variants) achieve the
  theoretically expected rates of order $1/2$ and $1$. For the spatial
  component, the deterministic Euler update is sufficient to obtain
  order $1$, since the $X_t$-equation contains no diffusion term.
\end{example}

\begin{figure}[h!]
  \centering
  \captionsetup[subfigure]{justification=centering}
  \begin{subfigure}[b]{0.4\textwidth}
    \includegraphics[width=\textwidth]{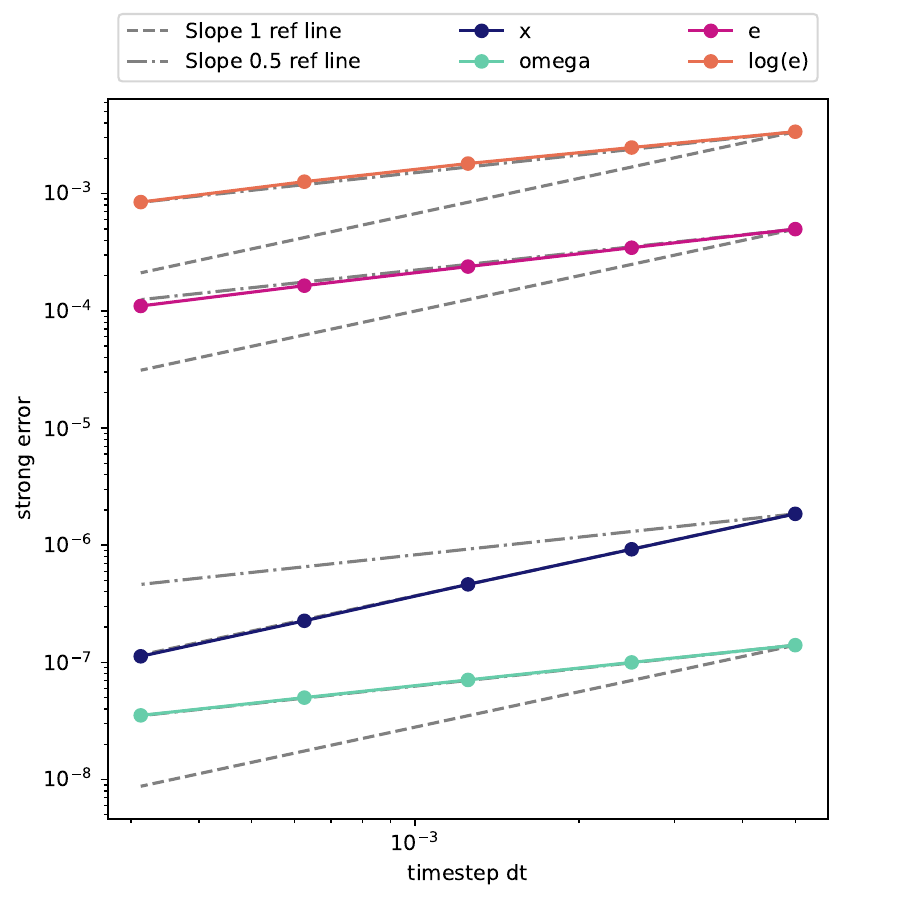}
    \caption{Geometric Euler: order $1/2$ convergence in angle.}
    \label{fig:convergence1}
  \end{subfigure}
  \hfill
  \begin{subfigure}[b]{0.4\textwidth}
    \includegraphics[width=\textwidth]{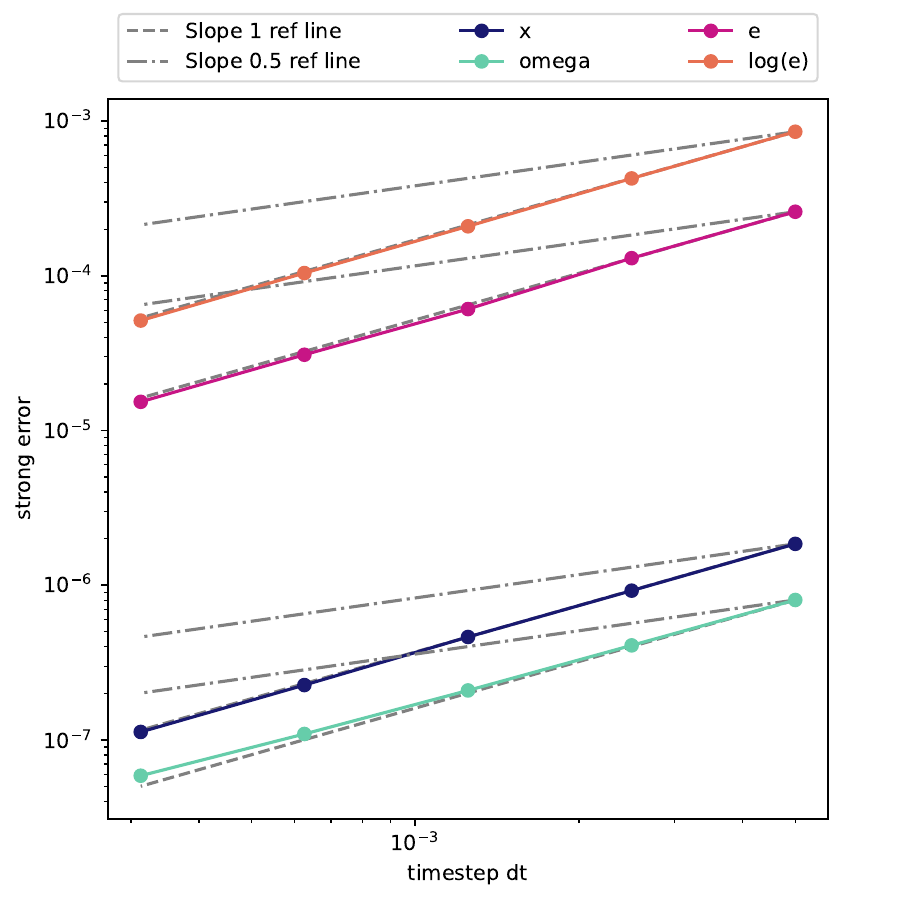}
    \caption{Geometric RKMK: order $1$ convergence in angle.}
    \label{fig:convergence2}
  \end{subfigure}
  \caption{Example~\ref{ex:strong-convergence}. Strong convergence of
    numerical schemes.  Energy schemes achieve their expected rates
    ($1/2$ for Euler-Maruyama, $1$ for Milstein), angular schemes
    achieve $1/2$ (Geometric Euler) and $1$ (Geometric RKMK with
    L\'evy areas) and the spatial component achieves order $1$ due to
    its deterministic structure.}
  \label{fig:strong-convergence}
\end{figure}

\section{Sensitivity of quantities of interest}
\label{sec:dose-sensitivity}

The energy dynamics in \eqref{eq:LogSDE} depend on the parameters
$\alpha$, $p$ and $\kappa$, which enter through the Bragg--Kleeman
stopping power law and the straggling variance. In practice, these
parameters are often uncertain, require calibration, or must be
analysed for their influence on physically relevant observables. It is
therefore natural to study the sensitivity of such quantities of
interest with respect to variations in $\alpha$, $p$ and $\kappa$.

\subsection{Introducing quantities of interest}

In proton transport, am important observable is the expected dose deposited,
defined by
\begin{align}
  \label{eq:dose}
  D(x;\theta)
  :=
  \mathbb{E}\qb{ \int_0^{T \wedge \tau(\theta)} S(E_t)\varphi(X_t-x)\d t },
\end{align}
where $\varphi$ is a spatial kernel that localises energy deposition
near $x$ and $S(E_t)$ is the stopping power. This functional depends
on parameters $\theta \in \{\alpha, p, \kappa\}$ through the energy
path $E_t = \exp(Y_t)$. The stopping time $\tau(\theta) := \inf \{ t
\geq 0 : E_t \leq E_{\min} \}$ also depends on $\theta$ via $E_t$. As
before, we impose a killing boundary at $E_{\min}>0$ so that the
process is terminated when the energy drops below this threshold.

We are interested in the sensitivity of $D(x;\theta)$ with respect to
$\theta$. In the deterministic energy evolution case when $\kappa=0$, this derivative may be
moved inside the expectation because the stopping time is an explicit smooth
function of the parameters. In the stochastic case $\kappa\neq 0$,
however, the dependence of the hitting time $\tau(\theta)$ on $\theta$
is not sufficiently regular for a pathwise differentiation argument
based on the stopped functional $D(x;\theta)$. Accordingly, we
distinguish between these two settings below. The stopped dose
functional is differentiated directly only in the deterministic
regime, while for $\kappa\neq 0$ we introduce in
\autoref{sec:straggling_sens} a regularised surrogate observable whose
pathwise sensitivity is well defined and can be approximated
numerically.

\begin{remark}[Scope of the sensitivity analysis]
  The difficulty in the stochastic case stems from the dependence of
  the stopping time
  \begin{equation}
    \tau(\theta)=\inf\{t\ge 0: E_t\le E_{\min}\}
  \end{equation}
  on the parameter $\theta$. When $\kappa=0$, the energy path is
  deterministic and $\tau(\theta)$ may be differentiated explicitly
  for $\theta\in\{\alpha,p\}$. When $\kappa\neq 0$, small parameter
  perturbations may produce order-one changes in the hitting time, so
  the functional inside the expectation lacks 
  sufficient regularity for the exchange of the order of limits between the 
  derivative and the expectation.
  Hence the stopped functional $D(x;\theta)$ is not suitable for direct
  pathwise differentiation. 
  For this reason, the stochastic
  sensitivity analysis developed later is carried out for a
  regularised dose-type observable rather than for the stopped
  functional itself.
\end{remark}

\subsection{Deterministic case ($\kappa=0$)}\label{sec:no_straggling_sens}

When $\kappa=0$, the energy evolution becomes deterministic and satisfies the ODE
\begin{equation}
    \frac{\d E}{\d t} = -S(E), \qquad E(0)=E_0.
\end{equation}
With $S(E)=\qp{p \alpha}^{-1}E^{1-p}$, the closed-form solution is
\begin{equation}
    E(t) = \bigl(E_0^p - t/\alpha\bigr)^{1/p},
\end{equation}
which is strictly decreasing and well defined up to the critical time
$T_c = \alpha E_0^p$ at which $E(t)$ vanishes.

The deterministic stopping time $T(\theta)$, defined by
$E(T(\theta))=E_{\min}$, is therefore
\begin{equation}
    T(\theta) = \alpha \bigl(E_0^p - E_{\min}^p\bigr).
\end{equation}
The dose functional reduces to
\begin{equation}\label{eq:no-straggling-dose}
    D(x;\theta) = \mathbb{E} \left[ \int_0^{T(\theta)} S(E_t) \varphi(X_t-x) \d t \right],
\end{equation}
where randomness remains only through the spatial trajectory $X_t$ and
angular component $\Omega_t$.

Differentiating with respect to $\theta \in \{\alpha,p\}$ gives
\begin{equation}
  \label{eq:dose-sens-no-straggling}
  \begin{split}
    \frac{\d}{\d\theta} D(x;\theta)
    &=
    \mathbb{E}\Bigg[
      S(E_{T(\theta)}) \varphi(X_{T(\theta)}-x) \frac{\d T}{\d\theta}
      \\
      &\qquad +
      \int_0^{T(\theta)}
      \qp{
        \frac{\partial S}{\partial \theta}(E_t)
        +
        \frac{\partial S}{\partial E}(E_t) \partial_\theta E_t}
      \varphi(X_t-x)\d t \Bigg],
  \end{split}
\end{equation}
where $E_{T(\theta)}=E_{\min}$ by construction. The stopping-time
sensitivities are
\begin{equation}
  \begin{split}
    \frac{\d T}{\d \alpha}
    &=
    E_0^p - E_{\min}^p,
    \\
    \frac{\d T}{\d p}
    &=
    \alpha \qp{\log(E_0)E_0^{p} - \log(E_{\min})E_{\min}^{p}}.
  \end{split}
\end{equation}
Since $E(t)$ is known in closed form, its parameter derivatives can also be computed explicitly
\begin{equation}
  \begin{split}
    \partial_{\alpha} E(t)
    &=
    \frac{t}{\alpha^2 p} \qp{E_0^p - t/\alpha}^{1/p - 1},
    \\
    \partial_{p} E(t)
    &=
    \frac{E_0^p \log(E_0)}{p} \qp{E_0^p - t/\alpha}^{1/p-1}
    -
    \frac{1}{p^2}\log \qp{E_0^p - \tfrac{t}{\alpha}} \qp{ E_0^p - t/\alpha }^{1/p}.
  \end{split}
\end{equation}

\subsection{Regularisation under stochastic straggling}
\label{sec:straggling_sens}

When $\kappa\neq 0$, the energy path is stochastic and the hitting
time
\begin{equation}
  \tau(\theta)=\inf\{t\ge 0:E_t\le E_{\min}\}
\end{equation}
is not sufficiently regular in the parameter $\theta$ for a direct
pathwise differentiation of the stopped functional
$D(x;\theta)$. We therefore introduce a regularised surrogate
observable and carry out the sensitivity analysis for that quantity.

We first define the indicator-based surrogate
\begin{equation}
  D^{\mathrm{ind}}(x;\theta)
  :=
  \mathbb{E}
  \qb{
    \int_0^T
    S(E_t) \varphi(X_t-x) \mathbf{1}_{\{E_t>E_{\min}\}}
    \d t
  }.
\end{equation}
The discontinuity of the indicator still obstructs pathwise
differentiation, so we replace it by a smooth transition function
\begin{equation}
  \label{eq:mollified-indicator}
  \mathcal{I}_\delta(E,E_{\min})
  :=
  \tfrac12\qp{
    1+\tanh\qp{\tfrac{E-E_{\min}}{\delta}}
  },
\end{equation}
where $\delta>0$ is a smoothing parameter. This leads to the
regularised dose functional
\begin{equation}
  \label{eq:regularised-dose}
  \widetilde D_\delta(x;\theta)
  :=
  \mathbb{E}
  \qb{
    \int_0^T
    S(E_t) \varphi(X_t-x) 
    \mathcal{I}_\delta(E_t,E_{\min})
    \d t
  }.
\end{equation}
In the stochastic straggling regime, the sensitivity analysis below is
carried out for $\widetilde D_\delta(x;\theta)$ rather than for the
stopped functional $D(x;\theta)$ itself.

For the regularised integrand, differentiation may be exchanged with expectation
and the pathwise derivative of $\widetilde D_\delta$ is
\begin{equation}
  \label{eq:dose-sens-full-model}
  \frac{\d}{\d\theta}\widetilde D_\delta(x;\theta)
  =
  \mathbb{E}
  \qb{
    \int_0^T
    \qp{
      \qp{
        \frac{\partial S}{\partial\theta}(E_t)
        +
        \frac{\partial S}{\partial E}(E_t) \partial_\theta E_t
      }
      \mathcal{I}_\delta(E_t,E_{\min})
      +
      S(E_t) 
      \frac{\partial}{\partial\theta}
      \mathcal{I}_\delta(E_t,E_{\min})
    }
    \varphi(X_t-x) \d t
  }.
\end{equation}
Since $\mathcal{I}_\delta$ depends on $\theta$ only through $E_t$, we have
\begin{equation}
  \frac{\partial}{\partial\theta}\mathcal{I}_\delta(E_t,E_{\min})
  =
  \frac{\partial\mathcal{I}_\delta}{\partial E}(E_t,E_{\min}) 
  \partial_\theta E_t
  =
  \frac{\partial_\theta E_t}
       {2\delta \cosh^2 \qp{\tfrac{E_t-E_{\min}}{\delta}}}.
\end{equation}
The smoothing removes the discontinuity at the killing threshold and
yields a dose-type observable whose pathwise sensitivity is well
defined and suitable for numerical approximation.

\section{Pathwise Sensitivities and Observable Gradients}
\label{sec:sensitivity}

Building on \autoref{sec:dose-sensitivity}, we now turn to
derivatives of path-dependent observables with respect to the
parameters $\alpha$, $p$ and $\kappa$ governing the energy dynamics in
\eqref{eq:LogSDE}. Such derivatives are referred to as
\emph{pathwise} or \emph{forward} sensitivities. They are obtained by
differentiating the stochastic system with respect to the parameter,
leading to an augmented system in which the sensitivities evolve
alongside the original state variables and are driven by the same
Brownian path.

In the stochastic straggling regime, the sensitivity process is used
to evaluate gradients of the regularised observable
$\widetilde D(x;\theta)$ introduced in
\autoref{sec:dose-sensitivity}. More generally, the same sensitivity
variables also provide gradients for other regularised energy-based
quantities of interest. For example, one may consider the regularised
survival fraction
\begin{equation}
  Q_{\mathrm{surv}}(T;\theta)
  :=
  \mathbb{E}\qb{\mathcal I(E_T,E_{\min})},
\end{equation}
and the regularised stopping-power profile
\begin{equation}
  Q_{\mathrm{sp}}(t;\theta)
  :=
  \mathbb{E}\qb{S(E_t)\mathcal I(E_t,E_{\min})}.
\end{equation}
Their parameter derivatives are given by
\begin{equation}
  \frac{\d}{\d\theta}Q_{\mathrm{surv}}(T;\theta)
  =
  \mathbb{E}\qb{
    \partial_E\mathcal I(E_T,E_{\min}) \partial_\theta E_T
  },
\end{equation}
and
\begin{equation}
  \frac{\d}{\d\theta}Q_{\mathrm{sp}}(t;\theta)
  =
  \mathbb{E}\qb{
    \qp{
      \partial_\theta S(E_t)
      +
      \partial_E S(E_t) \partial_\theta E_t
    }\mathcal I(E_t,E_{\min})
    +
    S(E_t)\partial_E\mathcal I(E_t,E_{\min}) \partial_\theta E_t
  }.
\end{equation}
Thus, once the pathwise sensitivity $\partial_\theta E_t$ is
available, gradients of several observables follow directly by the
chain rule.

\subsection{Sensitivity Equations}

For pathwise parameter sensitivities we work with the model
\eqref{eq:SDE2} with constant angular diffusion,
\begin{equation}
  \label{eq:model-for-sensitivities}
  \begin{split}
    \d X_t &= \Omega_t \d t,
    \\
    \d E_t &= -S(E_t)\d t + \sqrt{T(E_t)} \d W_t^E,
    \\
    \d \Omega_t &= \sqrt{2\epsilon_0}\circ \d W_t^S,
  \end{split}
\end{equation}
where $W^E$ and $W^S$ are independent Brownian motions. We are
interested in the energy sensitivities with respect to
$\theta\in\{\alpha,p,\kappa\}$, defined by
\begin{equation}
  J_t^\theta
  :=
  \partial_\theta E_t,
  \qquad
  J_0^\theta=\partial_\theta E_0
  \quad
  (=0 \text{ if } E_0 \text{ is fixed}).
\end{equation}
Differentiating the energy SDE with respect to $\theta$ yields the
linear SDE
\begin{equation}
  \label{eq:energy-sensitivity-SDE}
  \d J_t^\theta
  =
  -\qp{
    \partial_\theta S(E_t)
    +
    \partial_E S(E_t) J_t^\theta
  }\d t
  +
  \qp{
    \partial_\theta \sqrt{T(E_t)}
    +
    \partial_E \sqrt{T(E_t)} J_t^\theta
  }\d W_t^E.
\end{equation}
For each $\theta$, this equation is driven by the \emph{same}
Brownian motion $W^E$ as the primary energy process. This shared-noise
coupling is what allows the sensitivity process to be combined with
the regularised observable representations of
\autoref{sec:dose-sensitivity}.

\subsection{Explicit Derivatives of the Coefficients}

Since only the energy dynamics depend on the parameters
$\alpha$, $p$ and $\kappa$, it suffices to compute derivatives of the
drift $S(E)$ and diffusion $\sqrt{T(E)}$ with respect to $E$ and to
the model parameters. These determine both the coefficients of the
sensitivity equations and the inhomogeneous forcing terms.

Derivatives with respect to the energy variable are
\begin{equation}
  \begin{split}
    \partial_E S(E)
    &=
    \frac{1-p}{\alpha p} E^{-p}
    =
    \frac{1-p}{E}S(E),
    \\
    \partial_E \sqrt{T(E)}
    &=
    \qp{1-\tfrac{p}{2}}
    \sqrt{\frac{\kappa}{\alpha p}} E^{-p/2}
    =
    \frac{1}{2}\frac{T'(E)}{\sqrt{T(E)}}.
  \end{split}
\end{equation}
With respect to $\alpha$,
\begin{equation}
  \begin{split}
    \partial_\alpha S(E)
    &=
    -\frac{1}{\alpha}S(E),
    \\
    \partial_\alpha \sqrt{T(E)}
    &=
    -\frac{1}{2\alpha}\sqrt{T(E)}.
  \end{split}
\end{equation}
With respect to $p$,
\begin{equation}
  \begin{split}
    \partial_p S(E)
    &=
    -\frac{1}{p}S(E)
    -
    (\log E)S(E)
    \\
    &=
    -\frac{E^{1-p}}{\alpha p^2}
    -
    \frac{\log E}{\alpha p}E^{1-p},
    \\
    \partial_p \sqrt{T(E)}
    &=
    -\frac{1}{2p}\sqrt{T(E)}
    -
    \frac{1}{2}(\log E)\sqrt{T(E)}.
  \end{split}
\end{equation}
With respect to $\kappa$,
\begin{equation}
  \begin{split}
    \partial_\kappa S(E)
    &= 0,
    \\
    \partial_\kappa \sqrt{T(E)}
    &= \frac{1}{2\kappa}\sqrt{T(E)}.
  \end{split}
\end{equation}

\begin{remark}[Coupling with the state variables]
  Although the sensitivity variable $J_t^\theta$ carries no geometric
  constraint of its own, its discretisation must remain coupled to the
  state approximation. In particular, the same Brownian increments
  used in the energy update must also be used in the sensitivity
  update. This preserves the pathwise coupling between the state and
  sensitivity variables and is the basis for the observable-gradient
  formulas above.
\end{remark}

\subsection{Discretisation of the Sensitivity Equations}

To simulate sensitivities for $\theta\in\{\alpha,p,\kappa\}$, we add
one scalar sensitivity equation to the system
\eqref{eq:model-for-sensitivities}. Each such variable is coupled to
the state only through the energy equation and evolves independently
of the others once the state path is fixed.

\subsubsection{Euler--Maruyama discretisation}

Given $Y_n$ and $J_n^\theta$ at time $t_n$, one geometric Euler step
with $\xi_n^E\sim\mathcal N(0,h)$ first advances the state
$(X_n,Y_n,\Omega_n)$ through \eqref{eq:LogEulerExp}, and then updates
the sensitivity by
\begin{equation}
  \label{eq:euler-logY-and-J}
  \begin{split}
    E_n &= \exp(Y_n),
    \\
    J_{n+1}^\theta
    &=
    J_n^\theta
    -
    \qp{
      \partial_\theta S(E_n)
      +
      \partial_E S(E_n)J_n^\theta
    }h
    +
    \qp{
      \partial_\theta \sqrt{T(E_n)}
      +
      \partial_E \sqrt{T(E_n)}J_n^\theta
    }\xi_n^E,
  \end{split}
\end{equation}
where the same increment $\xi_n^E$ is reused from the state update.

\subsubsection{Milstein discretisation}

For higher accuracy, we instead advance $(X_n,Y_n,\Omega_n)$ through
\eqref{eq:LogMilsteinExp}, set
\begin{equation}
  E_n=\exp(Y_n),
\end{equation}
and then update the energy sensitivity using the same Brownian
increment $\xi_n^E\sim\mathcal N(0,h)$ as in the log-energy Milstein
step:
\begin{equation}
  \label{eq:sensitivity-milstein}
  \begin{split}
    J_{n+1}^\theta
    &=
    J_n^\theta
    -
    \qp{
      \partial_\theta S(E_n)
      +
      \partial_E S(E_n)J_n^\theta
    }h
    +
    \qp{
      \partial_\theta \sqrt{T(E_n)}
      +
      \partial_E \sqrt{T(E_n)}J_n^\theta
    }\xi_n^E
    \\
    &\quad
    +
    \frac12
    \partial_E\qp{
      \sqrt{T(E_n)}
      \qp{
        \partial_\theta \sqrt{T(E_n)}
        +
        \partial_E \sqrt{T(E_n)}J_n^\theta
      }
    }
    \qp{(\xi_n^E)^2-h}.
  \end{split}
\end{equation}
This is the standard Milstein discretisation for a multidimensional SDE driven by one-dimensional Brownian
motion (see e.g. Section~10.3 of \cite{Kloeden1992}), applied here to the coupled
state-sensitivity system with common noise.

\begin{proposition}[Consistent augmented state-sensitivity discretisation]
  \label{thm:strong-augmented}
  Let the state variables $(X_n,Y_n,\Omega_n)$ be advanced either by
  the geometric Euler scheme \eqref{eq:LogEulerExp} or by the
  higher-order scheme \eqref{eq:LogMilsteinExp}. For a fixed parameter
  $\theta\in\{\alpha,p,\kappa\}$, let $J_n^\theta$ be updated by
  \eqref{eq:euler-logY-and-J} in the Euler case and by
  \eqref{eq:sensitivity-milstein} in the Milstein case, always reusing
  the same Brownian increment $\xi_n^E$ as in the corresponding
  energy update.

  Then the sensitivity updates are obtained by differentiating the
  corresponding energy discretisations with respect to $\theta$ along
  the same realised Brownian path. In particular, the augmented
  numerical scheme is consistent with the formal sensitivity equation
  \eqref{eq:energy-sensitivity-SDE} and preserves the required
  pathwise coupling between state and sensitivity variables.
\end{proposition}

\begin{proof}
  In the Euler case, differentiate the discrete energy update
  \eqref{eq:LogEulerExp} with respect to $\theta$ while holding the
  realised increment $\xi_n^E$ fixed. Applying the chain rule to the
  dependence of $S(E_n)$ and $T(E_n)$ on both $E_n$ and $\theta$
  yields exactly \eqref{eq:euler-logY-and-J}. The same argument
  applies to the Milstein case: differentiating the log-energy
  Milstein update \eqref{eq:LogMilsteinExp} term by term with respect
  to $\theta$, again with the same realised increment $\xi_n^E$,
  gives \eqref{eq:sensitivity-milstein}. Since these are precisely the
  discretisations obtained from the formal sensitivity equation using
  the same Brownian path, the resulting augmented scheme is
  pathwise-coupled and consistent.
\end{proof}

\begin{remark}[Use in observable gradients]
  The result above concerns the pathwise sensitivity process itself.
  Gradients of regularised observables are then obtained by combining
  $J_n^\theta$ with the chain-rule representations from
  \autoref{sec:dose-sensitivity}. In particular, the same simulated
  sensitivity path may be reused to estimate derivatives of
  $\widetilde D(x;\theta)$, $Q_{\mathrm{surv}}(T;\theta)$ and
  $Q_{\mathrm{sp}}(t;\theta)$ within a common numerical framework.

  We do not pursue a separate convergence proof for the observable
  gradients here, although these results do follow from arguments in
  \cite{giles2024strong}. Rather, the purpose of this section is to
  show that the discretised sensitivity equations are naturally
  induced by the structure-preserving discretisations of the
  underlying state dynamics and remain synchronised with them along
  each sampled path.
\end{remark}

\section{Numerical experiments}
\label{sec:numerics}

\subsection{Dose computations}

Here we present numerical dose calculations comparing three variants
of the model. These experiments illustrate how model parameters shape
the dose distribution, motivating the quantitative sensitivity
analysis in \autoref{sec:sensitivity}. They also highlight the effect
of using the extended model in which the angular diffusion depends on
energy in accordance with Moli\`ere theory.

We first examine how introducing energy straggling alters the dose. We
then study angular diffusion, initially with a constant coefficient
$\epsilon_0$ and subsequently with an energy dependent coefficient
$\epsilon(E)$. As a baseline for each comparison, we include the dose
obtained with
$(\alpha,p,\kappa,\epsilon_0)=(0.0022, 1.77, 0, 0.005)$.

Unless stated otherwise, simulations use an initial beam energy of
$62$~MeV, an initial position
$(x_0,y_0)=(0.0~\text{cm}, 2.0~\text{cm})$, a Gaussian energy spread
of $1\%$ and a Gaussian transverse half width of $0.1$~cm. The initial
direction is parallel to the $x$-axis (longitudinal). The spatial grid
uses $N_x=200$ points along $x$ (along the beam) and $N_y=50$ points
along $y$ (transverse). The SDE system is advanced with time step
$h=0.005$ and $N=200,000$ independent paths are averaged to estimate
the expectation in \autoref{eq:dose}.

\begin{example}[Effect of energy straggling]\label{ex:straggling}
  We examine how the energy-straggling amplitude $\kappa$ influences
  the dose. All other parameters are fixed at $\alpha=0.0022$,
  $p=1.77$ and $\epsilon_0=0.005$, so that only $\kappa$ varies.

  Since the straggling term in \autoref{eq:SDE2} controls fluctuations
  in energy loss, larger $\kappa$ should broaden the Bragg peak and
  lower its maximum. This is observed in \autoref{fig:straggling}:
  moving from panels~(a) to~(c), the peak widens and its height
  decreases while the overall depth range remains similar.

  \begin{figure}[h!]
    \centering
    \captionsetup[subfigure]{justification=centering}
    \begin{subfigure}[b]{0.32\textwidth}
      \includegraphics[width=\textwidth]{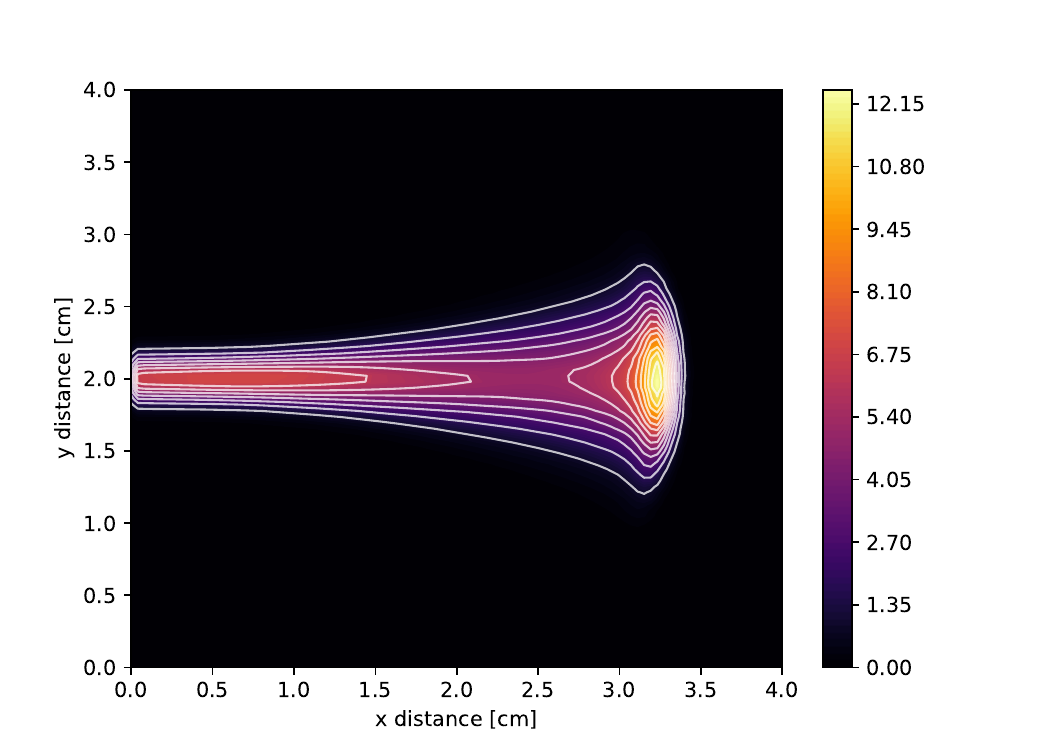}
      \caption{No straggling ($\kappa=0$); $\epsilon_0$ constant.}
      \label{fig:straggling1}
    \end{subfigure}
  \hfill
  \begin{subfigure}[b]{0.32\textwidth}
    \includegraphics[width=\textwidth]{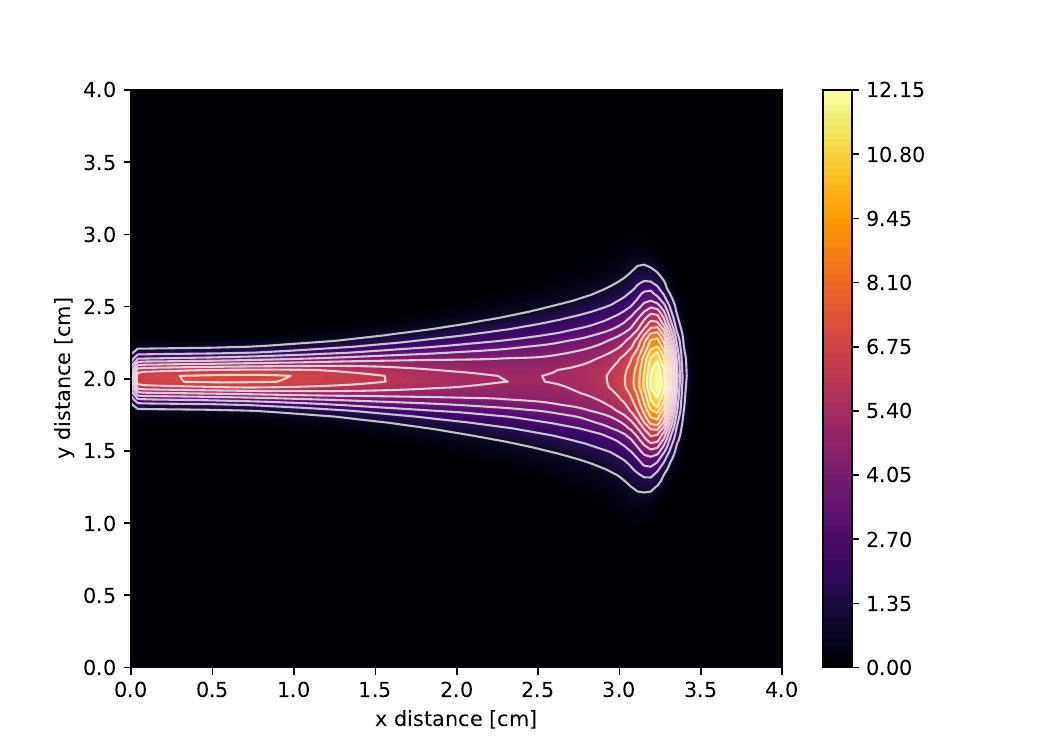}
    \caption{Moderate straggling ($\kappa=4\times10^{-5}$); $\epsilon_0$ constant.}
    \label{fig:straggling2}
  \end{subfigure}
  \hfill
  \begin{subfigure}[b]{0.32\textwidth}
    \includegraphics[width=\textwidth]{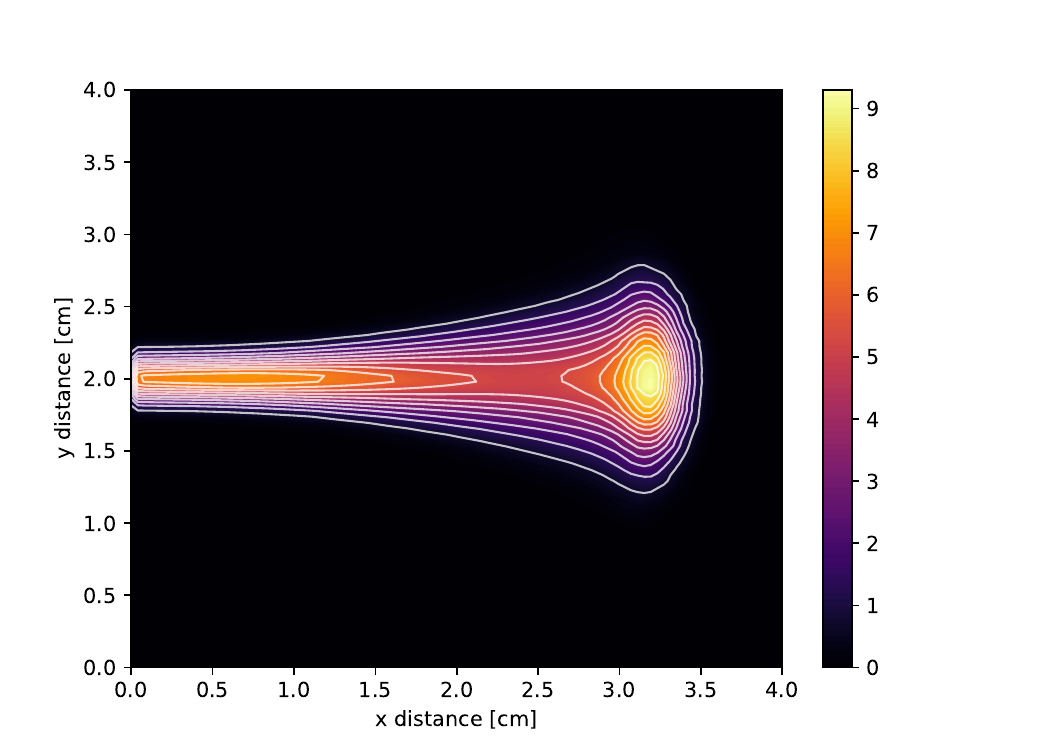}
    \caption{Strong straggling ($\kappa=10^{-3}$); $\epsilon_0$ constant.}
    \label{fig:straggling3}
  \end{subfigure}
  \caption{Example~\ref{ex:straggling}. Impact of the
    energy-straggling amplitude $\kappa$ on the dose map. Increasing
    $\kappa$ broadens the Bragg peak and reduces its maximum,
    consistent with greater variance in energy deposition. In all
    panels the angular diffusion is held fixed at
    $\epsilon_0=0.005$.}
  \label{fig:straggling}
\end{figure}
\end{example}

\begin{example}[Effect of angular diffusion]\label{ex:eps-const}
  We examine the role of the angular diffusion amplitude $\epsilon_0$
  in \autoref{eq:SDE1}. Since angular diffusion governs the lateral
  spread of particle trajectories, we expect a narrower, more
  collimated dose profile when $\epsilon_0$ is small, and a broader,
  fanned-out profile when $\epsilon_0$ is large. This behaviour is
  confirmed in \autoref{fig:constant-eps}. As $\epsilon_0$ increases,
  the distribution remains narrow near the entry point but spreads
  significantly near the distal edge and Bragg peak. We also note the
  dip in the dose just before the peak, a feature that has been
  observed experimentally in proton beams \cite{reaz2022sharp}.

  \begin{figure}[h!]
    \centering
    \captionsetup[subfigure]{justification=centering}
    \begin{subfigure}[b]{0.32\textwidth}
      \includegraphics[width=\textwidth]{plots/dose_plots/dose_no_straggling.pdf}
      \caption{$\epsilon_0=0.005$ (reference). \phantom{blurby blurb}}
      \label{fig:eps1}
    \end{subfigure}
    \hfill
    \begin{subfigure}[b]{0.32\textwidth}
      \includegraphics[width=\textwidth]{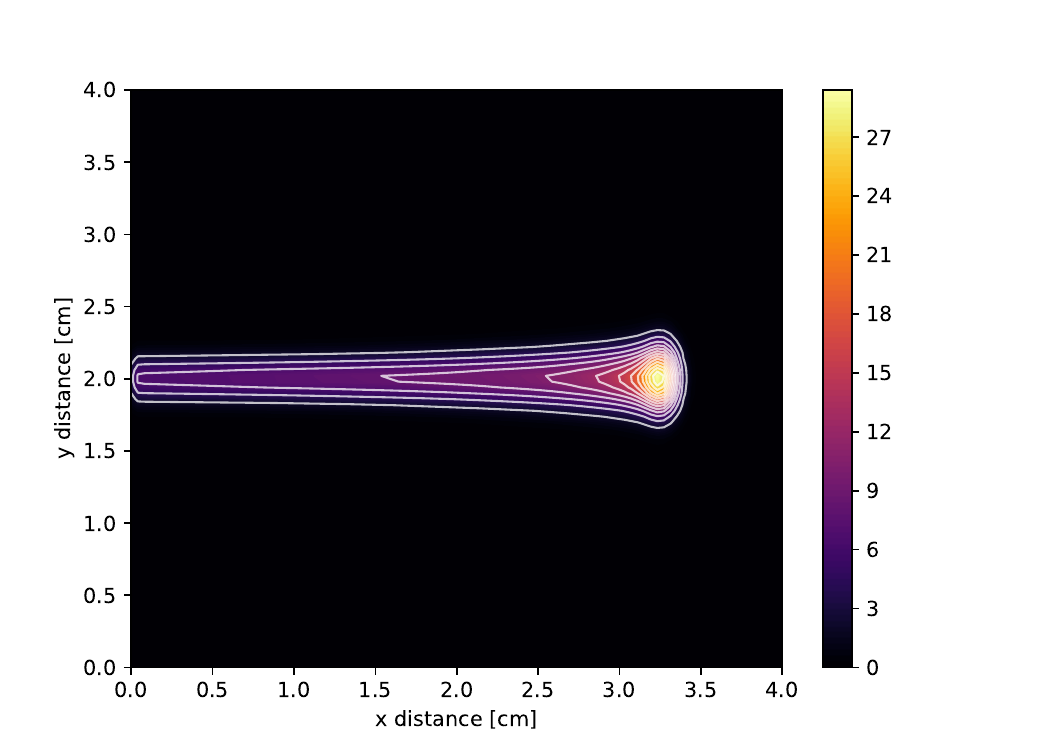}
      \caption{$\epsilon_0=0.0005$ (small angular diffusion).}
      \label{fig:eps2}
    \end{subfigure}
    \hfill
    \begin{subfigure}[b]{0.32\textwidth}
      \includegraphics[width=\textwidth]{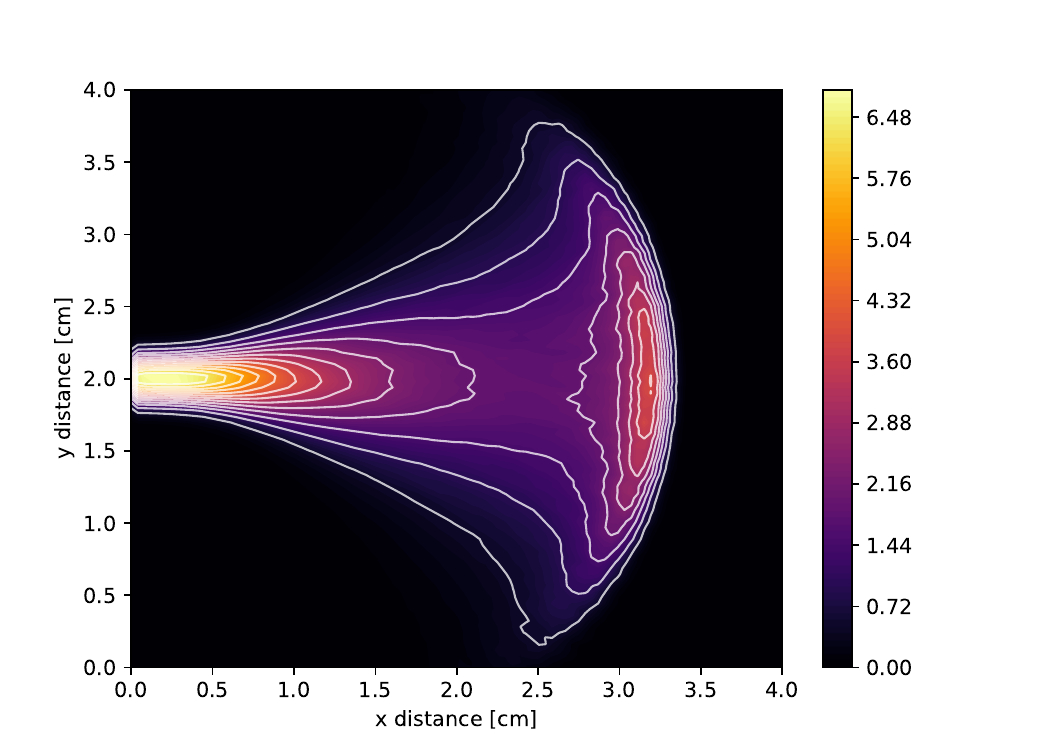}
      \caption{$\epsilon_0=0.05$ (large angular diffusion).}
      \label{fig:eps3}
    \end{subfigure}
    \caption{Example~\ref{ex:eps-const}. Impact of constant angular
      diffusion on the dose map. A small $\epsilon_0$ yields a
      straight, narrow profile, while a large $\epsilon_0$ produces a
      broad fan-like spread with a characteristic dip before the Bragg
      peak.}
  \label{fig:constant-eps}
\end{figure}
\end{example}

\begin{example}[Effect of energy-dependent angular diffusion]
  \label{ex:moliere}
  We now compare dose distributions obtained with constant angular
  diffusion and with the energy-dependent diffusion coefficient
  $\epsilon(E)$ given by \autoref{eq:moliere-coef}, setting
  $\kappa=0$. To make the comparison, the parameters $\bar{\epsilon}$
  and $\epsilon_c$ are chosen so that the diffusion coefficient at the
  initial energy $E_0=62 \mathrm{MeV}$ matches that of the constant
  model. Specifically, fixing $\epsilon_c=5.0$, we choose
  $\bar{\epsilon}$ such that
  \begin{equation}
    \frac{\bar{\epsilon}}{E_0^2 + \epsilon_c^2} = \epsilon_0.
  \end{equation}
  
  The parameter values in \autoref{fig:moliere2} and
  \autoref{fig:moliere3} are chosen so that $\epsilon(E_0)$ equals
  $0.005$ and $0.025$ respectively. As shown in \autoref{fig:moliere},
  the resulting dose maps are broadly similar to those from the
  constant-diffusion model. However, when the diffusion coefficient is
  larger (Figure~\ref{fig:moliere3}), the distribution near the distal
  edge is more spread out than in the constant case
  (cf.~Figure~\ref{fig:eps3}), even without energy straggling.

  \begin{figure}[h!]
    \centering
    \captionsetup[subfigure]{justification=centering}
    \begin{subfigure}[b]{0.32\textwidth}
      \includegraphics[width=\textwidth]{plots/dose_plots/dose_no_straggling.pdf}
      \caption{Constant angular diffusion, $\epsilon_0=0.005$.}
      \label{fig:moliere1}
    \end{subfigure}
    \hfill
    \begin{subfigure}[b]{0.32\textwidth}
      \includegraphics[width=\textwidth]{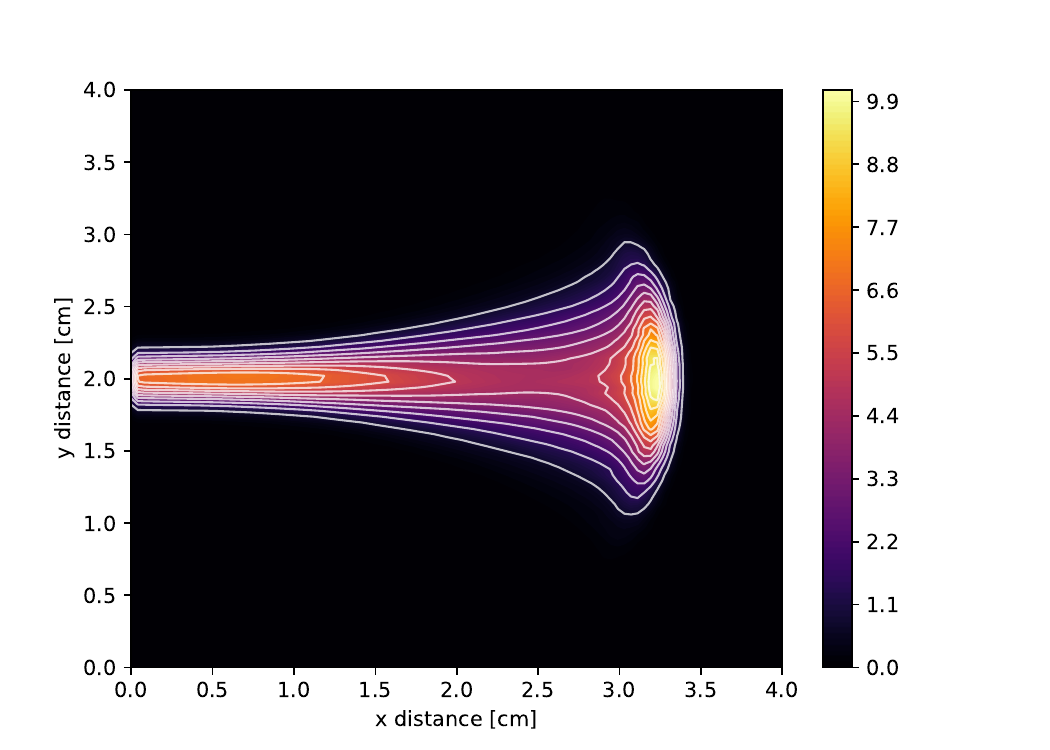}
      \caption{Energy-dependent diffusion.\newline $\bar{\epsilon}=19.3$, $\epsilon_c=5.0$.}
      \label{fig:moliere2}  
    \end{subfigure}
    \hfill
    \begin{subfigure}[b]{0.32\textwidth}
      \includegraphics[width=\textwidth]{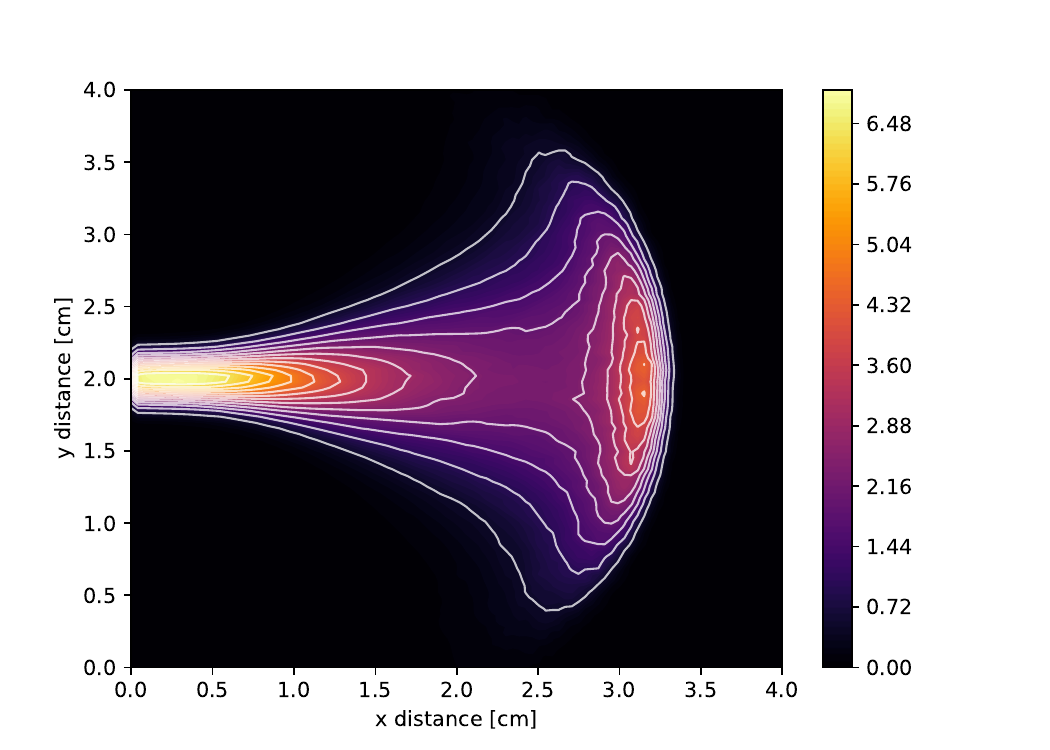}
      \caption{Energy-dependent diffusion.\newline $\bar{\epsilon}=96.7$, $\epsilon_c=5.0$.}
      \label{fig:moliere3}
    \end{subfigure}
    \caption{Example~\ref{ex:moliere}. Comparison of constant and
      energy-dependent angular diffusion. Larger diffusion
      coefficients produce more fanned-out dose distributions,
      particularly near the distal edge.}
  \label{fig:moliere}
\end{figure}
\end{example}

\subsection{Sensitivity computations}

We now present numerical experiments illustrating the computation of
pathwise sensitivities using the structure-preserving discretisations
developed above. Our focus is on dose-related observables. In the
no-straggling case we use the exact stopped-dose representation from
Section~\ref{sec:dose-sensitivity}, while in the stochastic-straggling
regime we use the regularised surrogate observable introduced
there. We compare the resulting pathwise estimators with
finite-difference benchmarks and examine their empirical behaviour.

As for the dose computations above, unless stated otherwise, simulations use an initial beam energy of
$62$~MeV, an initial position
$(x_0,y_0)=(0.0~\text{cm}, 2.0~\text{cm})$, a Gaussian energy spread
of $1\%$ and a Gaussian transverse half width of $0.1$~cm. The initial
direction is parallel to the $x$-axis (longitudinal). Here, the spatial grid
uses $N_x=100$ points along $x$ (along the beam) and $N_y=50$ points
along $y$ (transverse). The SDE system is advanced with time step
$h=0.01$ and $N=200,000$ independent paths are averaged.

\begin{remark}[Interpretation of sensitivities]
  The sensitivity profiles quantify how the expected dose distribution
  shifts in response to perturbations of the stopping power
  parameters. A negative sensitivity means that increasing the
  parameter decreases dose at that spatial location, while a positive
  sensitivity indicates dose amplification. For example, the negative
  region on the proximal side and the positive region on the distal
  side of the Bragg peak show that increasing $\alpha$ or $p$ shortens
  the particle range and moves the peak towards the entrance, while
  decreasing them lengthens the range.

  These profiles provide local information on where uncertainties in
  physical parameters most strongly affect dose deposition. Regions
  with large sensitivity magnitude are those where parameter
  uncertainty is most critical for treatment planning and robustness
  analysis, while regions with near-zero sensitivity are less
  affected. Thus, the sensitivity plots can be read as maps of dose
  susceptibility to modelling error in the stopping power law.
\end{remark}

To assess the empirical accuracy of the pathwise method, we compare it
with central finite-difference estimates. For a
parameter-dependent observable $\Phi_\theta$, the finite-difference
gradient estimator is
\begin{equation}\label{eq:fd-estimator}
  \frac{\partial}{\partial\theta}\mathbb{E}\qb{\Phi_\theta}
  \approx
  \frac{
    \mathbb{E}\qb{\Phi_{\theta+\Delta\theta}}
    -
    \mathbb{E}\qb{\Phi_{\theta-\Delta\theta}}
  }{2\Delta\theta}.
\end{equation}
In the no-straggling case, $\Phi_\theta$ is the stopped-dose
functional from Section~\ref{sec:dose-sensitivity}; in the full
model, $\Phi_\theta$ is the corresponding regularised surrogate
observable.

In contrast to the pathwise estimator, the finite-difference estimator
exhibits noticeable bias for nonzero $\Delta\theta$, and the usual 
bias-variance tradeoff as $\Delta\theta \to 0$.

\begin{example}[Parameter sensitivities under the no-straggling model]
  \label{ex:no-straggling-model-sensitivities}
  We compute parameter sensitivities of the dose distribution under
  the model with $\kappa=0$, as specified in \autoref{eq:SDE1}. In
  this case, the equation governing energy evolution reduces from an
  SDE to an ODE, which admits a closed-form solution. The dose and
  pathwise sensitivity computations therefore reduce to numerical
  evaluation of \autoref{eq:no-straggling-dose} and
  \autoref{eq:dose-sens-no-straggling}. Finite difference
  sensitivities are obtained according to \autoref{eq:fd-estimator},
  evaluating the dose functional \autoref{eq:no-straggling-dose} at
  two perturbed values of the parameter $\theta \in \{\alpha,p\}$. For
  the results shown, we fix $\alpha=0.0022$, $p=1.77$,
  $\epsilon_0=0.005$, and choose $\Delta\alpha = 0.1\alpha$, $\Delta p
  = 0.01p$ for the finite difference estimates.
  
  Figures \ref{fig:dose-sens-alpha-no-straggling} and
  \ref{fig:dose-sens-p-no-straggling} display the resulting
  sensitivity profiles for $\alpha$ and $p$ respectively. In both
  cases, the finite difference estimators underestimate the magnitude
  of the sensitivity compared to the pathwise estimators. They also
  produce sensitivity profiles that are more spatially spread
  out. Both effects stem from the bias of the finite difference
  estimator and the numerical diffusion it introduces, which smooths
  and attenuates sharp features.

  Qualitatively, both estimators capture the same behaviour. Negative
  gradients on the proximal side of the dose peak and positive
  gradients on the distal side. Since $\alpha$ and $p$ appear in the
  stopping power $S(E)$, which governs the mean rate of energy loss
  and hence the particle range, this behaviour is consistent, reducing
  the stopping power increases the range, shifting the peak
  deeper. Finally, the magnitude of the sensitivity with respect to
  $\alpha$ is orders of magnitude larger than that for $p$.

\begin{figure}[h!]
  \captionsetup[subfigure]{justification=centering}
  \centering
  \begin{subfigure}[b]{0.32\textwidth}
    \includegraphics[width=\textwidth]{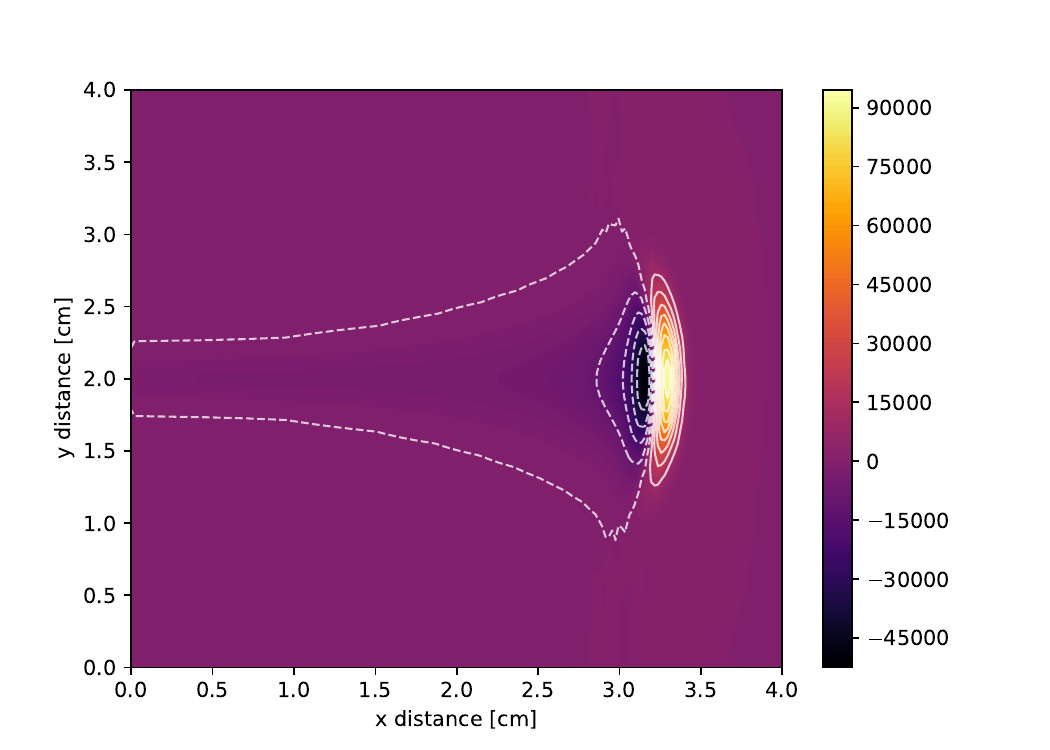}
    \caption{Pathwise sensitivity.}\label{fig:alpha-pathwise}
  \end{subfigure}
  \begin{subfigure}[b]{0.32\textwidth}
    \includegraphics[width=\textwidth]{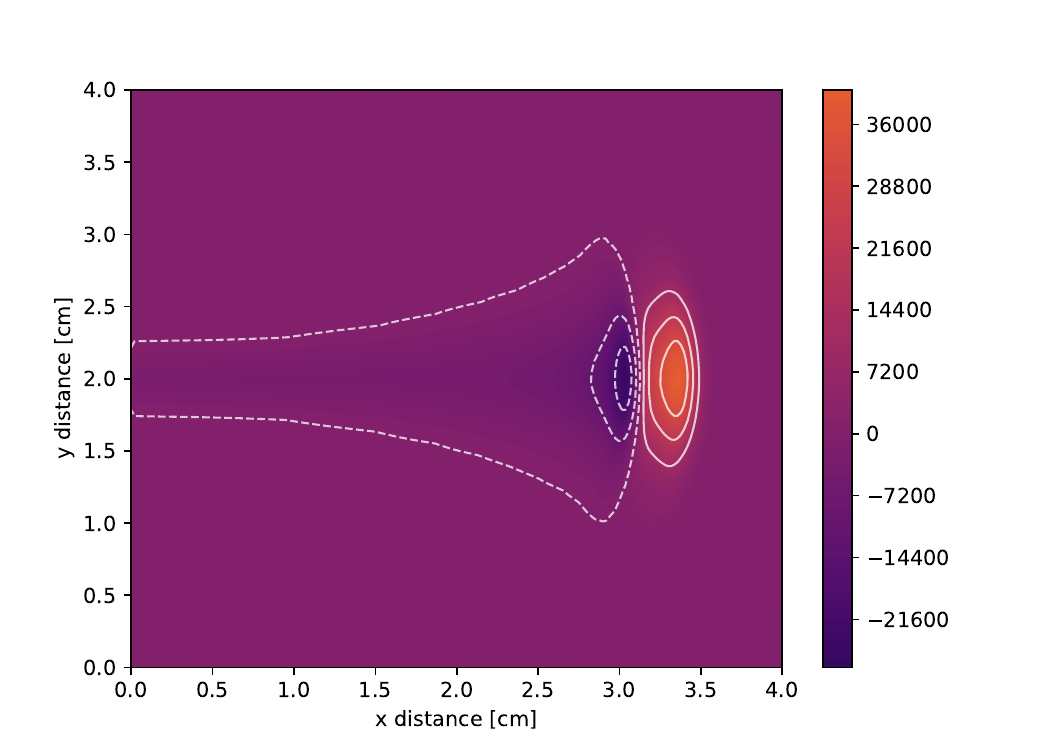}
    \caption{Finite difference sensitivity.}\label{fig:alpha-fd}
  \end{subfigure}
  \begin{subfigure}[b]{0.32\textwidth}
    \includegraphics[width=\textwidth]{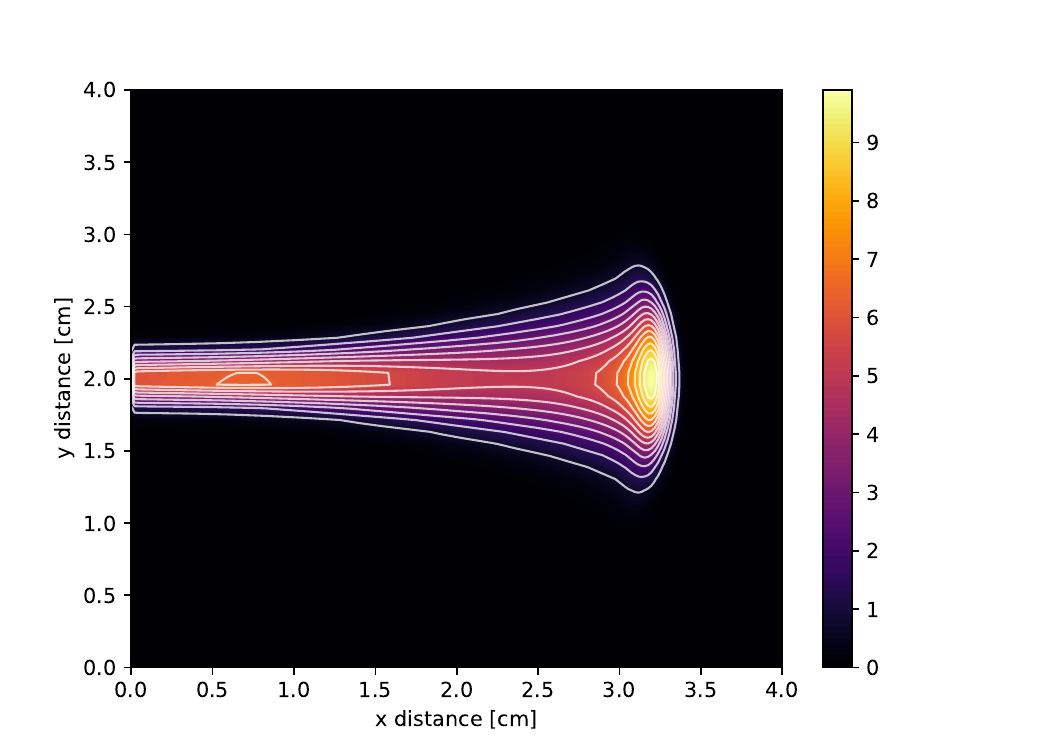}
    \caption{Reference dose.}
  \end{subfigure}
  \caption{Example \ref{ex:no-straggling-model-sensitivities}.
    Sensitivity with respect to $\alpha$. The finite difference
    estimator underestimates the gradient magnitude and produces a
    more diffuse profile compared to the pathwise
    method.}\label{fig:dose-sens-alpha-no-straggling}
\end{figure}

\begin{figure}[h!]
  \captionsetup[subfigure]{justification=centering}
  \centering
  \begin{subfigure}[b]{0.32\textwidth}
    \includegraphics[width=\textwidth]{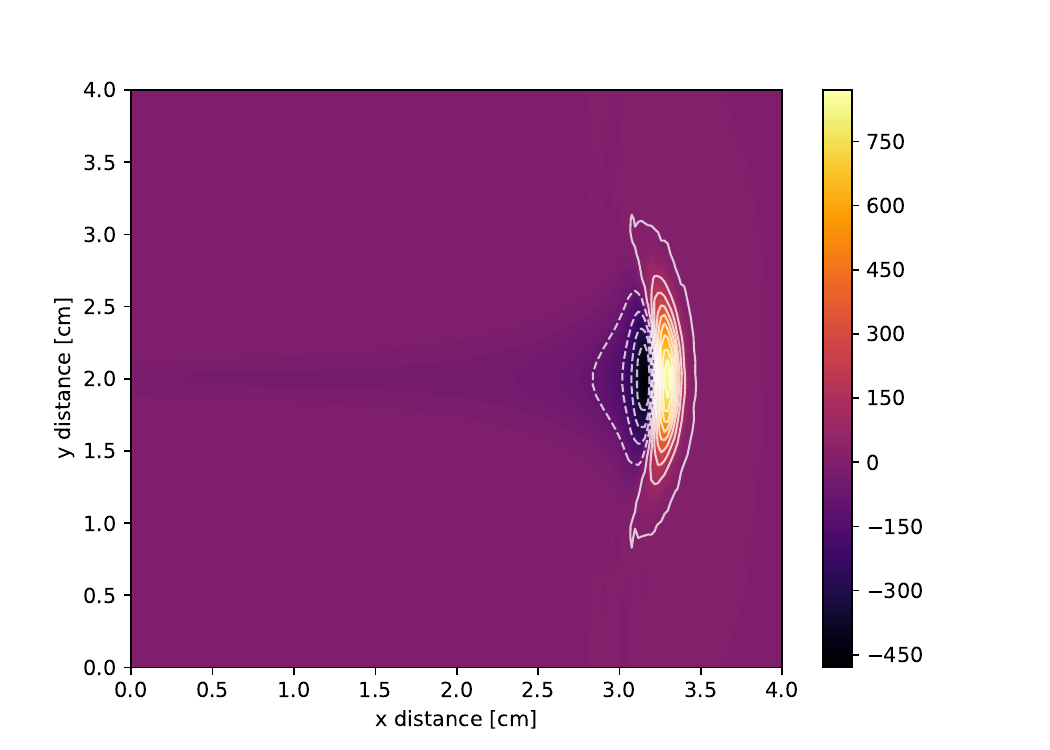}
    \caption{Pathwise sensitivity.}\label{fig:p-pathwise}
  \end{subfigure}
  \begin{subfigure}[b]{0.32\textwidth}
    \includegraphics[width=\textwidth]{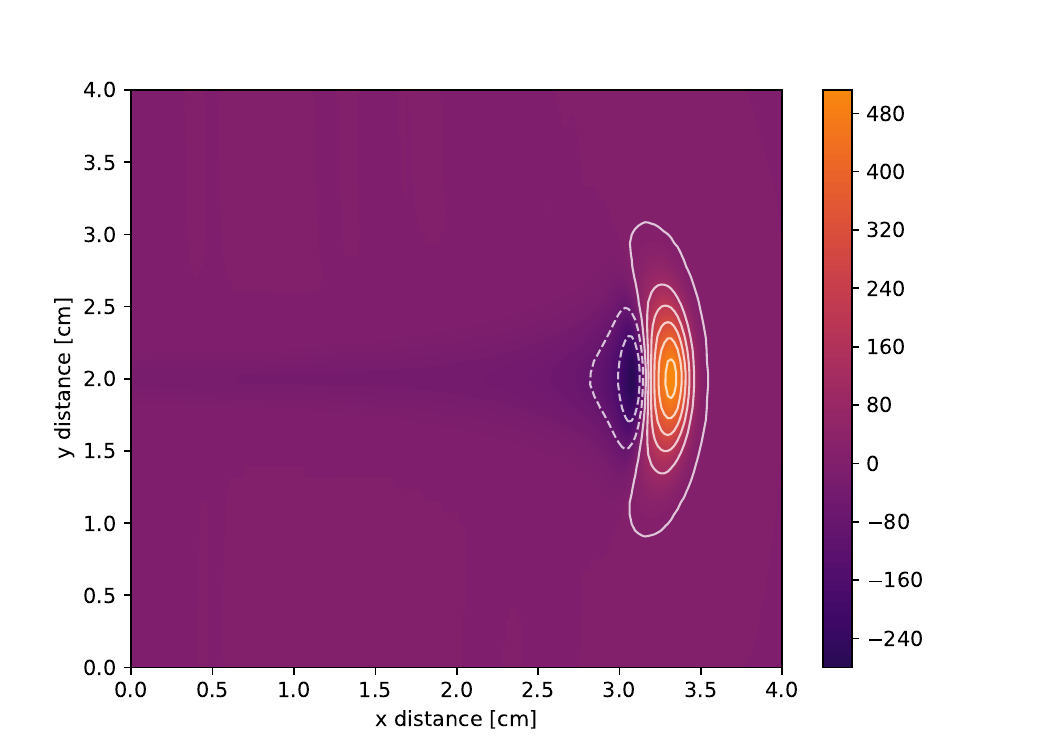}
    \caption{Finite difference sensitivity.}\label{fig:p-fd}
  \end{subfigure}
  \begin{subfigure}[b]{0.32\textwidth}
    \includegraphics[width=\textwidth]{plots/dose_sens/no_straggling/dose_ref.pdf}
    \caption{Reference dose.}
  \end{subfigure}
  \caption{Example \ref{ex:no-straggling-model-sensitivities}.
    Sensitivity with respect to $p$. The same qualitative features are
    captured by both methods, but the finite difference estimator
    underestimates the gradient magnitude and introduces artificial
    spreading.}\label{fig:dose-sens-p-no-straggling}
\end{figure}
\end{example}

\begin{example}[Parameter sensitivities under the full model]
  \label{ex:full-model-sensitivities}
  In this regime, both the pathwise and finite-difference computations
  are carried out for the regularised surrogate observable $\widetilde
  D_\delta(x;\theta)$ from Section~\ref{sec:straggling_sens}, using
  the same smoothing parameters.
  
  We now compute sensitivities using the full model with nonzero
  $\kappa$. In this case, the pathwise estimator for dose sensitivity
  is given by \autoref{eq:dose-sens-full-model}. The parameters of the
  mollified indicator function in \autoref{eq:mollified-indicator} are
  chosen as $\delta=0.5$ and $E_{\min}=2.0$. Model parameters are
  fixed as $\alpha=0.0022$, $p=1.77$, $\epsilon_0=0.005$,
  $\kappa=0.001$, while finite difference estimates use $\Delta \alpha
  = 0.1\alpha$, $\Delta p = 0.01p$, and $\Delta \kappa = 0.1\kappa$.

  Figures \ref{fig:dose-sens-alpha}, \ref{fig:dose-sens-p}, and
  \ref{fig:dose-sens-kappa} show the dose profile and sensitivities
  with respect to $\alpha$, $p$, and $\kappa$. In all cases, the
  finite difference estimates underestimate the gradient magnitude
  compared to the pathwise estimates, most clearly for $\alpha$, but
  also for $p$ and $\kappa$. In these experiments, the
  finite-difference estimates are smoother and of smaller magnitude
  than the pathwise estimates, consistent with finite-difference bias
  and Monte Carlo noise.
  
  Qualitatively, both estimators capture the same structure. For
  $\alpha$ and $p$, the gradient is negative before the dose peak and
  positive near or beyond the peak, reflecting their role in the
  stopping power $S(E)$ and hence the particle range. For $\kappa$,
  the gradient is negative at the peak and positive on either
  side. Since larger $\kappa$ widens the dose peak while reducing its
  height (as seen in Example \ref{ex:straggling}), this shape is
  consistent with the underlying physics.

  \begin{figure}[h!]
    \captionsetup[subfigure]{justification=centering}
    \centering
    \begin{subfigure}[b]{0.32\textwidth}
      \includegraphics[width=\textwidth]{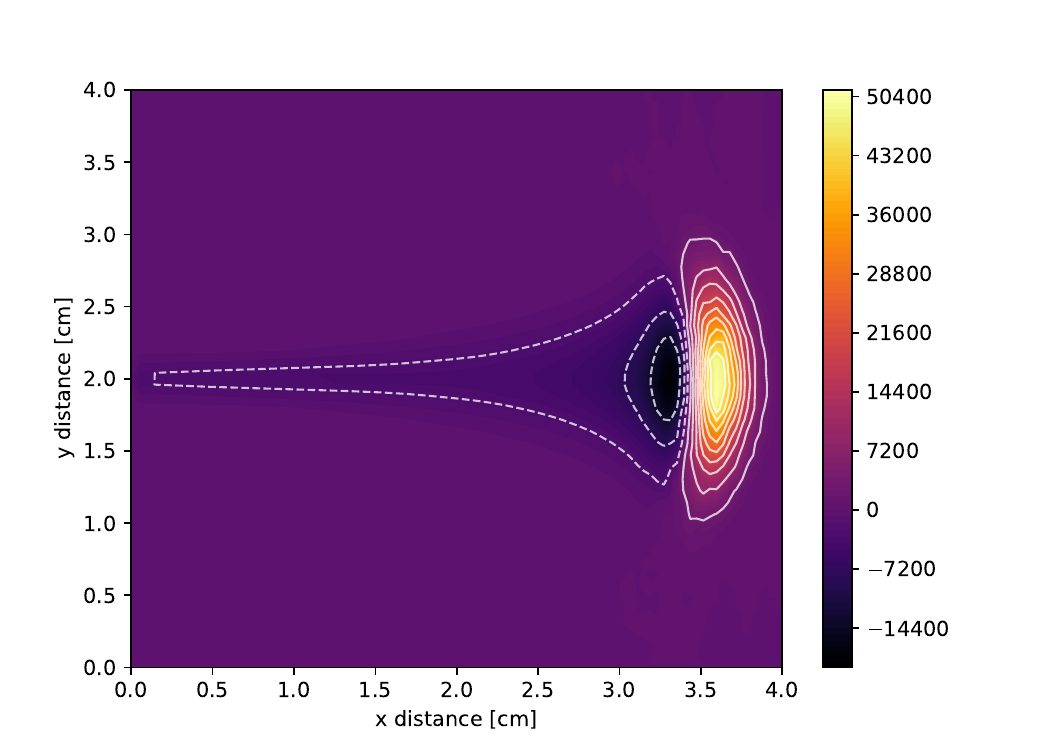}
      \caption{Pathwise sensitivity.}\label{fig:alpha-pathwise-full}
    \end{subfigure}
    \begin{subfigure}[b]{0.32\textwidth}
      \includegraphics[width=\textwidth]{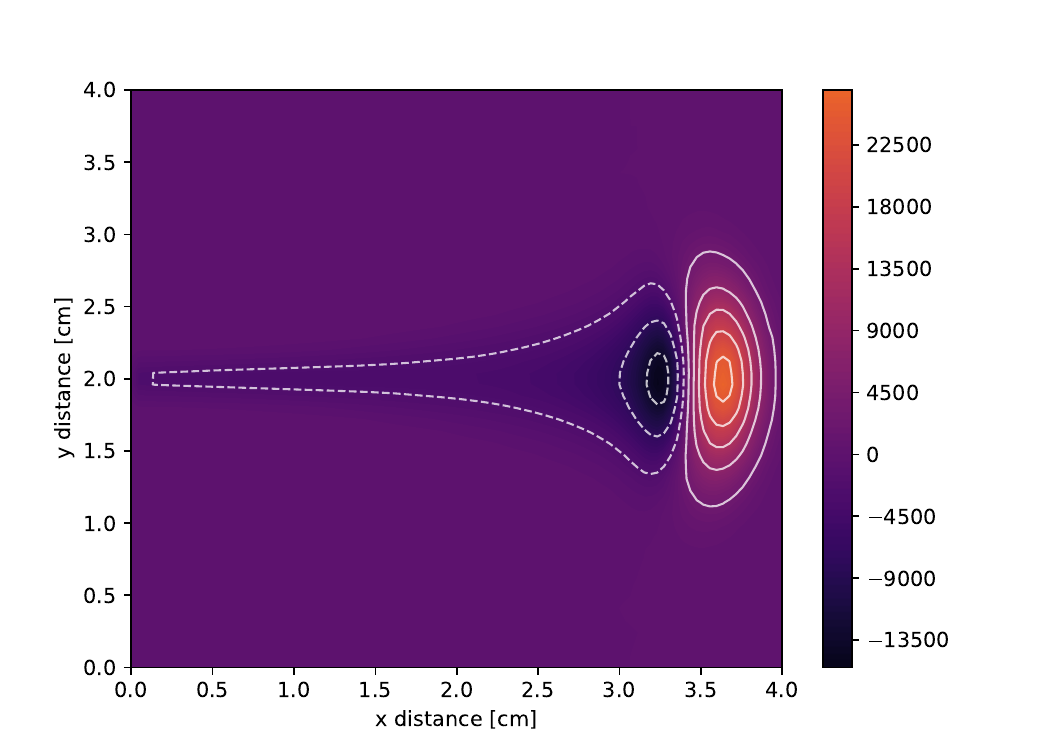}
      \caption{Finite difference sensitivity.}\label{fig:alpha-fd-full}
    \end{subfigure}
    \begin{subfigure}[b]{0.32\textwidth}
      \includegraphics[width=\textwidth]{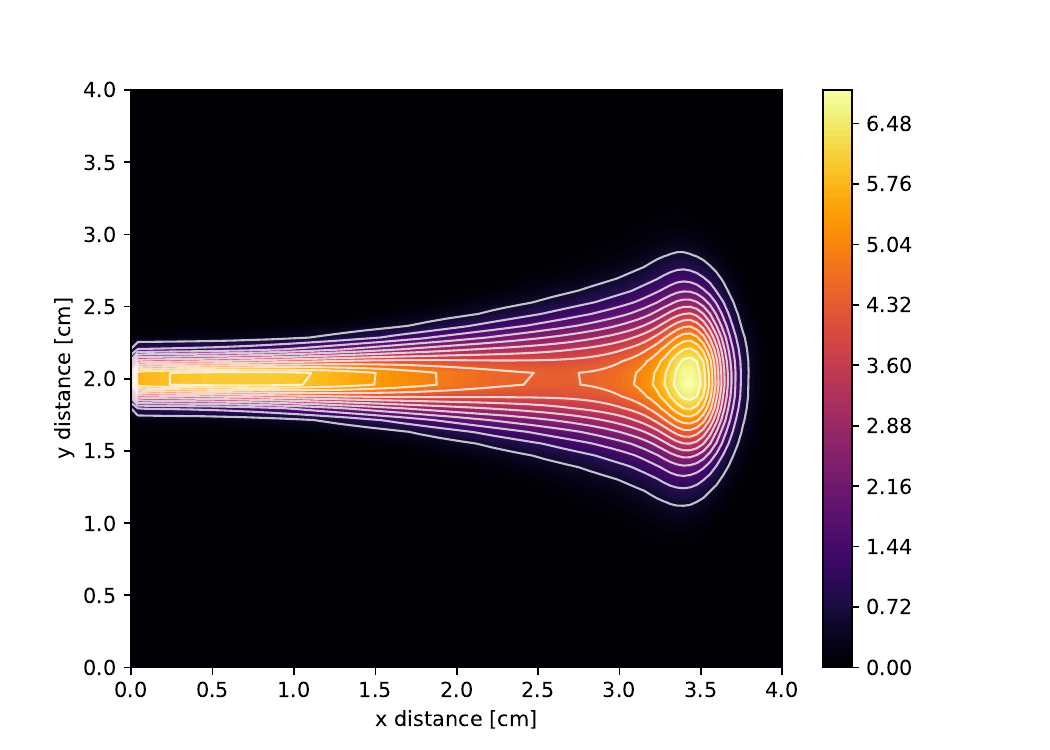}
      \caption{Reference dose.}
    \end{subfigure}
    \caption{Example \ref{ex:full-model-sensitivities}. Sensitivity
      with respect to $\alpha$. In the presence of energy straggling,
      the sensitivity profile is more spread out than in
      \autoref{fig:dose-sens-alpha-no-straggling}. The finite
      difference method still underestimates the gradient
      magnitude.}\label{fig:dose-sens-alpha}
\end{figure}

\begin{figure}[h!]
  \captionsetup[subfigure]{justification=centering}
  \centering
  \begin{subfigure}[b]{0.32\textwidth}
    \includegraphics[width=\textwidth]{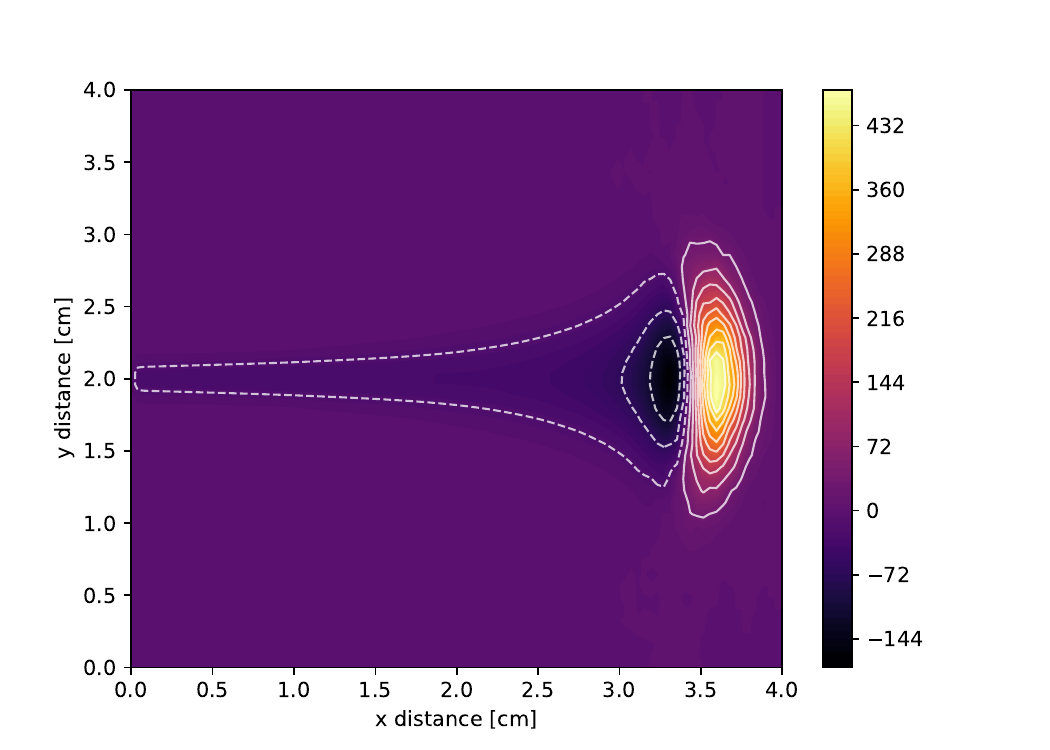}
    \caption{Pathwise sensitivity.}\label{fig:p-pathwise-full}
  \end{subfigure}
  \begin{subfigure}[b]{0.32\textwidth}
    \includegraphics[width=\textwidth]{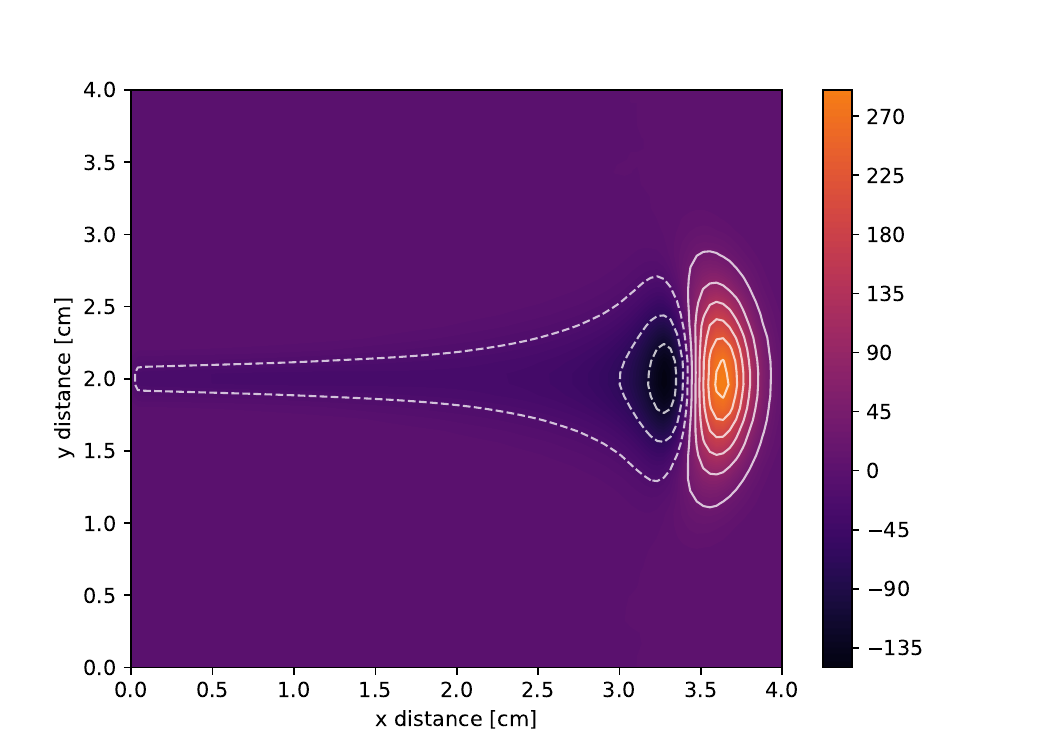}
    \caption{Finite difference sensitivity.}\label{fig:p-fd-full}
  \end{subfigure}
  \begin{subfigure}[b]{0.32\textwidth}
    \includegraphics[width=\textwidth]{plots/dose_sens/straggling/dose_ref.pdf}
    \caption{Reference dose.}
  \end{subfigure}
  \caption{Example \ref{ex:full-model-sensitivities}. Sensitivity with
    respect to $p$. As with $\alpha$, the finite difference estimator
    smooths and underestimates the gradient
    magnitude.}\label{fig:dose-sens-p}
\end{figure}

\begin{figure}[h!]
  \captionsetup[subfigure]{justification=centering}
  \centering
  \begin{subfigure}[b]{0.32\textwidth}
    \includegraphics[width=\textwidth]{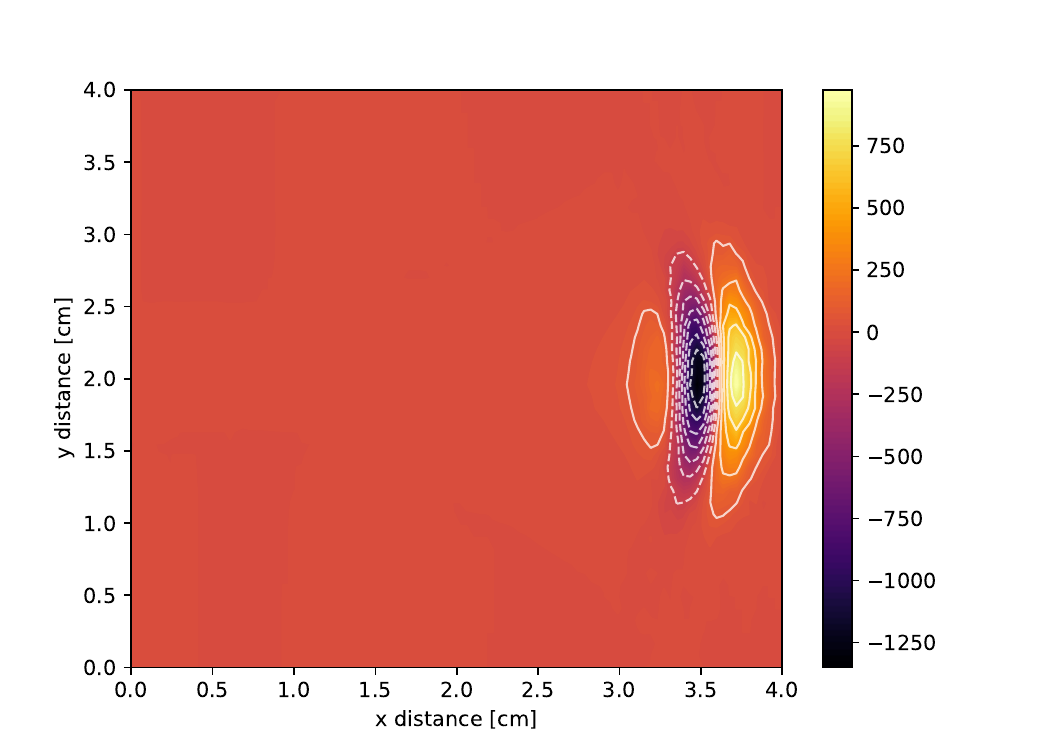}
    \caption{Pathwise sensitivity.}\label{fig:kappa-pathwise}
  \end{subfigure}
  \begin{subfigure}[b]{0.32\textwidth}
    \includegraphics[width=\textwidth]{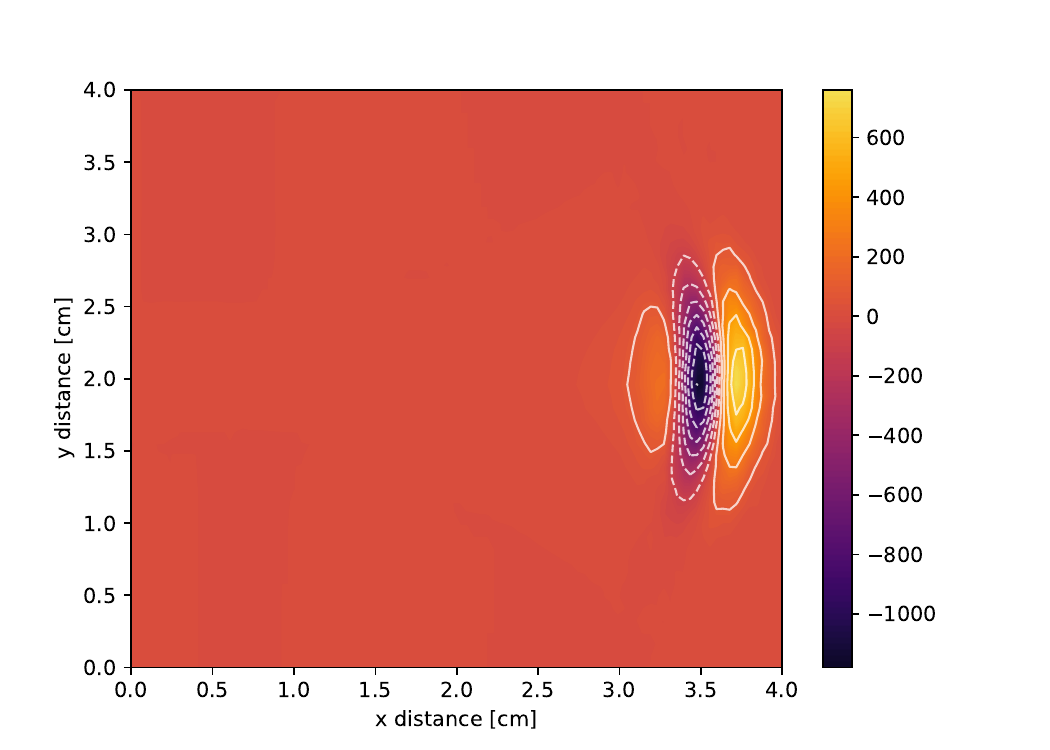}
    \caption{Finite difference sensitivity.}\label{fig:kappa-fd}
  \end{subfigure}
  \begin{subfigure}[b]{0.32\textwidth}
    \includegraphics[width=\textwidth]{plots/dose_sens/straggling/dose_ref.pdf}
    \caption{Reference dose.}
  \end{subfigure}
  \caption{Example \ref{ex:full-model-sensitivities}. Sensitivity with
    respect to $\kappa$. The pathwise method captures sharp features,
    while the finite difference method underestimates the gradient
    magnitude. The profile reflects the widening and lowering of the
    peak caused by straggling.}\label{fig:dose-sens-kappa}
\end{figure}
\end{example}

\begin{example}[Illustration of noise--bias tradeoff in the finite difference approach]\label{ex:noise-bias-fd}
  In Examples \ref{ex:no-straggling-model-sensitivities} and
  \ref{ex:full-model-sensitivities}, we saw evidence that the finite
  difference estimator underestimates dose sensitivities compared to
  the pathwise method. A natural idea to reduce bias is to decrease
  the parameter stepsize $\Delta \theta$. However, doing so increases
  the variance of the estimator \autoref{eq:fd-estimator}, amplifying
  Monte Carlo error.

  We illustrate this bias-variance tradeoff for the parameter
  $\alpha$. Model parameters are fixed as $\alpha=0.022$,
  $\kappa=0.001$, $\epsilon_0=0.005$, with initial conditions and 
  numerical parameters as in \autoref{sec:dose-sensitivity}.

  \autoref{fig:noise-bias-fd} shows sensitivity estimates with $\Delta
  \alpha$ varying over one order of magnitude. With $\Delta \alpha =
  0.1\alpha$ (\autoref{fig:dalpha-large}), the estimator has small
  variance but substantial bias, as seen from the underestimation in
  magnitude. With $\Delta \alpha = 0.0055\alpha$
  (\autoref{fig:dalpha-med}), the bias is smaller and the estimate
  closer to the pathwise benchmark. With $\Delta \alpha = 0.0001\alpha$
  (\autoref{fig:dalpha-small}), statistical noise dominates.

\begin{figure}[h!]
  \captionsetup[subfigure]{justification=centering}
  \centering
  \begin{subfigure}[b]{0.32\textwidth}
    \includegraphics[width=\textwidth]{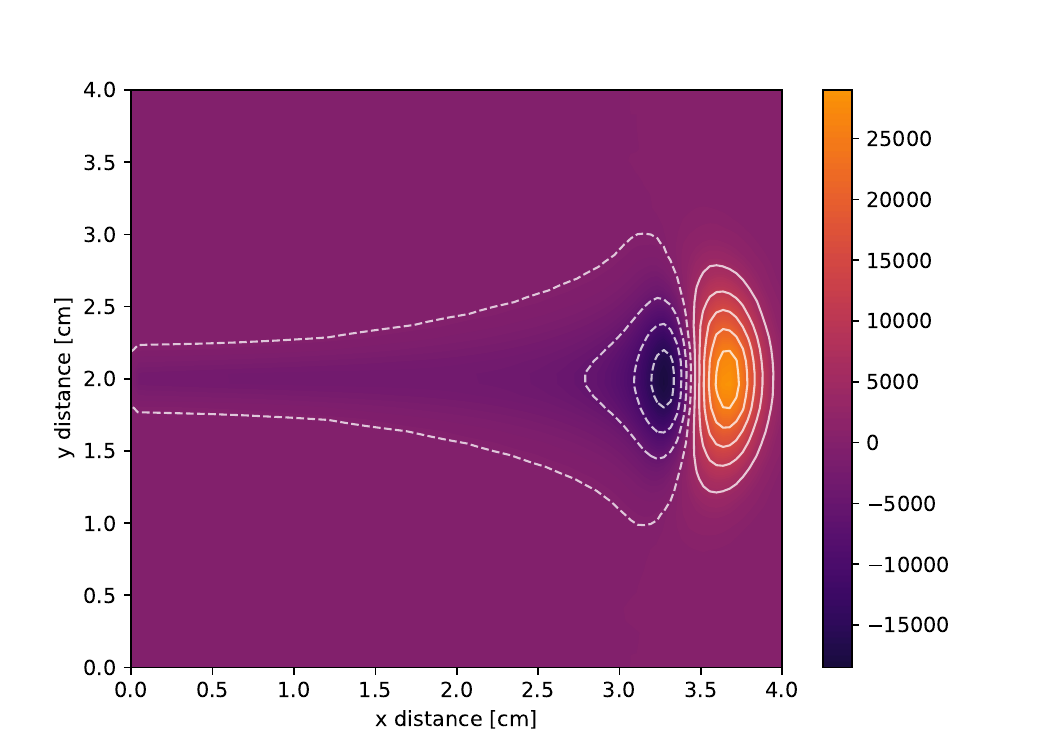}
    \caption{$\Delta \alpha = 0.1 \alpha$}\label{fig:dalpha-large}
  \end{subfigure}
  \begin{subfigure}[b]{0.32\textwidth}
    \includegraphics[width=\textwidth]{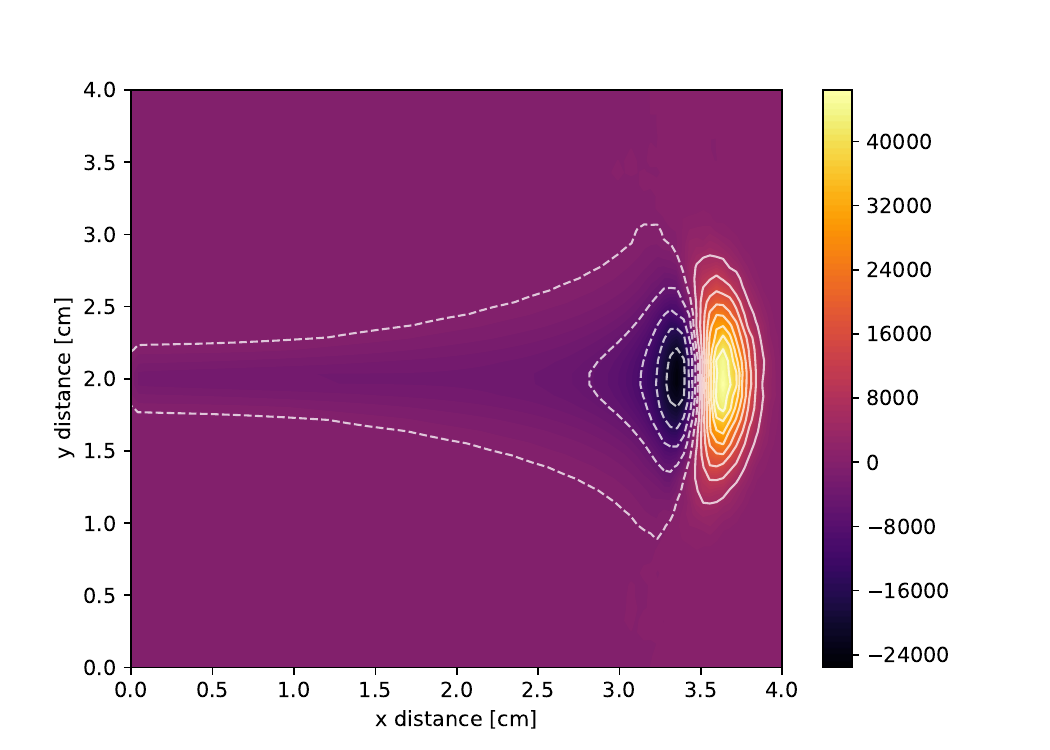}
    \caption{$\Delta \alpha = 0.0055 \alpha$}\label{fig:dalpha-med}
  \end{subfigure}
  \begin{subfigure}[b]{0.32\textwidth}
    \includegraphics[width=\textwidth]{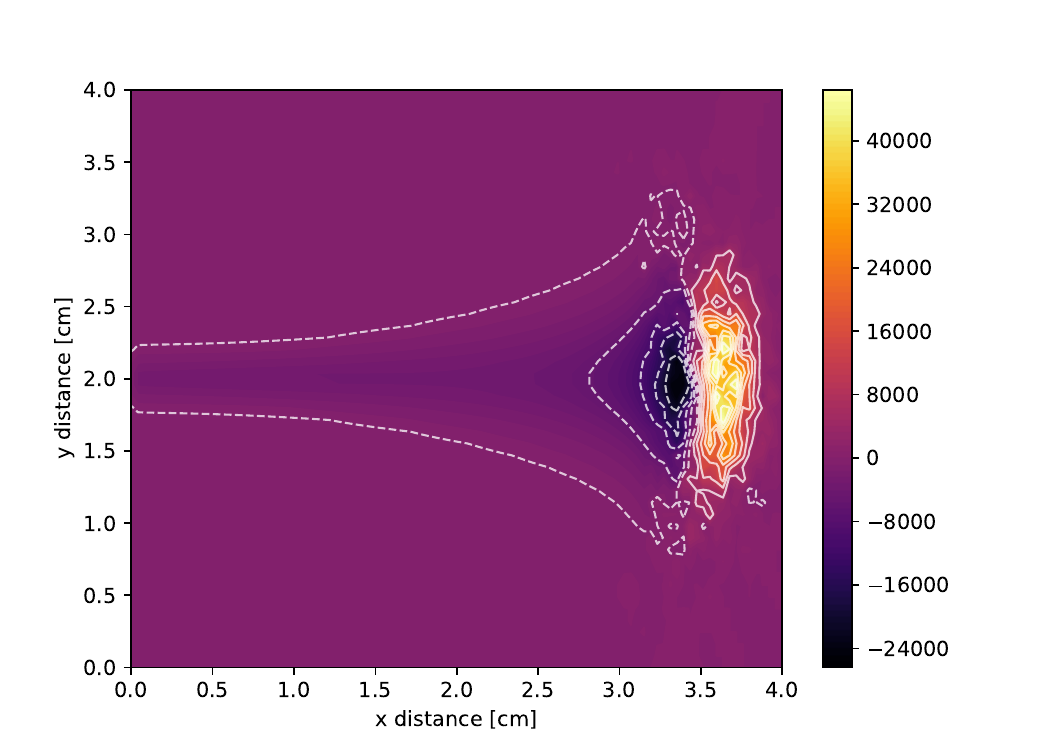}
    \caption{$\Delta \alpha = 0.0001 \alpha$}\label{fig:dalpha-small}
  \end{subfigure}
  \caption{Example \ref{ex:noise-bias-fd}: Bias--variance tradeoff in
    finite difference sensitivities. Larger stepsizes yield low
    variance but large bias. Smaller stepsizes reduce bias but
    increase variance.}
  \label{fig:noise-bias-fd}
\end{figure}

\begin{remark}[Bias-variance tradeoff]
  The finite difference estimator involves an intrinsic
  tradeoff. Decreasing the stepsize $\Delta \theta$ reduces bias but
  increases variance, requiring more Monte Carlo samples to control
  noise. For the central difference estimator, the optimal balance
  between $\Delta \theta$ and sample size $N$ yields a slow asymptotic
  rate of $\mathcal{O}(N^{-2/5})$ (see table~7.1 of
  \cite{glasserman2004monte}, and also
  \cite{glynn1989optimization,fox1989replication}). This is
  significantly worse than the $\mathcal{O}(N^{-1/2})$ variance
  reduction achievable with the pathwise method.
\end{remark}
\end{example}

\section{Conclusion}
\label{sec:conclusion}

We have developed a structure-preserving framework for stochastic
proton transport with continuous energy loss, range straggling and
angular diffusion. The proposed discretisations preserve the key
constraints of the model by maintaining positivity of the energy
variable and the spherical geometry of the angular component, while
remaining compatible with standard strong approximations for SDEs
through a logarithmic transformation for energy and exponential-map
updates for direction.

For the state dynamics, the framework admits a Milstein extension with
strong order-$1$ convergence in the mean-square sense. We also derived
coupled pathwise sensitivity equations with respect to the
Bragg--Kleeman parameters and the straggling coefficient, and showed
how these sensitivities may be combined with regularised observables
to obtain gradient estimators for dose-related quantities. The
numerical experiments demonstrate that the resulting pathwise
estimators capture sharper sensitivity profiles and exhibit more
favourable empirical behaviour than finite-difference benchmarks.

Future work will focus on extending the framework to include discrete
inelastic events, heterogeneous media and more general classes of
observables. It would also be of interest to develop weak-approximation
and variance-reduction methodologies for expectation-level quantities,
and to integrate the present pathwise framework into optimisation,
uncertainty quantification and Bayesian inference pipelines.

\section*{Acknowledgments}

The Python code that reproduces all numerical experiments and plots in
this paper can be found at \url{10.5281/zenodo.17053272}.

VC is supported by a scholarship from the Statistical Applied
Mathematics at Bath (SAMBa) EPSRC Centre for Doctoral Training (CDT)
at the University of Bath under the project EP/S022945/1. TP is
supported by the EPSRC programme grant Mathematics of Radiation
Transport (MaThRad) EP/W026899/2 and the Leverhulme Trust
RPG-2021-238. All of this support is gratefully acknowledged.

\printbibliography

\end{document}